\theoremstyle{plain}
\newtheorem{thm}{Theorem}[section]
\newtheorem{cor}[thm]{Corollary}
\newtheorem{prop}[thm]{Proposition}
\newtheorem{conj}[thm]{Conjecture}
\theoremstyle{definition}
\newtheorem{defn}[thm]{Definition}
\newtheorem{remark}[thm]{Remark}
    \newcommand{\FH}{{\mathbf{H}}} \newcommand{\FG}{{\mathbf{G}}}
    \newcommand{\FT}{{\mathbf{T}}}
    \newcommand{\FP}{{\mathbf{P}}}
    \renewcommand{\FT}{{\mathbf{T}}}\newcommand{\FM}{{\mathbf{M}}}
    \newcommand{\FN}{{\mathbf{N}}}\newcommand{\FA}{{\mathbf{A}}}
    \newcommand{\FZ}{{\mathbf{Z}}}
    \newcommand{\FS}{{\mathbf{S}}}
    \newcommand{\FL}{{\mathbf{L}}}\newcommand{\FX}{{\mathbf{X}}}
    \renewcommand{\1}{{\mathbbold{1}}}
    \newcommand{\sC}{{\mathscr{C}}}
    \newcommand{\BA}{{\mathbb {A}}} 
    \newcommand{\BC}{{\mathbb {C}}} \newcommand{\BD}{{\mathbb {D}}}
    \newcommand{\BG}{{\mathbb {G}}} \newcommand{\BH}{{\mathbb {H}}}
     \newcommand{\BN}{{\mathbb {N}}}
     \newcommand{\BR}{{\mathbb {R}}}
    \newcommand{\CC}{{\mathcal {C}}} \newcommand{\CD}{{\mathcal {D}}}
     \newcommand{\CN}{{\mathcal {N}}}
    \newcommand{\CO}{{\mathcal {O}}} 
    \newcommand{\CS}{{\mathcal {S}}}
    \newcommand{\gl}{{\mathfrak{gl}}}
    \newcommand{\fc}{{\mathfrak{c}}} 
    \newcommand{\fg}{{\mathfrak{g}}} \newcommand{\fh}{{\mathfrak{h}}}
     \newcommand{\fl}{{\mathfrak{l}}}
    \newcommand{\fm}{{\mathfrak{m}}} 
     \newcommand{\fr}{{\mathfrak{r}}}
    \newcommand{\fs}{{\mathfrak{s}}} \newcommand{\ft}{{\mathfrak{t}}}
    \newcommand{\B}{{\mathrm{B}}}
    \newcommand{\ad}{{\mathrm{ad}}}
    \newcommand{\Ad}{{\mathrm{Ad}}}\newcommand{\A}{{\mathrm{A}}}
    \renewcommand{\d}{{\mathrm{d}}}
    \newcommand{\End}{{\mathrm{End}}} 
    \newcommand{\el}{{\mathrm{ell}}}
    \newcommand{\f}{{\mathrm{f}}}
    \newcommand{\GSp}{{\mathrm{GSp}}}
    \newcommand{\Gal}{{\mathrm{Gal}}} 
    \newcommand{\GL}{{\mathrm{GL}}}
    \newcommand{\Hom}{{\mathrm{Hom}}}
    \newcommand{\Lie}{{\mathrm{Lie}}}
    \newcommand{\M}{{\mathrm{M}}}
    \renewcommand{\P}{{\mathrm{P}}}
    \newcommand{\pr}{{\mathrm{pr}}}
     \newcommand{\R}{{\mathrm{R}}}
    \newcommand{\reg}{{\mathrm{reg}}}
    \newcommand{\res}{{\operatorname{res}}}
    \newcommand{\rs}{{\mathrm{rs}}}
    \renewcommand{\ss}{{\mathrm{ss}}}
    \newcommand{\SL}{{\mathrm{SL}}}
    \newcommand{\Spec}{{\mathrm{Spec}}}
    \newcommand{\Supp}{{\mathrm{Supp}}}
    \newcommand{\tr}{{\mathrm{tr}}}
    \newcommand{\RTr}{{\mathrm{Tr}}}
    \newcommand{\vol}{{\mathrm{vol}}}
    \newcommand{\wt}{\widetilde}
    \newcommand{\wh}{\widehat}
    \newcommand{\pair}[1]{\langle {#1} \rangle}
    \newcommand{\incl}{\hookrightarrow}
    \newcommand{\sk}{\medskip}
    \newcommand{\lra}{\longrightarrow}
    \newcommand{\ra}{\rightarrow} 
    \newcommand{\bs}{\backslash}
    \newcommand{\s}{\sk\noindent}
    \newcommand{\abs}[1]{\lvert#1\rvert}
\begin{document}

\title{On linear periods}

\date{}
\maketitle

\begin{abstract}
Let $\pi'$ be a cuspidal automorphic representation of
$\GL_{2n}(\BA)$, which is assumed to be the Jacquet-Langlands
transfer from a cuspidal automorphic representation $\pi$ of
$\GL_{2m}(D)(\BA)$, where $D$ is a division algebra so that
$\GL_{2m}(D)$ is an inner form of $\GL_{2n}$. In this paper, we
consider the relation between linear periods on $\pi$ and $\pi'$. We
conjecture that the non-vanishing of the linear period on $\pi$
would imply the non-vanishing of that on $\pi'$. We illustrate an
approach using a relative trace formula towards this conjecture, and
prove the existence of smooth transfer over non-archimedean local
fields.

\end{abstract}

\section{Introduction}\label{section. intro}

\paragraph{Goal of this article}
Let $k$ be a number field, $\BA$ its ring of adeles, and $D$ a
central division algebra over $k$ of index $d$, that is,
$\dim_kD=d^2$. Let $\FG=\GL_{2m}(D)$, viewed as an algebraic group
over $k$, which is an inner form of $\FG'=\GL_{2n}$ with $n=md$. Let
$\pi$ be an irreducible cuspidal automorphic representation of
$\FG(\BA)$, and $\pi'$ the irreducible automorphic representation of
$\FG'(\BA)$ associated to $\pi$ by the Jacquet-Langlands
correspondence, which is assumed to be cuspidal. For the
Jacquet-Langlands correspondence involving general linear group and
its inner forms, we refer to \cite{dkv}, \cite{ba} and \cite{br} for
more details. The main purpose of this paper is to investigate a
relation between certain automorphic periods under the
Jacquet-Langlands correspondence.

To be more precise, let $\FZ$ be the center of $\FG$, which is
identified with the center $\FZ'$ of $\FG'$ via the obvious
identifications of $\FZ$ and $\FZ'$ with $\BG_m$ over $k$. Let
$\FH=\GL_m(D)\times\GL_m(D)$ (resp. $\FH'=\GL_n\times\GL_n$) be
embedded into $\FG$ (resp. $\FG'$) diagonally. The periods
considered in this paper are given by
$$\ell(\phi):=\int_{\FH(k)\FZ(\BA)\bs\FH(\BA)}\phi(h)\ \d h,
\quad \phi\in\pi,$$ and
$$\ell'(\varphi):=\int_{\FH'(k)\FZ'(\BA)\bs\FH'(\BA)}\varphi(h)\ \d h,
\quad \varphi\in\pi'.$$ We call them {\em linear periods}. In the
context of general linear groups (hence applying to $(\FG',\FH')$
above), this notion was introduced by \cite{fj}. We say that $\pi$
is $\FH$-distinguished or has a linear period if $\ell|_\pi\neq0$.
Of course, as a special case we get an analogous definition in the
context of $(\FG',\FH')$. Conjecturally, such a period has a close
relation with an L-value. For instance, it was shown in \cite{fj}
that $\pi'$ is $\FH'$-distinguished if and only if the L-value
$L^S(\frac{1}{2},\pi')\res_{s=1}L^S(s,\pi',\wedge^2)$ is nonzero,
using an integral representation of the L-function
$L^S(s_1,\pi')\cdot L^S(s_2,\pi',\wedge^2)$. What happens if $\pi$
is $\FH$-distinguished? The partial L-functions attached to $\pi$
and $\pi'$ should be the same, while there is no integral
representation for the ones associated to $\pi$. However, since
$\FH$ is an inner form of $\FH'$, it is natural to make the
following conjecture.

\begin{conj}\label{conj. main}
If $\pi$ is $\FH$-distinguished, then $\pi'$ is
$\FH'$-distinguished.
\end{conj}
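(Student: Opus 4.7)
The plan is to compare two relative trace formulas, one for the symmetric pair $(\FG, \FH)$ and one for $(\FG', \FH')$. For a test function $f \in C_c^\infty(\FG(\BA))$ with fixed central character, form the distribution
$$J(f) \;=\; \int_{(\FH(k)\FZ(\BA)\bs \FH(\BA))^2} K_f(h_1, h_2)\, \d h_1\, \d h_2,$$
where $K_f$ is the usual kernel on $\FG(k)\bs \FG(\BA)$, and define $J'(f')$ analogously on $\FG'$. Formally expanding spectrally, the cuspidal contribution of $\pi$ to $J(f)$ is
$$\sum_\phi \ell(\pi(f)\phi)\,\overline{\ell(\phi)},$$
summed over an orthonormal basis of $\pi$, so $\pi$ being $\FH$-distinguished is equivalent to the existence of $f$ for which this contribution is nonzero. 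The overall goal is to exhibit, for each such $f$, a matching $f'$ on $\FG'(\BA)$ with $J(f) = J'(f')$, and to isolate the contribution of $\pi'$ on the spectral side of $J'$ using the Jacquet-Langlands correspondence.

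The first step is to develop the invariant theory for the $\FH\times\FH$-action on $\FG$ by $(h_1,h_2)\cdot g = h_1^{-1}g h_2$, identify the regular semisimple locus, and construct an orbit-to-orbit matching with the parallel action of $\FH'\times\FH'$ on $\FG'$, using a common invariant map built from characteristic polynomials of off-diagonal blocks. The second step, purely local, is smooth transfer: for each place $v$, show that every $f_v\in C_c^\infty(\FG(k_v))$ admits a transfer $f_v'\in C_c^\infty(\FG'(k_v))$ whose regular semisimple orbital integrals match those of $f_v$ under the orbit correspondence, and vice versa. At non-archimedean places this is the content of the main local theorem of the present paper. The third step is the fundamental lemma at almost all unramified places, matching spherical Hecke algebras so that the global transfer can be assembled from local data. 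The fourth, global, step is the comparison $J(f) = J'(f')$ itself, which beyond matching regular terms requires analyzing the non-regular (singular) orbital contributions and proving absolute convergence, typically via a delicate truncation in the style of Jacquet-Rallis.

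Once the global identity is in place, the final step is spectral isolation: by choosing $f_v$ at auxiliary split places to be a matrix coefficient type function isolating $\pi$, and using that the Jacquet-Langlands correspondence matches Satake parameters at unramified places (so that the same local choices single out $\pi'$ on the $\FG'$-side), one extracts an identity between the $\pi$-contribution to $J(f)$ and the $\pi'$-contribution to $J'(f')$ up to a product of nonzero local factors; non-vanishing of $\ell$ on $\pi$ then forces non-vanishing of $\ell'$ on $\pi'$. The hardest step will be the global comparison, in particular the analysis of the singular terms and the required truncation, together with the archimedean smooth transfer that lies outside the scope of this paper. The non-archimedean smooth transfer established here provides the first essential ingredient of this program.
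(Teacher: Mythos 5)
The statement you are asked to prove is, in the paper, only a conjecture (Conjecture \ref{conj. main}): the paper does not prove it, and your proposal does not prove it either. What you have written is a restatement of the relative trace formula program that the paper itself sketches in \S\ref{section. trace formula}, not an argument. Every step that would actually carry the logical weight is left as a declaration of intent: absolute convergence of $I(f)$ and $J(f')$ (which are only partially defined distributions), the treatment of the non-regular and singular orbital terms together with whatever truncation is needed for the geometric comparison, the archimedean smooth transfer, and the spectral isolation of $\pi$ and $\pi'$ including the factorization of the global period into local functionals --- which requires the multiplicity-one (or at least the weak Gelfand-pair) input of \S\ref{section. multiplicity one} that you never invoke. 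You concede that the ``hardest step'' lies outside the scope of the paper; that concession is exactly the admission that the conjecture remains open after your outline, so the proposal has a genuine gap rather than being a proof by a different route.

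Two further inaccuracies in the outline itself. First, you list a fundamental lemma at unramified places as a separate step to be proved; in this setting it is vacuous, since $(\FG(k_v),\FH(k_v))\simeq(\FG'(k_v),\FH'(k_v))$ for almost all $v$, as the paper points out. Second, you assert smooth transfer ``and vice versa.'' The orbit matching of Proposition \ref{prop. match orbits} is only an injection from $(\FG,\FH)$-orbits into $(\FG',\FH')$-orbits, and the paper explicitly states (Remark \ref{rem. converse smooth transfer} and the remark following Theorem \ref{thm. fourier}) that the converse direction of transfer is not known even locally; it cannot be taken for granted in the comparison. Where your outline is accurate, it coincides with the paper's own proposed approach (kernel distributions, orbit matching via characteristic polynomials, local smooth transfer as in Theorem \ref{thm. main1}); what is missing is everything beyond that, which is precisely why the statement is labeled a conjecture.
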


\begin{remark}
As pointed out by D. Prasad, the converse of the above conjecture
should also hold. In other words, if $\pi'$ is $\FH'$-distinguished,
then $\pi$ should be $\FH$-distinguished too. Moreover, Conjecture 2
of \cite{pt} may be viewed as the local analog of Conjecture
\ref{conj. main} together with its converse.
\end{remark}

In this paper, we illustrate an approach towards this conjecture
using a relative trace formula. One of the key steps in this
approach is establishing the existence of smooth transfer over the
non-archimedean places. This is accomplished by Theorem \ref{thm.
main1}. Note that there is no need to prove the fundamental lemma,
since $(\FG(k_v),\FH(k_v))\simeq(\FG'(k_v),\FH'(k_v))$ for almost
all places $v$. Roughly speaking, let $v$ be a finite place of $k$.
Then the smooth transfer at $v$ is a ``transfer" $\lambda_v$ from
$\CC_c^\infty(\FG(k_v))$ to $\CC_c^\infty(\FG'(k_v))$ (in fact a map
from $\CC_c^\infty(\FG(k_v))$ to a suitable quotient of
$\CC_c^\infty(\FG'(k_v))$) such that for any
$f\in\CC_c^\infty(\FG(k_v))$, the orbital integrals of $f$ and
$\lambda_v(f)$ ``match''. Here the orbits are those of
$\FH\times\FH$ on $\FG$ (resp., $\FH'\times\FH'$ on $\FG'$) by left
and right translation. See Section \ref{section. smooth transfer}
for a precise definition.

Our proof of the existence of smooth transfer is mainly inspired by
Wei Zhang's work \cite{zh1} on the smooth transfer conjecture for
the Jacquet-Rallis relative trace formula towards the global
Gan-Gross-Prasad conjecture for unitary groups, and by Waldspurger's
work \cite{wa95} \cite{wa97} on endoscopic transfer which inspired
\cite{zh1}. The first step is to reduce the question of the
existence of smooth transfer to Lie algebras, that is, to linearize
the question. The second step is to show that, roughly speaking, the
Fourier transform commutes with smooth transfer. We will use a
global method to show such a property.

\paragraph{Some related work}
Conjecture \ref{conj. main} is motivated by the conjecture of H.
Jacquet and K. Martin \cite{jm} on Shalika periods. We briefly
recall it. Now let $d=n$ and $\FG=\GL_2(D)$. Denote by $\FS$ the
Shalika subgroup of $\FG$. To review its definition, consider the
parabolic subgroup $\FP=\FM\FN$ of $\FG$, where $\FM\simeq
D^\times\times D^\times$ is the obvious Levi subgroup and $\FN\simeq
D$ is the unipotent radical. Let $\psi$ be a nontrivial character of
$\BA/k$, which defines a nondegenerate character (still denoted by
$\psi$) of $\FN(k)\bs\FN(\BA)$ given by $\psi(x):=\psi(\tr_D(x))$
for $x\in\FN(\BA)\simeq D(\BA)$, where $\tr_D$ is the reduced trace
map on $D$. Then its stabilizer in $\FP$ is the Shalika subgroup
$\FS=\FL\FN$, where $\FL$ is $\Delta D^\times$ (i.e., $D^\times$
embedded diagonally in $\FM\simeq D^\times\times D^\times$). We can
extend $\psi$ to a character of $\FS(k)\bs\FS(\BA)$ by $\psi(l\cdot
n)=\psi(n)$ for $l\in\FL(\BA)$ and $n\in\FN(\BA)$. One can define
the Shalika subgroup $\FS'$ of $\FG'$ similarly, where the
corresponding parabolic subgroup is $\FP'=\FM'\FN'$ with Levi factor
$\FM'\simeq\GL_n\times\GL_n$. Then the Shalika period $\CS$ is a
linear form on $\pi$ given by
$$\CS(\phi)=\int_{\FS(k)\bs\FS(\BA)}\phi(u)\psi^{-1}(u)\ \d u,$$ and
the Shalika period $\CS'$ on $\pi'$ is defined similarly. In
\cite{jm}, Jacquet and Martin conjectured that if $\pi$ is
distinguished with respect to $\CS$ then $\pi'$ is also
distinguished with respect to $\CS'$. Under some hypotheses, using
relative trace formulae, Jacquet and Martin showed that this is true
if $n=2$. However, they did not prove the smooth transfer for the
full space $\CC_c^\infty(\FG(k_v))$ of Bruhat-Schwartz functions. Of
course, if one aims to completely prove this conjecture using the
method of the relative trace formula, one has to show the existence
of smooth transfer for the full space $\CC_c^\infty(\FG(k_v))$. In
the case $n=2$, this conjecture (together with its converse) was
completely proved by W. T. Gan and S. Takeda \cite{gt} using theta
correspondence. However, this method cannot be generalized to the
higher rank cases. Separately, D. Jiang, C. Nien and Y. Qin
\cite{jnq} proved this conjecture, under some conditions, for
general $n$ using the method of automorphic descent.

There is a relation between the linear period and the Shalika period
on $\pi'$. In fact, by the criterion for $\FH'$-distinction from
\cite{fj} recalled earlier, $\FH'$-distinction implies
$\FS'$-distinction, since $\pi'$ is $\FS'$-distinguished if and only
if the exterior L-function $L(s,\wedge^2,\pi')$ has a simple pole at
$s=1$. Locally, it was shown in \cite{jr} that if $\pi'_v$ is
$\FS'(k_v)$-distinguished then it was $\FH'(k_v)$-distinguished, and
it is conjectured that if $\pi'_v$ is generic then
$\FS'(k_v)$-distinction is equivalent to $\FH'(k_v)$-distinction.
Recently, Gan \cite{ga} proved this local conjecture using local
theta correspondence for dual pairs of type II. Therefore one can
ask whether there are such relations between linear and Shalika
periods on $\pi$, both globally and locally. Such a conjectural
relation together with the conjecture of Jacquet and Martin
motivates Conjecture \ref{conj. main}.

As we have said before, our proof of the existence of smooth
transfer is inspired by \cite{zh1} and \cite{wa97}. However, there
are still some significant differences between our method and that
of either of \cite{zh1} or \cite{wa97}. It is fair to say that ours
is a combination of theirs. We follow \cite{zh1} in reducing the
question of smooth transfer at the level of groups to showing
Theorem \ref{thm. fourier}, namely, the assertion that the Fourier
transform commutes with smooth transfer (up to an explicit
constant). However, we could not follow \cite{zh1} for the rest of
the proof, since the absence of a suitable partial Fourier transform
in our situation meant that the inductive arguments in
\cite[\S4]{zh1} could not be applied. We follow \cite{wa97} in using
a global method to prove Theorem \ref{thm. fourier}. This requires
us to study harmonic analysis on the corresponding $p$-adic
symmetric spaces, and prove several results analogous to ones
appearing in \cite{wa95} and \cite{wa97}, and others that are
analogues of more classical results in \cite{hc1} and \cite{hc}. We
just state these results and explain them briefly, since they are
direct generalizations of those that have been proved in the case of
$(\FG',\FH')$ in \cite{zh}.

In \cite{zh}, we studied the relation between similar periods
(involving an additional twist with a character) for the symmetric
pairs $(\FG',\FH')$ and $(\FG,\FH)$, where $(\FG',\FH')$ is as
before and $(\FG,\FH)=(\GL_n(D),\GL_n(k')$ with $D$ being a
quaternion algebra over $k$ and $k'$ being a quadratic field
extension of $k$ included in $D$. However, in \cite{zh}, we could
prove only ``half'' of the property that the Fourier transform
commutes with smooth transfer, due to the fact that there are
``fewer'' regular semisimple orbits associated to the pair
$(\FG,\FH)$ than to $(\FG',\FH')$. We encounter a similar problem in
this paper, though, fortunately, it turns out that this hurdle can
be circumvented. The point is that we have a nice description for
the orbits of $(\FG',\FH')$ that can be matched with ones of
$(\FG,\FH)$.

Sometimes the existence of smooth transfer for functions belonging
to a proper subspace of $\CC_c^\infty(\FG(k_v))$ suffices to prove
partial results towards Conjecture \ref{conj. main}. This is the
case, for instance, in the work of Jacquet-Martin \cite{jm}.

\paragraph{Structure of this article}
In \S\ref{section. trace formula}, we introduce the relative trace
formulae considered in this paper, which are natural for the
conjecture concerned. The contents of this section are more or less
routine and informal. The main purpose of this section is to show
the motivation for the study of smooth transfer.

To factor the global linear periods into local ones, we need to
study the property of multiplicity one for the symmetric pair
$(\FG(k_v),\FH(k_v))$ at each place $v$ of $k$, or, in other words,
to study the space $\Hom_{\FH(k_v)}(\pi_v,\BC)$ for any irreducible
admissible representation $\pi_v$ of $\FG(k_v)$. If
$\dim\Hom_{\FH(k_v)}(\pi_v,\BC)\leq 1$ for each irreducible
admissible representation $\pi_v$ of $\FG(k_v)$ we call
$(\FG(k_v),\FH(k_v))$ a Gelfand pair. We have not been able to show
$(\FG(k_v),\FH(k_v))$ to be a Gelfand pair, but we can show that it
satisfies a weaker variant of that property, which is enough for our
purpose of factoring the global period. In \S\ref{section.
multiplicity one}, we systematically follow the approach developed
by \cite{ag1} to study questions of this kind, i.e. using
generalized Harish-Chandra descent to study
$\FH(k_v)\times\FH(k_v)$-invariant distributions on $\FG(k_v)$. This
will also be important further into the paper (\S\ref{section.
smooth transfer} and \S\ref{section. local orbital integrals}),
while studying smooth transfer.

In \S\ref{section. smooth transfer}, we introduce the notion of
smooth transfer explicitly, both for groups and Lie algebras. After
several reduction steps, we show the existence of smooth transfer
(Theorem \ref{thm. main1}) assuming Theorem \ref{thm. fourier} on
the commutativity of Fourier transform with transfer. Theorem
\ref{thm. fourier} is proved in \S\ref{section. local orbital
integrals}.

The main aim of \S\ref{section. local orbital integrals} is to prove
Theorem \ref{thm. fourier}. We first recall some results on $p$-adic
harmonic analysis developed in \cite{zh} and give their
generalizations to our situation. With the aid of these results, we
prove Theorem \ref{thm. fourier} at the end of this section.

\section{Notations and conventions}\label{section. notation}

\paragraph{Actions of algebraic groups}
Let $k$ be a number field or a $p$-adic field. Let $\pi$ be an
action of a reductive group $\FM$ on a smooth affine variety $\FX$,
all defined over $k$. Write $M=\FM(k)$ and $X=\FX(k)$. Recall that
for $x\in X$, we say that $x$ is $\FM$-semisimple or $M$-semisimple
if $\FM x$ is Zariski closed in $\FX$ (or, equivalently, if $k$ is
$p$-adic, $M x$ is closed in $X$ for the analytic topology). We say
$x$ is $\FM$-regular or $M$-regular if the stabilizer $\FM_x$ of $x$
has minimal dimension. We denote by $\FX_\rs(k)$ or $X_\rs$ (resp.
$\FX_\ss(k)$ or $X_\ss$) the set of $M$-regular semisimple (resp.
$M$-semisimple) elements in $X$. If $k$ is $p$-adic, we call an
algebraic automorphism $\tau$ of $\FX$ $\FM$-admissible if (i)
$\tau$ normalizes $\pi(M)$ and $\tau^2\in\pi(M)$, (ii) for any
closed $M$-orbit $O\subset X$, $\tau(O)=O$.

\paragraph{Analysis on $\ell$-spaces} Now let $k$ be a $p$-adic field.
For a locally compact totally disconnected topological space $X$, we
denote by $\CC_c^\infty(X)$ the space of locally constant and
compactly supported $\BC$-valued functions on $X$, and by $\CD(X)$
the space of distributions on $X$, namely, the dual of
$\CC_c^\infty(X)$. If there is an action of an $\ell$-group $M$ on
$X$, we denote by $\CD(X)^M$ the subspace of $\CD(X)$ consisting of
$M$-invariant distributions.

\paragraph{Fourier transform and Weil index}
Now suppose that $k$ is a local field of characteristic 0. Let $X$
be a finite dimensional vector space over $k$ with the natural
topology induced from that of $k$, $\psi$ a nontrivial continuous
additive character of $k$, and $q$ a nondegenerate quadratic form on
$X$. We always equip $X$ with the self-dual Haar measure with
respect to the bi-character $\psi(q(\cdot,\cdot))$. Define the
Fourier transform $f\mapsto\wh{f}$ of the space $\CS(X)$ of
Bruhat-Schwartz functions on $X$ by
$$\wh{f}(x)=\int_X f(y)\psi(q(x,y))\ \d y.$$ Then
$\hat{\hat{f}}(x)=f(-x)$. We write $\gamma_\psi(q)$ for the Weil
index associated to $q$ and $\psi$, which is an 8th root of unity.
For the definition and some properties of the Weil index, see
\cite{we}.

\section{Relative trace formulae}\label{section. trace formula}
Let $(\FG,\FH)$ and $(\FG',\FH')$ be as defined in \S\ref{section.
intro}. Fix a Haar measure on $\FZ(\BA)$. For
$f\in\CC_c^\infty(\FG(\BA))$, define the kernel function
$$K_f(x,y)=\int_{\FZ(k)\bs\FZ(\BA)}\sum_{\gamma\in\FG(k)}
f(zx^{-1}\gamma y)\ \d z.$$ We consider the partially defined
distribution on $\FG(\BA)$
$$I(f)=\int_{\FH(k)\FZ(\BA)\bs\FH(\BA)}\int_{\FH(k)\FZ(\BA)\bs\FH(\BA)}
K_f(h_1,h_2)\ \d h_1\ \d h_2,$$ defined on the subspace of all
$f\in\CC_c^\infty(\FG(\BA))$ such that the above expression is
absolutely convergent. Choose the Haar measure on $\FZ'(\BA)$ to be
compatible with that on $\FZ(\BA)$. For
$f'\in\CC_c^\infty(\FG'(\BA))$ we define the kernel function
$K_{f'}(x,y)$ similarly. In the same way, we obtain a partially
defined distribution $J(\cdot)$ on $\FG'(\BA)$. The art of relative
trace formula is to compare $I(\cdot)$ with $J(\cdot)$.

Very informally, we have two ways to decompose $I(f)$ - the so
called spectral expansion and the so called geometric expansion. On
the spectral side, spherical characters $I_\pi(f)$ associated to
irreducible cuspidal representations $\pi$ of $\FG(\BA)$ are
involved. On the geometric side, orbital integrals $I_\gamma(f)$
associated to $\FH(k)\times\FH(k)$-regular semisimple orbits
$\gamma$ in $\FG(k)$ are involved. We give precise definitions of
$I_\pi(f)$ and $I_\gamma(f)$ below. Similarly, $J(f')$ can be
decomposed in these two ways.

Fix a Haar measure on $\FH(\BA)$. If $\pi$ is an irreducible
cuspidal representation of $\FG(\BA)$, let
$$K_{\pi,f}(x,y)=\sum_\varphi\left(\pi(f)\varphi\right)(x)\overline{\varphi(y)},$$
where $\varphi$ runs over an orthonormal basis for the space of
$\pi$. Define the spherical character $I_\pi$ to be
$$I_\pi(f)=\int_{\FH(k)\FZ(\BA)\bs\FH(\BA)}\int_{\FH(k)\FZ(\BA)\bs\FH(\BA)}
K_{\pi,f}(h_1,h_2)\ \d h_1\ \d h_2,$$ where
$f\in\CC_c^\infty(\FG(\BA))$. Both $K_{\pi,f}(x,y)$ and $I_\pi(f)$
are well defined, and we refer the reader to \cite[\S5]{hh} for a
detailed explanation. Thus, by definition, we have
$$I_\pi(f)=\sum_\varphi\ell\left(\pi(f)\varphi\right)\overline{\ell(\varphi)}.$$
Notice that $I_\pi$ is a distribution of positive type. In other
words, if $f=f_1*f_1^*$ where $f_1^*(g):=\bar{f}_1(g^{-1})$, then
$$I_\pi(f)=\sum_\varphi\ell\left(\pi(f_1)\varphi\right)
\overline{\ell(\pi(f_1)\varphi)}.$$ Hence $\pi$ is
$\FH$-distinguished if and only if $I_\pi$ is nonzero as a
distribution on $\FG(\BA)$. Therefore, the spectral expansion of
$I(f)$ in terms of $I_\pi(f)$ can reflect the property of
$\FH$-distinction. Similarly, we define the spherical character
$J_{\pi'}(f')$ associated to an irreducible cuspidal representation
$\pi'$ of $\FG'(\BA)$.

It is believed that, if we can compare the distributions $I$ and $J$
in some ways, $I_\pi$ and $J_{\pi'}$ are closely related, where
$\pi'$ is the Jacquet-Langlands correspondence of $\pi$. To compare
$I$ with $J$, we consider their geometric expansions.

If $\gamma\in\FG(k)$ is $\FH\times\FH$-regular semisimple, we fix a
Haar measure $\d_\gamma h$ on $\FH_\gamma(\BA)$, where
$$\FH_\gamma=\{ (h_1,h_2)\in\FH\times\FH;\ h_1\gamma h_2^{-1}=\gamma\}$$
is the stabilizer of $\gamma$ under the action of $\FH\times\FH$.
For $f\in\CC_c^\infty(\FG(\BA))$, define the orbital integral of $f$
at $\gamma$ to be
$$I_\gamma(f)=\int_{\FH_\gamma(\BA)\bs(\FH(\BA)\times\FH(\BA))}
f(h_1^{-1}\gamma h_2)\ \d h_1\ \d h_2.$$ This is well defined, since
the semisimple orbit is closed in $\FG(\BA)$ and therefore the above
integral is absolutely convergent. We fix a Haar measure $\d h_v$ on
$\FH(k_v)$ at each place $v$ of $k$ so that $\vol(\FH(\CO_{k_v}))=1$
for each unramified place $v$ and set $\d h=\prod_{v}\d h_v$. We
also fix a Haar measure $\d_\gamma h_v$ on $\FH_\gamma(k_v)$ at each
place $v$ of $k$ so that $\vol(\FH_\gamma(\CO_{k_v}))=1$ for each
unramified place $v$ and set $\d_\gamma h=\prod_v\d_\gamma h_v$. If
$f=\otimes'f_v$ is a pure tensor, we have
\begin{equation}\label{equation. global and local integrals 1}
I_\gamma(f)=\prod_v\int_{\FH_\gamma(k_v)\bs(\FH(k_v)\times\FH(k_v))}
f_v(h_1^{-1}\gamma h_2)\d h_1\ \d h_2,\end{equation} since the
integrals in the product are absolutely convergent and equal to 1 at
almost all places (cf. \cite[Proposition 12.21]{ge}). For
$f_v\in\CC_c^\infty(\FG(k_v))$, set
$$O_\gamma(f_v)=\int_{\FH_\gamma(k_v)\bs(\FH(k_v)\times\FH(k_v))}
f_v(h_1^{-1}\gamma h_2)\d h_1\ \d h_2.$$ Of course, the discussions
above contain the case $(\FG',\FH')$. We define $J_\delta(f')$ for
$\FH'\times\FH'$-regular semisimple elements $\delta\in\FG'(k)$ and
$f'\in\CC_c^\infty(\FG'(\BA))$, and define $O_\delta(f'_v)$ for
$f'_v\in\CC_c^\infty(\FG'(k_v))$ in the same way. Thus we have the
relation
\begin{equation}\label{equation. global and local integrals 2}
J_\delta(f')=\prod_v O_\delta(f'_v)\end{equation} if
$f'=\otimes'f'_v$.

From now on, when we say ``regular semisimple'', we mean
``$\FH\times\FH$-regular semisimple'' or ``$\FH'\times\FH'$-regular
semisimple'' if there is no confusion. If $\gamma\in\FG(k)$ is
regular semisimple, there exists a regular semisimple
$\delta\in\FG'(k)$ matching $\gamma$ (see Proposition \ref{prop.
match orbits} for more details). It turns out that $\FH_\gamma$ is
isomorphic to $\FH'_\delta$ (see Remark \ref{matching stabilizer}).
We equip $\FH'_\delta(\BA)$ with the Haar measure compatible with
that on $\FH_\gamma(\BA)$. To compare the regular parts of the
distributions $I$ with $J$ on the geometric side, we need to show
the following conjecture on smooth transfer.

\begin{conj}\label{conj. global smooth transfer}
For each $f$ in $\CC_c^\infty(\FG(\BA))$ there exists $f'$ in
$\CC_c^\infty(\FG'(\BA))$ such that for each $\delta\in\FG'(k)_\rs$
$$\begin{array}{lll}J_{\delta}(f')=\left\{\begin{array}{ll}
\begin{aligned}I_\gamma(f),&\quad \mathrm{if\ there\ exists\ }
\gamma\in \FG(k)_\rs\ \mathrm{ matching\ }\delta,\\
0,&\quad \mathrm{otherwise}.
\end{aligned}
\end{array}\right.
\end{array}$$
\end{conj}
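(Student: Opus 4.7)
The plan is to reduce Conjecture \ref{conj. global smooth transfer} to local statements via a product-formula argument. By linearity of the maps $f \mapsto (I_\gamma(f))_\gamma$ and $f' \mapsto (J_\delta(f'))_\delta$, it suffices to treat pure tensors $f = \otimes'_v f_v$ with $f_v = \mathbf{1}_{\FG(\CO_{k_v})}$ at almost every $v$. For such $f$, equation (\ref{equation. global and local integrals 1}) gives the factorization $I_\gamma(f) = \prod_v O_{\gamma_v}(f_v)$ for every regular semisimple $\gamma$, and (\ref{equation. global and local integrals 2}) gives the analogous factorization of $J_\delta(f')$ for any pure tensor $f'$. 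The idea is therefore to build $f'$ place-by-place so that local orbital integrals match, and then invoke the product formulae.

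At each finite place $v$ outside a large enough finite set $S$ (containing the places where $D$ ramifies and the support data of $f$), the local pair $(\FG(k_v),\FH(k_v))$ is isomorphic to $(\FG'(k_v),\FH'(k_v))$ compatibly with the chosen hyperspecial subgroups; taking $f'_v$ to be the characteristic function of $\FG'(\CO_{k_v})$ under this identification furnishes the trivial ``fundamental lemma'' already alluded to in the introduction. At each remaining non-archimedean place $v \in S$, Theorem \ref{thm. main1} supplies $f'_v \in \CC_c^\infty(\FG'(k_v))$ (or a representative modulo the relevant quotient) such that $O_{\delta_v}(f'_v) = O_{\gamma_v}(f_v)$ whenever $\delta_v \in \FG'(k_v)_\rs$ matches some $\gamma_v \in \FG(k_v)_\rs$, and $O_{\delta_v}(f'_v) = 0$ otherwise. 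At each archimedean place one would need the archimedean analogue of Theorem \ref{thm. main1}, which is not addressed in this paper.

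With $f' = \otimes'_v f'_v$ thus constructed, the matching of global orbital integrals follows by multiplication: if $\delta \in \FG'(k)_\rs$ is matched by some $\gamma \in \FG(k)_\rs$, then $\gamma_v$ matches $\delta_v$ at every place $v$, so combining the local identities yields $J_\delta(f') = \prod_v O_{\delta_v}(f'_v) = \prod_v O_{\gamma_v}(f_v) = I_\gamma(f)$; if no global $\gamma$ matches $\delta$, one needs at least one place $v$ at which $\delta_v$ is locally unmatched, so that the vanishing $O_{\delta_v}(f'_v) = 0$ forces $J_\delta(f') = 0$. I expect two main obstacles. The more serious one is the archimedean smooth transfer, which is a substantive additional analytic problem that would likely demand a global-to-local strategy in the spirit of \cite{wa97} together with a suitable descent; this is the principal gap. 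The second is a Hasse principle for the invariants classifying regular semisimple $\FH\times\FH$-orbits in $\FG$, namely, the statement that if $\delta \in \FG'(k)_\rs$ is locally matched at every place by some $\gamma_v \in \FG(k_v)_\rs$, then it is globally matched by an element of $\FG(k)_\rs$. This should follow from a careful orbit classification refining Proposition \ref{prop. match orbits}, ultimately reducing to a cohomological Hasse principle for the relevant inner forms; without it the ``otherwise'' clause of the conjecture cannot be guaranteed from the local inputs alone.
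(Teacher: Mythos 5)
Your approach is essentially the paper's own. Note first that the paper does not prove Conjecture \ref{conj. global smooth transfer} unconditionally either: it proves it as Theorem \ref{thm. global smooth transfer} under the hypothesis that $D$ is split at all archimedean places, and the proof there is exactly your product-formula argument — reduce to pure tensors, factor $I_\gamma$ and $J_\delta$ via (\ref{equation. global and local integrals 1}) and (\ref{equation. global and local integrals 2}), take $f'_v=f_v$ under the identification $(\FG(k_v),\FH(k_v))\simeq(\FG'(k_v),\FH'(k_v))$ at every place where $D_v$ splits (which under the hypothesis includes all archimedean places, so no archimedean analysis is needed), and invoke Theorem \ref{thm. main1} at the finitely many remaining non-archimedean places. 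The archimedean transfer you single out as the principal gap is precisely why that hypothesis is imposed, so as a proof of the conjecture as stated your argument is conditional in exactly the same way the paper's is. Your second concern, the local-global principle needed for the vanishing clause, is legitimate in that the paper never spells it out, but it is not a real obstruction and needs no cohomological input: by Proposition \ref{prop. match orbits} and Remark \ref{rem. matching orbits}, matching is detected by characteristic polynomials; a local match at $v$ exists iff every irreducible factor over $k_v$ has degree divisible by the local index $d_v$, while a global match exists iff each irreducible factor over $k$ generates a field splitting $D$, and by the Albert--Brauer--Hasse--Noether splitting criterion these conditions are equivalent. Hence if $\delta\in\FG'(k)_\rs$ has no global match, there is a place $v$ (necessarily one where $D_v$ is non-split, hence non-archimedean under the hypothesis) at which $\delta_v$ is locally unmatched and the transferred function's orbital integral vanishes, which gives $J_\delta(f')=0$ as required.
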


\begin{thm}\label{thm. global smooth transfer}
Suppose that $D$ is split at all archimedean places. Then Conjecture
\ref{conj. global smooth transfer} holds.
\end{thm}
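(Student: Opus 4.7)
The plan is to deduce this global statement from the non-archimedean local transfer supplied by Theorem \ref{thm. main1}, glued together place by place by means of the factorizations \eqref{equation. global and local integrals 1} and \eqref{equation. global and local integrals 2} that express a global orbital integral as a product of local ones.

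By linearity of orbital integrals and of the prospective assignment $f\mapsto f'$, I would reduce to the case of a pure tensor $f=\otimes'_v f_v$ with $f_v$ equal to the characteristic function of a hyperspecial maximal compact outside a finite set $S$ of places of $k$ that contains all archimedean places and all non-archimedean places at which $D$ ramifies; outside $S$, the groups $\FG(k_v)$ and $\FG'(k_v)$ are canonically isomorphic since $D_v$ is split. I would then construct $f'=\otimes'_v f'_v$ one place at a time. At an archimedean place $v$, the standing hypothesis that $D$ splits at infinity gives $\FG(k_v)\simeq\FG'(k_v)$, and I take $f'_v=f_v$ under this identification, so all local orbits and their orbital integrals coincide tautologically. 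At a non-archimedean place $v$ where $D_v$ is split, do the same. At a non-archimedean place $v$ where $D_v$ is not split, I invoke Theorem \ref{thm. main1} to produce $f'_v\in\CC_c^\infty(\FG'(k_v))$ such that $O_{\delta_v}(f'_v)=O_{\gamma_v}(f_v)$ whenever $\delta_v\in\FG'(k_v)_\rs$ is matched by $\gamma_v\in\FG(k_v)_\rs$, and $O_{\delta_v}(f'_v)=0$ otherwise. At almost all places this leaves $f'_v$ equal to the characteristic function of $\FG'(\CO_{k_v})$, so $f'=\otimes'_v f'_v\in\CC_c^\infty(\FG'(\BA))$ is well defined.

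To verify the global matching, fix $\delta\in\FG'(k)_\rs$. If some $\gamma\in\FG(k)_\rs$ matches $\delta$ in the sense of Proposition \ref{prop. match orbits}, then $(\gamma,\delta)$ provides a local match at every place $v$, and by construction of $f'_v$ one has $O_{\delta_v}(f'_v)=O_{\gamma_v}(f_v)$ for every $v$; multiplying over $v$ and appealing to \eqref{equation. global and local integrals 1} and \eqref{equation. global and local integrals 2} (whose convergence and normalization of measures is compatible thanks to Remark \ref{matching stabilizer}) yields $J_\delta(f')=I_\gamma(f)$. If no such $\gamma$ exists globally, I would argue that there must be at least one place $v$ at which $\delta_v$ has no local match from $\FG(k_v)_\rs$; at that place the local transfer forces $O_{\delta_v}(f'_v)=0$, and hence $J_\delta(f')=0$ by the product formula.

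The main obstacle is the last step, namely a Hasse principle for matching of regular semisimple orbits: if $\delta\in\FG'(k)_\rs$ has a local match in $\FG(k_v)_\rs$ at every place $v$, then $\delta$ has a global match in $\FG(k)_\rs$. In view of Proposition \ref{prop. match orbits} and the identification of stabilizers $\FH_\gamma\simeq\FH'_\delta$ in Remark \ref{matching stabilizer}, this should reduce to a question about realizing a certain commutative semisimple $k$-subalgebra inside the matrix algebra $\M_{2m}(D)$, which in turn is controlled by the classical Hasse principle for central simple algebras together with a vanishing statement for Galois cohomology of $\FH_\gamma$. A secondary technicality is that Theorem \ref{thm. main1} only determines $f'_v$ modulo functions whose regular semisimple orbital integrals vanish, so I must fix a representative; this is harmless since Conjecture \ref{conj. global smooth transfer} tests only regular semisimple orbital integrals.
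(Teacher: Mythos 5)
Your construction is essentially the paper's own (one-line) proof: the paper deduces the theorem from the product expansions (\ref{equation. global and local integrals 1})--(\ref{equation. global and local integrals 2}) and Theorem \ref{thm. main1}, taking $f'_v=f_v$ at the archimedean and split non-archimedean places where $\FG(k_v)\simeq\FG'(k_v)$ and the local transfer at the finitely many ramified non-archimedean places, exactly as you do. The local--global matching principle you flag as the main obstacle is indeed the only point the paper leaves implicit, and it holds along the lines you sketch: by Proposition \ref{prop. match orbits} (which, as remarked in \S\ref{section. smooth transfer}, is valid over the number field $k$) matching is detected by characteristic polynomials, and if $\delta\in\FG'(k)_\rs$ is locally matched at every place then, writing the relevant characteristic polynomial as $\prod_i p_i$ with $L_i=k[t]/(p_i)$, every local degree $[L_{i,w}:k_v]$ is divisible by the local index $d_v$ of $D$ (Remark \ref{rem. matching orbits} at each $v$), which by Albert--Brauer--Hasse--Noether says precisely that each $L_i$ splits $D$; this is exactly the condition for an element of $\fg\fl_m(D)(k)$ with that characteristic polynomial to exist, hence for a global $\gamma\in\FG(k)_\rs$ matching $\delta$, so an unmatched $\delta$ fails to match at some ramified place and $J_\delta(f')=0$ by your product argument. (The Galois-cohomology vanishing you invoke is not needed, since matching is defined via characteristic polynomials; also, Theorem \ref{thm. main1} is stated for $\CC_c^\infty(S)$, so to get the group-level $f'_v$ you should pass through the surjection $\wt{q}$ and the identity $O(g,f)=O(s(g),\wt{f})$ from \S\ref{section. smooth transfer}, which is routine.)
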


\begin{proof}
This is a direct consequence of the relations (\ref{equation. global
and local integrals 1}) and (\ref{equation. global and local
integrals 2}) between global and local orbital integrals, and
Theorem \ref{thm. main1} on the existence of smooth transfer at the
non-archimedean places.
\end{proof}

\section{Multiplicity one}\label{section. multiplicity one}
The global linear period $\ell$ belongs to the space
$\Hom_{\FH(\BA)}(\pi,\BC)$. To factor it into local ones, we need to
study the space $\Hom_{\FH(k_v)}(\pi_v,\BC)$ for each place $v$ of
$k$. We expect the so-called multiplicity one property to hold at
each place $v$, that is, if $\pi_v$ is an irreducible admissible
representation of $\FG(k_v)$, then
$\dim\Hom_{\FH(k_v)}(\pi_v,\BC)\leq1$. If $(\FG(k_v),\FH(k_v))$
satisfies this multiplicity one property, we call it a Gelfand pair.
It was proved in \cite{jr} in the non-archimedean case and in
\cite{ag1} in the archimedean case that $(\FG'(k_v),\FH'(k_v))$ is a
Gelfand pair. When $m=1$, $v$ is non-archimedean and $D$ is a
general division algebra, $(\FG(k_v),\FH(k_v))$ is also a Gelfand
pair, as was proved by Prasad \cite{pr}. We have not been able to
show $(\FG(k_v),\FH(k_v))$ to be a Gelfand pair for general $m$ and
$D$. However, we can prove a weaker result which is enough for us to
factor the global period $\ell$, namely Proposition \ref{prop.
multiplicity} and Corollary \ref{cor. multiplicity one for unitary
repn}.

From now on, and until the end of the paper, we fix a $p$-adic field
$F$. We follow the same line as that of \cite{ag1} where an
effective way to prove results like multiplicity one for symmetric
pairs is systematically studied, and we refer the reader to
\cite{ag1} for more details.

\paragraph{Symmetric pairs}
Now let $D$ be a division algebra over $F$ of index $d$. Let $\FG$
and $\FH$ be as defined in \S\ref{section. intro} associated to $D$,
both viewed as algebraic groups over $F$ now. Write $G=\FG(F)$ and
$H=\FH(F)$. Let
$\epsilon=\begin{pmatrix}{\bf1}_m&0\\0&-{\bf1}_m\end{pmatrix}$ and
define an involution $\theta$ on $\FG$ by $\theta(g)=\epsilon
g\epsilon$. Then $\FH=\FG^\theta$, that is, $(\FG,\FH,\theta)$ is a
symmetric pair. When there is no confusion, we write $(\FG,\FH)$
instead of $(\FG,\FH,\theta)$ for simplicity. Let $\iota$ be the
anti-involution on $\FG$ defined by $\iota(g)=\theta(g^{-1})$. Write
$\FG^\iota=\{g\in\FG;\ \iota(g)=g\}$ and define a symmetrization map
$$s:\FG\lra\FG^\iota,\ s(g)=g\iota(g).$$
Via this map we can identify the $p$-adic symmetric space $S=G/H$
with its image in $\FG^\iota(F)$. Since $H^1(F,\FH)$ is trivial, we
also have $S=(\FG/\FH)(F)$.

Let $\theta$ act by its differential on $\fg=\Lie(\FG)$. Write
$\fh=\Lie(\FH)$. Thus, $\fh=\{X\in\fg;\ \theta(X)=X\}$. Write
$\fs=\{X\in\fg;\ \theta(X)=-X\}$. $\fs$ can be viewed as a sort of
``Lie algebra" for $\FG/\FH$, on which $\FH$ acts by adjoint action.
This action can be described more concretely as follows. It is easy
to see that $\fs\simeq\fg\fl_m(D)\oplus\fg\fl_m(D)$, and modulo this
isomorphism, the action of $\FH$ on $\fs$ is given by
$$(h_1,h_2)\cdot(X_1,X_2)=(h_1X_1h_2^{-1},h_2X_2h_1^{-1})$$ for
$(h_1,h_2)\in\FH$ and $(X_1,X_2)\in\fs$.

We fix the nondegenerate symmetric bilinear form $\pair{\ ,\ }$ on
$\fg(F)$ defined by
$$\pair{X,Y}=\tr(XY),\quad\mathrm{for}\ X,Y\in\fg(F),$$ where $\tr$
is the reduced trace map on $\fg\fl_{2m}(D)$, identified with the
space $\End_D(D^{2m})$. Notice that the form $\pair{\ ,\ }$ is both
$G$-invariant and $\theta$-invariant. When we want to emphasize the
index $m$, we write $\FG_m,\FH_m,\theta_m,\fs_m$. Notice that the
case $m=n$ and $D=F$ is just the case denoted by $(\FG',\FH')$ in
\S\ref{section. intro}.

We will consider the action of $H\times H$ on $G$ by left and right
translation, and the adjoint action of $H$ on $S$ or $\fs(F)$. These
actions are related by $$s\left((h_1,h_2)\cdot g\right)=h_1\cdot
s(g),$$ for $(h_1,h_2)\in H\times H$ and $g\in G$.

Now we recall some notions attached to a general symmetric pair
$(\FG,\FH,\theta)$. We refer the reader to \cite{ag1} and \cite{ag2}
for more details. Define $Q(\fs)=\fs/\fs^\FH$. Since $\FH$ is
reductive, there exists a unique $\FH$-equivariant splitting
$Q(\fs)\incl\fs$. Denote by $\phi:\fs\ra\fs/\FH$ the standard
projection. Let $\CN$ be the set of elements of $\fs$ that belong to
the null-cone of $\fg$. We call $\CN$ the null-cone of $\fs$, since
$\CN=\phi^{-1}(\phi(0))$ by \cite[Lemma 7.3.8]{ag1}. Note that
$\CN\subset Q(\fs)$. Let $R(\fs)=Q(\fs)-\CN$. We call an element
$g\in G$ \emph{admissible} if (i) $\Ad(g)$ commutes with $\theta$
and (ii) $\Ad(g)|_\fs$ is $\FH$-admissible. Notice that, in our
case, $Q(\fs)=\fs$.

A symmetric pair $(\FG,\FH,\theta)$ is called \emph{good}, if for
any closed $H\times H$ orbit $O$ in $G$, $\iota(O)=O$.

A symmetric pair $(\FG,\FH,\theta)$ is called \emph{regular} if for
any admissible $g\in G$ such that
$\CD(R(\fs)(F))^H\subset\CD(R(\fs)(F))^{\Ad(g)}$ we have
$$\CD(Q(\fs)(F))^H\subset\CD(Q(\fs)(F))^{\Ad(g)}.$$

For each nilpotent element $X\in\fs(F)$, there exists a group
homomorphism $\phi:\SL_2(F)\ra G$ such that $X=\d
\phi\left(\begin{pmatrix}0&1\\0&0\end{pmatrix}\right)$, $Y:=\d
\phi\left(\begin{pmatrix}0&0\\1&0\end{pmatrix}\right)$ belongs to
$\CN$, and ${\bf d}(X):=\d
\phi\left(\begin{pmatrix}1&0\\0&-1\end{pmatrix}\right)$ belongs to
$\fh(F)$ and is semisimple (cf. \cite[Lemma 7.1.11]{ag1}). We call
$(X,{\bf d}(X),Y)$ an $\fs\fl_2$-triple. Such a triple depends on
the choice of $\phi$. However, the choice does not really matter
(cf. \cite[Notation 7.1.12]{ag1}).

We say that a symmetric pair $(\FG,\FH,\theta)$ is of \emph{negative
defect} if for any nilpotent $X\in\fs(F)$ we have
$$\RTr(\ad({\bf d}(X))|_{\fh_X})<\dim Q(\fs),$$ where $\fh_X$ is the
centralizer of $X$ in $\fh$. By \cite[Proposition 7.3.5, Proposition
7.3.7, Remark 7.4.3]{ag1}, we see that if a symmetric pair is of
negative defect it is regular.

Let $g\in G$ be $H\times H$-semisimple and $x=s(g)$. Then $x$ is
semisimple both as an element of $G$ and with respect to the
$H$-action (cf. \cite[Proposition 7.2.1]{ag1}). The triple
$(\FG_x,\FH_x,\theta|_{\FG_x})$ is still a symmetric pair (clearly
$\theta$ preserves $\FG_x$ for $x\in\FG^\iota(F)$). A symmetric pair
obtained this way is called a descendant of $(\FG,\FH,\theta)$.
Notice that the ``Lie algebra'' of $\FG_x/\FH_x$ can be identified
with $\fs_x$, where $\fs_x$ is the set of elements in $\fs$
commuting with $x$. It was shown in \cite[Theorem 7.4.5]{ag1} that
if $(\FG,\FH,\theta)$ is a good symmetric pair such that all its
descendants are regular then it is a GK-pair. Here, the statement
that $(\FG,\FH)$ is a GK-pair means:
$$\CD(G)^{H\times H}\subset\CD(G)^\iota.$$

\paragraph{Descendants}
Now we return to the specific symmetric pair that concerns us in the
paper. To study the property of multiplicity one, as we have
explained, it is important to know all descendants of
$(\FG,\FH,\theta)$. The following proposition gives a list of all
possible descendants.
\begin{prop}\label{prop. descendant 0}
All descendants of $(\FG,\FH,\theta)$ are products of symmetric
pairs of the following types
\begin{enumerate}
\item
$\left(\R_{L/F}\left(\GL_r(D')\times\GL_r(D')\right),
\Delta(\R_{L/F}\GL_r(D')),\delta\right)$ for some field extension
$L/F$ and some central division algebra $D'$ over $L$,
\item
$\left(\R_{L'/F}\GL_r(D'\otimes_LL'),\R_{L/F}\GL_r(D'),\gamma\right)$
for some field extension $L/F$, a quadratic extension $L'/L$ and
some central division algebra $D'$ over $L$,
\item $(\GL_{2r}(D),\GL_r(D)\times\GL_r(D),\theta)$.
\end{enumerate}
\end{prop}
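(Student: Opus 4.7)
The plan is to translate the question into linear algebra on $V = D^{2m}$ equipped with the involution $\epsilon$, and then classify the possible descendants by analysing the extra data of a compatible semisimple element $x$. A descendant corresponds to $x = s(g)$ for some $\FH\times\FH$-semisimple $g \in G$; equivalently, $x \in \FG^\iota(F)$ is semisimple in $G$, and in addition $\sigma := x\epsilon = g\epsilon g^{-1}$ is $G$-conjugate to $\epsilon$, so its $\pm 1$-eigenspaces $W^\pm$ each have $D$-dimension $m$ (as do those, $V^\pm$, of $\epsilon$). The descendant is then $(\FG_x, \FH_x, \theta|_{\FG_x})$, with $\FG_x$ the centralizer of $x$ in $\GL_D(V)$.

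The next step is to decompose $V$ under the action of $x$. Since $x$ is semisimple, $F[x] = \prod_i L_i$ with $L_i = F[T]/(p_i)$ for irreducible $p_i \in F[T]$, giving $V = \bigoplus_i V_i$ with each $V_i$ an $L_i \otimes_F D^{op}$-module. The standard structure theory of centralizers of semisimple elements in $\GL$ over a division algebra yields
$$\FG_x \cong \prod_i \R_{L_i/F} \GL_{r_i}(D_i'),$$
where $D_i'$ is a central division algebra over $L_i$ Morita-equivalent to $L_i \otimes_F D$. The condition $\theta(x) = x^{-1}$ forces $\epsilon$ to carry $V_i$ to $V_{i^*}$, where $p_{i^*}(T) := T^{\deg p_i} p_i(T^{-1})/p_i(0)$ is the reciprocal of $p_i$.

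I would then do a case analysis on each $\epsilon$-orbit of components. If $p_i \neq p_i^*$, the components $V_i$ and $V_{i^*}$ are swapped and the two copies of $\R_{L/F}\GL_r(D')$ (with $L = L_i \simeq L_{i^*}$) get paired up with the swap involution, producing a factor of type (1). If $p_i = p_i^*$ but $p_i \neq T \pm 1$, then $\epsilon$ preserves $V_i$ and induces a nontrivial $F$-automorphism of $L_i$ sending $x \mapsto x^{-1}$; putting $L = L_i^{\langle \epsilon\rangle}$, $L' = L_i$ and descending $D_i'$ to a division algebra $D'$ over $L$ via the resulting Galois-equivariant structure, one obtains a factor of type (2). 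If $p_i = T \mp 1$, then $L_i = F$ and $\epsilon$ splits $V_i = V_i^+ \oplus V_i^-$ into $D$-subspaces of dimensions $a$ and $b$, giving a factor $(\GL_{a+b}(D), \GL_a(D) \times \GL_b(D), \theta)$.

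The only genuine computation, as opposed to bookkeeping, is to show in the last case that $a = b$, so that this factor is of type (3). This is where the full strength of $\sigma \sim_G \epsilon$, and not merely $\theta(x) = x^{-1}$, is essential. Using that $\sigma = \epsilon$ on the $x = +1$ eigenspace and $\sigma = -\epsilon$ on the $x = -1$ eigenspace, together with the observation that on case-(a) components both $\epsilon$ and $\sigma$ already have balanced $\pm 1$-eigenspaces, the equations $\dim V^\pm = \dim W^\pm = m$ yield a small linear system whose solution forces $a = b$ at each case-(c) component. Assembling the factors over all components then gives exactly the description in the proposition.
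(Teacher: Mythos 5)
Your proposal is correct, and its overall skeleton is the same as the paper's (which follows Aizenbud--Gourevitch, Theorem 7.7.3): decompose $V=D^{2m}$ under $F[x]$ via the irreducible factors $p_i$ of the minimal polynomial of $x$, identify $\FG_x\simeq\prod_i\R_{L_i/F}\GL_{r_i}(D_i)$ using the structure of $D\otimes_FL_i$, and sort the factors according to whether $p_i$ is distinct from its reciprocal, equal to it, or equal to $T\mp1$. The genuine difference lies in how the unbalanced eigenvalue-$\pm1$ factors $(\GL_{q+r}(D),\GL_q(D)\times\GL_r(D),\theta_{q,r})$ with $q\neq r$ are eliminated. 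The paper observes that the condition $x\in\FG^\iota(F)$ alone only yields the weaker list, and then rules out $q\neq r$ by invoking the explicit classification of semisimple elements of $S$ up to $H$-conjugacy (Proposition \ref{prop. descendant 1 group}, a Jacquet--Rallis-style matrix computation), a forward reference within the paper. You instead use directly that $\sigma=x\epsilon=g\epsilon g^{-1}$ is an involution $G$-conjugate to $\epsilon$, and compare the $\pm1$-eigenspace dimensions of $\epsilon$ and $\sigma$; since $\sigma=\pm\epsilon$ on the $x=\pm1$ components, the resulting linear system forces the $\pm1$ blocks to be balanced. This dimension count is clean, self-contained, and makes the proposition independent of the later orbit classification, which is a mild structural gain; the paper's route, by contrast, gets the fact essentially for free once Proposition \ref{prop. descendant 1 group} is in hand.

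One point to tighten: for your linear system to close, you need the contributions to the $\pm1$-eigenspaces of $\epsilon$ and of $\sigma$ to be balanced on \emph{all} components with $p_i\neq T\pm1$, whereas you assert this only for the swapped pairs ($p_i\neq p_i^{*}$). The claim is equally needed, and equally true, for the self-reciprocal components of type (2): there $\epsilon$ and $\sigma$ act $L_i/L$-semilinearly on $V_i$, and multiplication by the invertible odd element $x-x^{-1}\in L_i$ interchanges the $+1$- and $-1$-eigenspaces of each, so both are balanced. With that observation added, your argument is complete.
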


\begin{remark}
Here we use $\Delta$ to denote the diagonal embedding and use
$\R_{L/F}$ to denote the Weil restriction with respect to the field
extension $L/F$. The involution $\delta$ in (1) of the above
proposition is $(x,y)\mapsto(y,x)$, $\gamma$ in (2) is induced by
the nontrivial element of $\Gal(L'/L)$, and $\theta$ in (3) is the
one introduced before.
\end{remark}

\begin{proof}
In the case $D=F$, this proposition was first proved in
\cite[Proposition 4.3]{jr} and reproved in \cite[Theorem
7.7.3]{ag1}. Our proof is similar to that of \cite[Theorem
7.7.3]{ag1}.

Let $x\in\FG^\iota(F)$ be a semisimple element. Put $V=D^{2m}$ and
view $x$ as an element of $\GL_D(V)$. Let $m(t)=\prod_{i=1}^sp_i(t)$
be the minimal polynomial of $x$, where $p_i(t)\in F[t]$ is a monic
irreducible polynomial and $p_i\neq p_j$ if $i\neq j$. Set
$L_i:=F[t]/(p_i(t))$, which can be viewed as a field extension of
$F$. Then $F[x]\simeq\prod_{i=1}^s L_i$. $V$ is an
$(F[x],D)$-bimodule, and has a decomposition $V=\bigoplus_{i=1}^s
V_i$ where $V_i$ is a $D$-submodule and $L_i$ acts faithfully on it.
Thus $V_i$ is a $D\otimes_FL_i$-module. Since $D\otimes_FL_i$ is a
central simple algebra over $L_i$, $D\otimes_F L_i=\M_{c_i\times
c_i}(D_i)$ for some central division algebra $D_i$ over $L_i$. Set
$V_i=W_i^{\oplus t_i}$ where $W_i\simeq D_i^{\oplus c_i}$. The above
discussion on linear algebra over $D$ can be found in
\cite[\S3]{yu}.

Therefore, $\FG_x\simeq\prod_{i=1}^s\R_{L_i/F}\GL_{t_i}(D_i)$. The
rest of the proof is the same as that of \cite[Theorem 7.7.3]{ag1}.
We remark that only the condition $x\in\FG^\iota(F)$ is used in the
proof of \cite[Theorem 7.7.3]{ag1}. By this condition, we can only
deduce a weaker result, namely that all descendants of
$(\FG,\FH,\theta)$ are products of symmetric pairs of types 1, 2 or
3, where a type 3 symmetric pair is of the form
$$(\GL_{q+r}(D),\GL_{q}(D)\times\GL_r(D),\theta_{q,r}),$$ where
$\theta_{q,r}(g)=\epsilon_{q,r}g\epsilon_{q,r}$ with
$\epsilon_{q,r}=\begin{pmatrix}{\bf 1}_q&0\\0&-{\bf
1}_r\end{pmatrix}$ and $q$ may not equal $r$. Since $x$ lies in
$S\subset\FG^\iota(F)$, the list of possibilities for $\fs_x$
computed in Proposition \ref{prop. descendant 1 group} below lets us
eliminate factors of the form
$(\GL_{q+r}(D),\GL_q(D)\times\GL_r(D),\theta_{q,r})$ with $q\neq r$.
\end{proof}

\paragraph{Multiplicity one}
From the above classification of the descendants, we can see that,
for any $H\times H$-semisimple $g\in G$, $H^1(F,\FH_{s(g)})$ is
trivial. By \cite[Corollary 7.1.5]{ag1}, this implies that the
symmetric pair $(\FG,\FH)$ is good.

This classification also implies that all the descendants of
$(\FG,\FH)$ are regular. The reason is that, after base change to
some extension field $F'$, they are of negative defect over $F'$
(proved in \cite[Theorem 7.6.5]{ag1} for symmetric pairs of types 1
and 2 and in \cite[Lemma 7.7.5]{ag1} for symmetric pairs of type 3),
and hence they are of negative defect over $F$ by \cite[Lemma
4.2.7]{ag2}.

Therefore $(\FG,\FH)$ is a GK-pair. In particular, by
\cite[Corollary 8.1.6]{ag1}, it satisfies the following property,
which is a weaker variant of the property defining Gelfand pairs.

\begin{prop}\label{prop. multiplicity}
For any irreducible admissible representation $\pi$ of $G$ we have
$$\dim\Hom_H(\pi,\BC)\cdot\dim\Hom_H(\wt{\pi},\BC)\leq 1,$$
where $\wt{\pi}$ is the contragredient of $\pi$.
\end{prop}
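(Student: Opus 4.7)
The plan is to deduce the statement immediately from the Gelfand-Kazhdan criterion of \cite[Corollary 8.1.6]{ag1}, once the GK-pair property
\[
\CD(G)^{H\times H}\subset\CD(G)^\iota
\]
has been verified for $(\FG,\FH,\theta)$. That verification has already been assembled in the paragraphs preceding the proposition, so I would simply record it and then quote \cite[Corollary 8.1.6]{ag1}.

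For the record, let me indicate how the three ingredients of the GK-pair property combine. First, goodness of $(\FG,\FH,\theta)$ follows from Proposition \ref{prop. descendant 0} together with the triviality of $H^1(F,\FH_{s(g)})$ for every $H\times H$-semisimple $g$, via \cite[Corollary 7.1.5]{ag1}. Second, every descendant is regular: after base change to a splitting field of the relevant division algebra, the three families listed in Proposition \ref{prop. descendant 0} have negative defect by \cite[Theorem 7.6.5]{ag1} in the cases of type~(1) and~(2) and by \cite[Lemma 7.7.5]{ag1} in the case of type~(3); negative defect then descends to $F$ by \cite[Lemma 4.2.7]{ag2}, and negative defect implies regularity by \cite[Propositions 7.3.5, 7.3.7 and Remark 7.4.3]{ag1}. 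Third, goodness together with regularity of all descendants implies the GK-pair property by \cite[Theorem 7.4.5]{ag1}.

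The Gelfand-Kazhdan argument producing the final multiplicity bound is then short: given nonzero $\alpha\in\Hom_H(\pi,\BC)$ and $\beta\in\Hom_H(\wt\pi,\BC)$, the matrix-coefficient construction attaches to $(\alpha,\beta)$ an $H\times H$-invariant distribution on $G$; the GK-pair hypothesis forces this distribution to be $\iota$-invariant, and the resulting symmetry combined with irreducibility of $\pi$ yields the product-of-dimensions bound $\dim\Hom_H(\pi,\BC)\cdot\dim\Hom_H(\wt\pi,\BC)\leq 1$. I do not anticipate any genuine obstacle: the only substantive work sits in the descendant classification of Proposition \ref{prop. descendant 0}, which has already been carried out, and in the uniform verification of negative defect, which is covered by existing results of \cite{ag1, ag2}. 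The rest is an appeal to the cited corollary.
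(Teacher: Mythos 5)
Your proposal is correct and follows essentially the same route as the paper: the paper also deduces goodness from the descendant classification together with the triviality of $H^1(F,\FH_{s(g)})$ via \cite[Corollary 7.1.5]{ag1}, establishes regularity of all descendants through negative defect after base change (\cite[Theorem 7.6.5, Lemma 7.7.5]{ag1}, descended by \cite[Lemma 4.2.7]{ag2}), concludes the GK-pair property by \cite[Theorem 7.4.5]{ag1}, and then quotes \cite[Corollary 8.1.6]{ag1} for the multiplicity bound. Your added sketch of the Gelfand--Kazhdan matrix-coefficient argument is just an unpacking of that cited corollary, not a different method.
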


\begin{cor}\label{cor. multiplicity one for unitary repn}
If $\pi$ is an irreducible unitary admissible representation of $G$,
we have $$\dim\Hom_H(\pi,\BC)\leq1.$$
\end{cor}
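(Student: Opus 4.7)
The plan is to leverage the unitarity hypothesis to identify the contragredient $\wt\pi$ with the complex-conjugate representation $\bar\pi$, thereby reducing the desired bound to the multiplicity inequality already established in Proposition \ref{prop. multiplicity}.

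First I would note that since $\pi$ is unitary, there is a $G$-invariant positive-definite Hermitian inner product on the space of $\pi$. This inner product induces an isomorphism of smooth admissible representations $\wt\pi \simeq \bar\pi$, where $\bar\pi$ denotes the representation on the same underlying space as $\pi$ but with the conjugate complex structure. Next, since the $H$-action on the target $\BC$ is trivial, complex conjugation provides a natural conjugate-linear bijection between $\Hom_H(\pi,\BC)$ and $\Hom_H(\bar\pi,\BC)$; in particular, these two spaces have the same complex dimension. Combining the two observations, one obtains $\dim\Hom_H(\wt\pi,\BC) = \dim\Hom_H(\pi,\BC)$.

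Substituting this equality into the inequality of Proposition \ref{prop. multiplicity} yields $(\dim\Hom_H(\pi,\BC))^2 \leq 1$, and so $\dim\Hom_H(\pi,\BC) \leq 1$ as desired. There is no substantive obstacle at this stage: all of the real work lies in the GK-pair property that underlies Proposition \ref{prop. multiplicity}, which was established via the classification of descendants in Proposition \ref{prop. descendant 0} together with the regularity and negative-defect arguments recalled just above.
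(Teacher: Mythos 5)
Your proposal is correct and follows essentially the same route as the paper: use unitarity to identify $\wt\pi$ with $\bar\pi$, transport $H$-invariant functionals by complex conjugation to get $\dim\Hom_H(\pi,\BC)=\dim\Hom_H(\wt\pi,\BC)$, and conclude from Proposition \ref{prop. multiplicity}. The only difference is that you spell out the conjugate-linear bijection of Hom spaces explicitly, whereas the paper merely notes that conjugating a nonzero invariant form on $\pi$ yields one on $\bar\pi\simeq\wt\pi$; the underlying idea is identical.
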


\begin{proof}
The following ``well known'' argument was pointed out by Prasad to
the author. If $\pi$ is unitary, then $\bar{\pi}=\wt{\pi}$, where
$\bar{\pi}$ denotes the complex conjugate of $\pi$. Observe that if
$\pi$ has an $H$-invariant form $\ell$, taking the complex conjugate
of the form, one can obtain an $H$-invariant form $\bar{\ell}$ on
$\bar{\pi}\simeq\wt{\pi}$.

\end{proof}

\begin{remark} In general, we expect that $(\FG,\FH)$ is a Gelfand pair.
When $D=F$, this is the main theorem of \cite{jr}. For $m=1$ and
general $D$, it was proved by Prasad in \cite[\S7]{pr}. For general
$m$ and $D$, we do not know how to prove it, because we do not know
a Gelfand-Kazhdan type realization for the contragredient
representation of an irreducible admissible representation of $G$.
However, if $D$ is a quaternion algebra, there is an
anti-automorphism $\tau$ of $G$ such that $\tau^2\in\Ad(G(F))$,
$\tau$ preserves any closed conjugacy class in $G(F)$, and
$\tau(H)=H$ (cf. \cite[Theorem 3.1]{ra}). Thus, by \cite[Proposition
2.1.6]{ag2}, we have the following result.
\end{remark}

\begin{cor}
If $D$ is a quaternion algebra, for any irreducible admissible
representation $\pi$ of $G$ we have $$\dim\Hom_H(\pi,\BC)\leq1.$$
\end{cor}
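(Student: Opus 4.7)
The plan is to combine the anti-automorphism $\tau$ supplied by [ra, Theorem 3.1] with the multiplicity bound of Proposition \ref{prop. multiplicity}, exactly as the preceding remark suggests. The whole argument is a Gelfand--Kazhdan style symmetrization of the $\Hom$ spaces, packaged in [ag2, Proposition 2.1.6], which takes as input an anti-automorphism with the three properties listed in the remark and outputs both an isomorphism $\wt\pi\simeq\pi^\tau$ and the $H$-equivariance needed to compare $\pi$ with $\wt\pi$.

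First, I would use the properties that $\tau$ is an anti-automorphism of $G$ with $\tau^2\in\Ad(G(F))$ and that $\tau$ preserves every closed conjugacy class of $G$ to deduce that every $\Ad(G)$-invariant distribution on $G$ is fixed by $\tau$. By Harish-Chandra's reconstruction of irreducible admissible representations from their distribution characters, this forces an isomorphism $\pi^\tau\simeq\wt\pi$ for any irreducible admissible representation $\pi$ of $G$, where $\pi^\tau(g):=\pi(\tau(g))$. This is precisely [ag2, Proposition 2.1.6].

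Second, the third property $\tau(H)=H$ gives
\[
\dim\Hom_H(\wt\pi,\BC)=\dim\Hom_H(\pi^\tau,\BC)=\dim\Hom_{\tau(H)}(\pi,\BC)=\dim\Hom_H(\pi,\BC).
\]
Plugging this into the inequality $\dim\Hom_H(\pi,\BC)\cdot\dim\Hom_H(\wt\pi,\BC)\le 1$ of Proposition \ref{prop. multiplicity}, we obtain $\bigl(\dim\Hom_H(\pi,\BC)\bigr)^2\le 1$, whence $\dim\Hom_H(\pi,\BC)\le 1$.

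The real content of the corollary is therefore not produced here: it already sits in Proposition \ref{prop. multiplicity} (the GK-pair property of $(\FG,\FH)$) and in the existence of the quaternionic $\tau$ of [ra]. The only possible obstacle is checking that the three properties of $\tau$ match the hypotheses of [ag2, Proposition 2.1.6], but this is immediate from the formulation quoted in the remark. Thus the proof is a one-line application of these two ingredients.
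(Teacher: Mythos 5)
Your proposal is essentially the paper's own proof: the paper obtains the corollary in exactly this way, combining the inequality of Proposition \ref{prop. multiplicity} (the GK-pair property) with the anti-automorphism $\tau$ of \cite[Theorem 3.1]{ra} through \cite[Proposition 2.1.6]{ag2}, which gives $\wt{\pi}\simeq\pi^\tau$ and hence $\dim\Hom_H(\wt{\pi})=\dim\Hom_H(\pi)$, so that the product bound becomes $(\dim\Hom_H(\pi,\BC))^2\leq1$. Only a cosmetic caveat: since $\tau$ is an anti-automorphism one should set $\pi^\tau(g)=\pi(\tau(g)^{-1})$ (or act on the dual), and the step you phrase as ``every $\Ad(G)$-invariant distribution is $\tau$-fixed'' is stronger than needed --- the cited proposition only uses the equality of characters on the (closed) regular semisimple classes together with Harish-Chandra's local integrability and density results.
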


\begin{remark}\label{rem. archimedean}
The results of this section also hold when $F=\BR$, and they can be
proved by the same arguments. All the notions we have introduced and
all the propositions and theorems we have quoted hold in the
archimedean case.
\end{remark}

\section{Smooth transfer}\label{section. smooth transfer}
We keep the notations as before, and continue to let $F$ be a
$p$-adic field. In this section, we will show the existence of the
smooth transfer with respect to the relative trace formulae
concerned in this paper. Our strategy here follows the somewhat
standard procedure, that was also used to study smooth transfer in
other cases (cf. \cite{zh1} or \cite{zh} for more details). This
strategy arises from the work of Waldspurger on endoscopic transfer
(cf. \cite{wa97}). After several reduction steps, we reduce the
question to proving the property that ``the Fourier transform
commutes with smooth transfer''. We refer the reader to Theorem
\ref{thm. fourier} for the exact statement. This property will be
proved in Section \ref{section. local orbital integrals}.

The local orbital integrals that we consider are
\begin{equation}\label{equation. local orbital
1}O(g,f)=\int_{H_g\bs(H\times H)}f(h_1^{-1}gh_2)\ \d h_1\ \d
h_2,\end{equation} where $g\in G$ is $H\times H$-regular semisimple,
and $f\in\CC_c^\infty(G)$. The quotient map $q:G\ra G/H=S$ gives
rise to a surjection $\wt{q}:\CC_c^\infty(G)\ra\CC_c^\infty(S)$
defined by $$(\wt{q}f)(\bar{y})=\int_Hf(yh)\ \d h$$(cf. \cite[Lemma
in Section 1.21]{bz}). Let $\wt{f}=\wt{q}(f)$. We identify $S$ with
its image in $\FG^\iota(F)$ under the symmetrization map $s$, and
view $\wt{f}$ as a $\CC_c^\infty$-function on the image of $S$.
Thus, by definition, $\wt{f}(x)=\wt{f}(\bar{g})$ if $x=s(g)$. Now
let $g\in G$ be $H\times H$-regular semisimple. Then $x=s(g)$ is
$H$-regular semisimple (cf. \cite[Proposition 7.2.17]{ag1}) and we
have the relations
$$H_g\bs(H\times H)\twoheadrightarrow H_g(\{1\}\times H)\bs(H\times
H)\stackrel{\pr_1}{\simeq}H_x\bs H,$$ and $H_g\cap(\{1\}\times
H)=\{1\}\times\{1\}$. Therefore we have
$$\begin{aligned}O(g,f)
&=\int_{H_g(\{1\}\times H)\bs(H\times H)}\int_{\{1\}\times
H}f(h_1^{-1}xh_2)\ \d h_2\ \d h_1\\
&=\int_{H_x\bs H}\wt{f}(h^{-1} x h)\ \d h.
\end{aligned}$$ Henceforth it suffices to consider the orbital integrals for
$\CC_c^\infty(S)$ with respect to the $H$-action. Eventually, we
also have to consider the orbital integrals for
$\CC_c^\infty(\fs(F))$ with respect to the $H$-action.

\paragraph{Orbits}
First, we classify all $H$-semisimple orbits of $S$ and $\fs(F)$. We
remark that the results here on the orbits also hold when we replace
$F$ by a number field $k$. For each semisimple element $x$ in $S$ or
$\fs(F)$, it is also important to determine the couple
$(\FH_x,\fs_x)$ which is also called the descendant of $(\FH,\fs)$
at $x$. The reason for considering a semisimple element $x$ of $S$
(resp. $\fs(F)$) and the descendant of $(\FH,\fs)$ at $x$ is the
following. We can reduce the study of the orbital integrals of an
element $f$ in $\CC_c^\infty(S)$ (resp. $\CC_c^\infty(\fs(F))$) at
regular semsimple elements near $x$ to the study of orbital
integrals for the $H_x$-action of an appropriate element $f^\#_x$ in
$\CC_c^\infty(\fs_x(F))$ at regular semisimple elements close to 0
in $\fs_x(F)$. Here $f^\#_x$ is obtained from $f$ by a process
called semisimple descent. We refer the reader to \cite[Proposition
3.11]{zh1} for more details.

\begin{prop}\label{prop. descendant 1 group}
\begin{enumerate}
\item Each semisimple element $x$ of $S$ is $H$-conjugate to an element of
the form
$$x(A,m_1,m_2):=\begin{pmatrix}A&0&0&{\bf1}_r&0&0\\
0&{\bf1}_{m_1}&0&0&0&0\\
0&0&-{\bf1}_{m_2}&0&0&0\\
A^2-{\bf1}_r&0&0&A&0&0\\
0&0&0&0&{\bf1}_{m_1}&0\\
0&0&0&0&0&-{\bf1}_{m_2}\end{pmatrix},$$ with $m=r+m_1+m_2$, $A\in
\fg\fl_r(D)$ being semisimple without eigenvalues $\pm1$ and unique
up to conjugation. Moreover, $x(A,m_1,m_2)$ is regular if and only
if $m_1=m_2=0$ and $A$ is regular in $\fg\fl_m(D)$.
\item Let $x=x(A,m_1,m_2)$ in $S$ be semisimple. Then the descendant
$(\FH_x,\fs_x)$ is isomorphic to the product
$$\left(\GL_r(D)_A,\fg\fl_r(D)_A\right)\times(\FH_{m_1},\fs_{m_1})
\times(\FH_{m_2},\fs_{m_2}).$$ Here $\GL_r(D)_A$ and $\fg\fl_r(D)_A$
are the centralizers of $A$ in $\GL_r(D)$ and $\fg\fl_r(D)$
respectively, and $\GL_r(D)_A$ acts on $\fg\fl_r(D)_A$ by the
adjoint action.
\end{enumerate}
\end{prop}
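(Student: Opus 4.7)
The plan is to decompose $V=D^{2m}$ into $x$-eigenspaces, interlock this with the $\epsilon$-structure from $\FH$, and crucially exploit the refinement $x\in S$ (rather than merely $x\in\FG^\iota(F)$) to pin down the canonical form.

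First I would set $\sigma:=\epsilon x$ for $x\in S$ semisimple. The relation $\theta(x)=\epsilon x\epsilon=x^{-1}$ gives $\sigma^{2}=1$, and writing $x=g\iota(g)=g\epsilon g^{-1}\epsilon$ for some $g\in G$ yields $\sigma=(\epsilon g)\epsilon(\epsilon g)^{-1}$, so $\sigma$ is $G$-conjugate to $\epsilon$ and hence $\dim V^{+}(\sigma)=\dim V^{-}(\sigma)=m$. The $H$-action on $x$ by conjugation transports to $H$-conjugation on $\sigma$, since $H$ centralises $\epsilon$. Semisimplicity of $x$ decomposes $V=V_{1}\oplus V_{-1}\oplus V^{\mathrm{gen}}$, with $V_{\pm 1}=\ker(x\mp 1)$ and $V^{\mathrm{gen}}$ the sum of the remaining eigenspaces; the identity $\epsilon x\epsilon=x^{-1}$ shows each summand is $\epsilon$-stable and splits further as $V_{\ast}=V_{\ast}^{+}\oplus V_{\ast}^{-}$ with $V_{\ast}^{\pm}=V_{\ast}\cap W_{\pm}$.

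Next I carry out the dimension count. Set $m_{1}^{\pm}:=\dim V_{1}^{\pm}$, $m_{2}^{\pm}:=\dim V_{-1}^{\pm}$, $r^{\pm}:=\dim(V^{\mathrm{gen}}\cap W_{\pm})$, all $H$-invariants. From $\dim W_{\pm}=m$ one obtains $m_{1}^{+}+m_{2}^{+}+r^{+}=m_{1}^{-}+m_{2}^{-}+r^{-}=m$. On the generic part, writing $x|_{V^{\mathrm{gen}}}=\begin{pmatrix}a&b\\c&d\end{pmatrix}$ in the decomposition $V^{\mathrm{gen}}=U_{1}\oplus U_{2}$ with $U_{i}=V^{\mathrm{gen}}\cap W_{i}$, the off-diagonal block $b$ is injective: otherwise $bv=0$ with $v\in U_{2}$ combined with the $\theta$-relation $d^{2}-cb=1$ forces $d^{2}v=v$, producing a $\pm 1$-eigenvector of $x$ inside $V^{\mathrm{gen}}$, a contradiction. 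Hence $r^{+}=r^{-}=:r$ and a direct eigenvector computation shows $\dim V^{\pm}(\sigma)|_{V^{\mathrm{gen}}}=r$. Combined with $\sigma|_{V_{1}}=\epsilon$ and $\sigma|_{V_{-1}}=-\epsilon$, the balance $\dim V^{\pm}(\sigma)=m$ reads $m_{1}^{+}+m_{2}^{-}+r=m_{1}^{-}+m_{2}^{+}+r=m$, and together with the $\epsilon$-equation this forces $m_{1}^{+}=m_{1}^{-}=:m_{1}$ and $m_{2}^{+}=m_{2}^{-}=:m_{2}$.

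Then I put $x|_{V^{\mathrm{gen}}}$ in canonical form: the element $(h_{1},h_{1}b)\in\FH$ normalises $b$ to ${\bf 1}_{r}$, and the $\theta$-relations $ab=bd$ and $a^{2}-bc=1$ then force $d=a=:A$ and $c=A^{2}-{\bf 1}_{r}$, with residual symmetry $A\mapsto h_{1}Ah_{1}^{-1}$ for $h_{1}\in\GL_{r}(D)$, and $A$ inherits semisimplicity from $x$ together with the absence of $\pm 1$ eigenvalues. Assembling the three pieces yields the canonical form $x(A,m_{1},m_{2})$. Regularity follows because the stabiliser of $x(A,m_{1},m_{2})$ decomposes as $\GL_{r}(D)_{A}\times\FH_{m_{1}}\times\FH_{m_{2}}$, whose dimension is minimised precisely when $m_{1}=m_{2}=0$ and $A$ is regular in $\fg\fl_{m}(D)$. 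For part (2), on $V_{\pm 1}$ where $x$ is scalar every element of $\fs_{m_{i}}$ (resp.\ $\FH_{m_{i}}$) commutes with $x$, while on $V^{\mathrm{gen}}$ the commutation equations $[X,x|_{V^{\mathrm{gen}}}]=0$ reduce (using $b={\bf 1}_{r}$ and the $\theta$-relations) to the single condition that the $\GL_{r}(D)$-component centralise $A$, giving $\fg\fl_{r}(D)_{A}$ at the Lie algebra level and $\GL_{r}(D)_{A}$ at the group level.

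The hard part will be the dimension count: without the refinement $x\in S$ one only obtains $m_{1}^{+}+m_{2}^{+}=m_{1}^{-}+m_{2}^{-}$ from the $\epsilon$-structure alone, which permits orbits with $m_{1}^{+}\neq m_{1}^{-}$. The extra balance from $\dim V^{\pm}(\sigma)=m$ is exactly what eliminates the spurious factors $(\GL_{q+r}(D),\GL_{q}(D)\times\GL_{r}(D),\theta_{q,r})$ with $q\neq r$ appearing in the intermediate step of Proposition \ref{prop. descendant 0}.
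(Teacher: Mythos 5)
Your proof is correct in substance, but it takes a genuinely different route from the paper. The paper's proof imports the machinery of Jacquet--Rallis: it writes $x=\begin{pmatrix}A&B\\P&Q\end{pmatrix}$, cites \cite[Lemma 4.2, Proposition 2.1]{jr} for the semisimplicity of the blocks $A,Q,BP,PB$ and of $\begin{pmatrix}0&B\\P&0\end{pmatrix}$, normalizes $B$ to $\begin{pmatrix}{\bf 1}_r&0\\0&0\end{pmatrix}$ by an $H$-conjugation, and then lets the relations $A^2={\bf 1}+BP$, $AB=BQ$, etc.\ together with ``the same discussion as in \cite[Proposition 4.1]{jr}'' produce $x(A,m_1,m_2)$; the fact that $x$ lies in $S$ (and not merely in $\FG^\iota(F)$) is what ultimately balances the $\pm1$ blocks, exactly the point invoked at the end of the proof of Proposition \ref{prop. descendant 0}. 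You instead decompose $V$ by the isotypic pieces of $x$ first, so that on the generic part the off-diagonal blocks are automatically invertible, and you extract the balance $m_1^+=m_1^-$, $m_2^+=m_2^-$ from the observation that $\sigma=\epsilon x=(\epsilon g)\epsilon(\epsilon g)^{-1}$ is an involution $G$-conjugate to $\epsilon$, whence $\dim V^{\pm}(\sigma)=m$. This is self-contained, avoids the citations, and makes completely explicit where $x\in S$ enters --- which is arguably clearer than the paper's deferral; the endgame on the generic block (normalize $b={\bf 1}_r$, then $d=a=A$, $c=A^2-{\bf 1}_r$) coincides with the paper's computation.

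Three small points should be patched, none fatal. First, injectivity of $b$ alone only gives $r^-\leq r^+$; you need the symmetric argument for $c$ (if $cu=0$ then $a^2u=u$ and $\ker c$ is $a$-stable by $ca=dc$) to get $r^+=r^-$. Second, in the injectivity argument itself, $d^2v=v$ does not by itself give a $\pm1$-eigenvector: one must note that $\ker b\subset U_2$ is $x$-stable (since $b(dv)=a(bv)=0$), so $x^2={\bf 1}$ on the nonzero $x$-stable subspace $0\oplus\ker b\subset V^{\mathrm{gen}}$, contradicting $V^{\mathrm{gen}}\cap(V_1\oplus V_{-1})=0$. Third, ``$A$ inherits semisimplicity from $x$'' deserves a line: e.g.\ $x|_{V^{\mathrm{gen}}}+(x|_{V^{\mathrm{gen}}})^{-1}=\mathrm{diag}(2A,2A)$ is semisimple because $x$ and $x^{-1}$ are commuting semisimple elements (this replaces the appeal to \cite[Lemma 4.2]{jr}). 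Finally, like the paper you leave the uniqueness of $A$ up to conjugation and the fact that each $x(A,m_1,m_2)$ is semisimple and lies in $S$ as routine; that is consistent with the level of detail in the paper's own proof.
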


\begin{proof}

The first assertion was proved in \cite[Proposition 4.1]{jr} in the
case $D=F$. The reader can check that the same proof goes through
for general $D$ without difficulty. We only provide some steps in
the proof.

Let $x=\begin{pmatrix}A&B\\P&Q\end{pmatrix}$ be a semisimple element
of $S$ inside $\FG^\iota(F)$, with $A,B,P,Q$ in $\fg\fl_m(D)$. We
claim that $A,Q,BP,PB$ and $\begin{pmatrix}0&B\\P&0\end{pmatrix}$
are semisimple matrices. This is the case when $D=F$ by \cite[Lemma
4.2]{jr}.  Actually, in the proof of \cite[Lemma 4.2]{jr}, one can
assume that $F$ is algebraically closed, since the condition of
being semisimple does not depend on the ground field. Hence the
claim for any general $D$ follows. Since $x\in\FG^\iota(F)$, we have
the relations
\begin{equation}\label{equation. matrix condition}
A^2={\bf 1}_m+BP,\quad Q^2={\bf 1}_m+PB,\quad AB=BQ,\quad QP=PA.
\end{equation}
Replacing $x$ by a conjugate under $H$, we may assume
$B=\begin{pmatrix}{\bf 1}_r&0\\0&0\end{pmatrix}$. Since
$\begin{pmatrix}0&B\\P&0\end{pmatrix}$ is semisimple, by
\cite[Proposition 2.1]{jr} (which is valid here), $x$ is
$H$-conjugate to an element of the form
$\begin{pmatrix}A&\begin{pmatrix}{\bf 1}_r&0\\0&0\end{pmatrix}\\
\begin{pmatrix}C_r&0\\0&0\end{pmatrix}&D\end{pmatrix}$ where
$C_r\in\GL_r(D)$ is semisimple. The relations (\ref{equation. matrix
condition}) force such an $H$-conjugate of $x$ to be of the form
$\begin{pmatrix}A&0&{\bf1}_r&0\\
0&A'&0&0\\
A^2-{\bf1}_r&0&A&0\\
0&0&0&Q'\end{pmatrix}$, where $A\in\fg\fl_r(D)$ is semisimple
without eigenvalues $\pm1$, $A'$ and $Q'$ are elements of order 2.
By the same discussion as in the proof of \cite[Proposition
4.1]{jr}, we see that $x$ is $H$-conjugate to some $x(A,m_1,m_2)$.

Conversely, by similar discussions as in the proofs of \cite[Lemma
4.3, Proposition 4.1]{jr}, we see that each $x(A,m_1,m_2)$ is
semisimple and lies in $S$. The remaining assertions such as the
uniqueness of $A$ up to conjugation are straightforward.

For the second assertion, since descendants $(\FH_x,\fs_x)$ can be
obtained by computing the centralizers of $x$ in $\FH$ and $\fs$, we
leave the details to the reader.

\end{proof}

\begin{prop}\label{prop. descendant 1 lie}
\begin{enumerate}
\item Each semisimple element $X$ of $\fs(F)$ is $H$-conjugate to an
element of the form
$$X(A)=\begin{pmatrix}0&0&{\bf1}_r&0\\0&0&0&0\\A&0&0&0\\0&0&0&0\end{pmatrix}$$
with $A\in\GL_r(D)$ being semisimple and unique up to conjugation.
Moreover, $X(A)$ is regular if and only if $r=m$ and $A\in\GL_m(D)$
is regular.
\item Let $X=X(A)$ in $\fs(F)$ be semisimple. Then the descendant $(\FH_X,\fs_X)$
is isomorphic to the product
$$\left(\GL_r(D)_A,\fg\fl_r(D)_A\right)\times(\FH_{m-r},\fs_{m-r}).$$
\end{enumerate}
\end{prop}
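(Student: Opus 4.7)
The plan is to mirror the proof of Proposition \ref{prop. descendant 1 group}, with the simplifications afforded by the Lie algebra setting (the constraint $\theta(X)=-X$ is weaker than $\theta(x)=x^{-1}$). Writing $X=\begin{pmatrix}0&B\\P&0\end{pmatrix}$ with $B,P\in\fg\fl_m(D)$ under the identification $\fs\simeq\fg\fl_m(D)^{\oplus2}$, recall that $H=\GL_m(D)\times\GL_m(D)$ acts by $(h_1,h_2)\cdot(B,P)=(h_1Bh_2^{-1},h_2Ph_1^{-1})$. Over $D$ every $m\times m$ matrix has a well-defined rank, so by two-sided multiplication $B$ can be brought to the form $B_0=\begin{pmatrix}{\bf1}_r&0\\0&0\end{pmatrix}$ with $r=\rank_D B$. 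The stabilizer $P_{B_0}$ of $B_0$ in $H$ is an explicit ``parabolic-type'' subgroup parametrized by $(a,d_1,d_2,b_1,c_2)$ with $a\in\GL_r(D)$, $d_i\in\GL_{m-r}(D)$ and $b_1,c_2$ free; its remaining parameters will be used to normalize $P$.

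Writing $P=\begin{pmatrix}A&B'\\C'&D'\end{pmatrix}$ in the same blocks, the square $X^2=\mathrm{diag}(BP,PB)$ is block-triangular, with $BP=\begin{pmatrix}A&B'\\0&0\end{pmatrix}$ and $PB=\begin{pmatrix}A&0\\C'&0\end{pmatrix}$. Semisimplicity of $X^2$ forces $\Im(B')\subset\Im(A)$ and, symmetrically, the row-span of $C'$ to lie in that of $A$; using the parameters $b_1,c_2$ in $P_{B_0}$ I then conjugate $B'$ and $C'$ away. The leftover piece of $X$ on the complementary $D^{m-r}\oplus D^{m-r}$ is $\begin{pmatrix}0&0\\D'&0\end{pmatrix}$, which is nilpotent, so the semisimplicity of $X$ itself (not just of $X^2$) forces $D'=0$. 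Finally, the ``core'' block $\begin{pmatrix}0&{\bf1}_r\\A&0\end{pmatrix}$ on $D^r\oplus D^r$ has kernel $\ker A\oplus 0$ while its square has kernel $\ker A\oplus\ker A$; equality of these forces $\ker A=0$, i.e.\ $A\in\GL_r(D)$, after which the core is semisimple iff $A$ is semisimple. This yields the normal form $X(A)$, which is regular precisely when $r=m$ (so no ``$D^{m-r}$'' block survives) and $A$ is regular in $\fg\fl_m(D)$.

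Uniqueness of $A$ up to $\GL_r(D)$-conjugacy, together with the descendant computation, is then a routine centralizer calculation. The subgroup of $P_{B_0}$ preserving the normalized $P$ forces the two ``$r$-blocks'' of $(h_1,h_2)$ to coincide, giving a diagonal copy of $\GL_r(D)_A$, while the $(m-r)$-blocks contribute an unconstrained element of $\FH_{m-r}$; this yields $\FH_X\simeq\GL_r(D)_A\times\FH_{m-r}$ and in particular shows that a second normalization to some $X(A'')$ gives $A''=aAa^{-1}$ for some $a\in\GL_r(D)$. Solving $[X,Y]=0$ for $Y=\begin{pmatrix}0&Y_1\\Y_2&0\end{pmatrix}\in\fs$ reduces to the block relations $BY_2=Y_1P$ and $PY_1=Y_2B$, which (using $A$ invertible) pin down $Y_1=\mathrm{diag}(\alpha,\delta)$ and $Y_2=\mathrm{diag}(A\alpha,\delta')$ with $\alpha\in\fg\fl_r(D)_A$ and $(\delta,\delta')\in\fg\fl_{m-r}(D)^{\oplus2}=\fs_{m-r}$, giving $\fs_X\simeq\fg\fl_r(D)_A\times\fs_{m-r}$.

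I expect the main obstacle to lie in the second step: the constraints coming from $X^2$-semisimplicity are only block-triangular in nature and leave open the possibility of a nonzero $D'$ and a non-invertible $A$; one must use the finer information from semisimplicity of $X$ itself (equivalently $\ker X=\ker X^2$) to rule these out. This is the analogue, in the Lie algebra setting, of the role played in the proof of Proposition \ref{prop. descendant 1 group} by the $\FG^\iota$-relations.
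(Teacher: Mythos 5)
Your proof is correct and is essentially the argument the paper intends: the paper simply cites \cite[Propositions 2.1, 2.2]{jr} and asserts that the proof carries over to general $D$, and your steps (rank-normalizing $B$, using semisimplicity of $BP$, $PB$ and of $X$ itself to force $B'=C'=D'=0$ and $A\in\GL_r(D)$ semisimple, then the centralizer computations for $\FH_X$ and $\fs_X$) are exactly that argument written out, paralleling the group-case proof of Proposition \ref{prop. descendant 1 group}. The one point to make explicit is that ``semisimple'' here means the $H$-orbit of $X$ is closed (the convention of \S\ref{section. notation}), so you should record (as the paper does in the group case via \cite[Proposition 7.2.1]{ag1}) that such an $X$ is also semisimple as an element of $\fg\fl_{2m}(D)$, since that is the property ($\ker X=\ker X^2$, semisimple restrictions to invariant subspaces) your argument actually uses.
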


\begin{proof}
This proposition was proved in \cite[Proposition 2.1, Proposition
2.2]{jr} in the case $D=F$. The proof is simpler than that of
Proposition \ref{prop. descendant 1 group} and can be applied to our
more general situation directly. We leave the details to the reader.
\end{proof}

\paragraph{Matching between the orbits}
Now we give a description for the matching between $H$-semisimple
orbits in $S$ or $\fs(F)$ and $H'$-semisimple orbits in $S'$ or
$\fs'(F)$. Let $L$ be a field extension of $F$ with degree $d$
contained in $D$. Since $D$ is a $d$-dimensional $L$-vector space,
we can obtain an embedding $D\incl\fg\fl_d(L)$. This induces
embeddings $(G,H)\incl(\FG'(L),\FH'(L))$, $S\incl \FG'(L)/\FH'(L)$
and $\fs(F)\incl \fs'(L)$. We identify $\FG'(L)/\FH'(L)$ with its
image in $\FG'^\iota(L)$.

\begin{prop}\label{prop. match orbits}
\begin{enumerate}
\item For each semisimple element $x$ of $S$, there
exists $h\in\FH'(L)$ such that $hxh^{-1}$ belongs to $S'$. This
establishes an injection from the set of $H$-semisimple orbits in
$S$ into the set of $H'$-semisimple orbits in $S'$. This injection
carries the orbit of $x(A,m_1,m_2)$ in $S$ to the orbit of
$x(B,m_1d,m_2d)$ in $S'$ such that $A\in\fg\fl_{m-m_1-m_2}(D)$ and
$B\in\fg\fl_{(m-m_1-m_2)d}(F)$ have the same characteristic
polynomial.
\item For each semisimple element $X$ of $\fs(F)$, there
exists $h\in\FH'(L)$ such that $hXh^{-1}$ belongs to $\fs'(F)$. This
establishes an injection from the set of $H$-semisimple orbits in
$\fs(F)$ into the set of $H'$-semisimple orbits in $\fs'(F)$. This
injection carries the orbit of $X(A)$ in $\fs(F)$ to the orbit of
$X(B)$ in $\fs'(F)$ such that $A\in\GL_{r}(D)$ and $B\in\GL_{rd}(F)$
have the same characteristic polynomial.
\end{enumerate}
\end{prop}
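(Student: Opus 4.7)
The plan is to use the classifications of semisimple orbits from Propositions~\ref{prop. descendant 1 group} and~\ref{prop. descendant 1 lie}, together with the embedding $D \hookrightarrow \fg\fl_d(L)$ coming from a maximal subfield $L \subset D$, to reduce the matching problem to conjugacy of semisimple matrices. A crucial preliminary observation is that the element $\epsilon = \diag({\bf 1}_m, -{\bf 1}_m) \in \FG(F)$ defining $\theta$ maps under the embedding to $\diag({\bf 1}_{md}, -{\bf 1}_{md})$, i.e., to the element defining $\theta'$ on $\FG'(L)$. Consequently $\theta'|_{\FG} = \theta$, so $\FH \hookrightarrow \FH'(L)$, and the embeddings $S \hookrightarrow \FG'(L)/\FH'(L)$ and $\fs(F) \hookrightarrow \fs'(L)$ intertwine the $H$- and $\FH'(L)$-actions.

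For part (1), let $x \in S$ be semisimple. By Proposition~\ref{prop. descendant 1 group} we may assume $x = x(A, m_1, m_2)$ with $A \in \fg\fl_r(D)$ semisimple without eigenvalues $\pm 1$ and $r = m - m_1 - m_2$. Under the embedding, $x$ becomes the element $x(A, m_1 d, m_2 d) \in \FG'(L)/\FH'(L)$, with $A$ now regarded as an element of $\fg\fl_{rd}(L)$. The characteristic polynomial of this image of $A$ in $\fg\fl_{rd}(L)$ coincides with the reduced characteristic polynomial of the original $A \in \fg\fl_r(D)$; it lies in $F[t]$ and has no roots $\pm 1$. Choose any semisimple $B \in \fg\fl_{rd}(F)$ with this same characteristic polynomial (for instance a block-diagonal assembly of companion matrices of its distinct irreducible factors). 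Then $A$ and $B$ are both semisimple in $\fg\fl_{rd}(L)$ with equal characteristic polynomials, hence are $\GL_{rd}(L)$-conjugate; the uniqueness clause of Proposition~\ref{prop. descendant 1 group} applied over $L$ then produces $h \in \FH'(L)$ with $hxh^{-1} = x(B, m_1 d, m_2 d) \in S'$. Any two admissible choices of $B$ are $\GL_{rd}(F)$-conjugate (semisimple with the same characteristic polynomial), so this construction descends to a well-defined map on orbits.

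Injectivity is verified by reading the argument backwards: if $x(A_1, m_1, m_2)$ and $x(A_2, m'_1, m'_2)$ are sent to the same $H'$-orbit in $S'$, then the uniqueness in Proposition~\ref{prop. descendant 1 group} over $F$ gives $m_j = m'_j$ and forces the matched $B_1, B_2 \in \fg\fl_{rd}(F)$ to be $\GL_{rd}(F)$-conjugate, whence $A_1$ and $A_2$ have identical reduced characteristic polynomials; since semisimple elements of $\GL_r(D)$ are classified up to $\GL_r(D)$-conjugacy by their reduced characteristic polynomials, $A_1$ and $A_2$ are $\GL_r(D)$-conjugate, and the corresponding $H$-orbits in $S$ agree. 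Part (2) is proved by exactly the same strategy, using Proposition~\ref{prop. descendant 1 lie} in place of Proposition~\ref{prop. descendant 1 group} and matching $X(A)$ to $X(B)$ with $A \in \GL_r(D)$ and $B \in \GL_{rd}(F)$ of equal characteristic polynomial; the absence of the $\pm 1$ restriction makes the reduction even cleaner. The one input requiring care is the classification of $\GL_r(D)$-conjugacy classes of semisimple elements by their reduced characteristic polynomials; this follows from decomposing a semisimple $D \otimes_F F[t]$-module into isotypic components indexed by irreducible monic polynomials $p \in F[t]$, but is the nontrivial piece of the argument.
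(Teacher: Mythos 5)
Your proof is correct and follows essentially the same route as the paper: reduce to the normal forms $x(A,m_1,m_2)$ and $X(A)$, view $A$ inside $\fg\fl_{rd}(L)$ via the maximal subfield $L\subset D$, and conjugate it to a matrix $B\in\fg\fl_{rd}(F)$ with the same (reduced) characteristic polynomial by a block-diagonal element of $\FH'(L)$ — which is exactly the explicit $h$ the paper writes down, so the step you attribute to the ``uniqueness clause'' of Proposition~\ref{prop. descendant 1 group} is really just this easy direct construction. Your added verifications (compatibility of $\theta$ with $\theta'$ under the embedding, well-definedness, and injectivity via the classification of semisimple $\GL_r(D)$-conjugacy classes by reduced characteristic polynomials) are correct and merely spell out what the paper leaves implicit.
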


\begin{proof}
We only prove the matching between the orbits in $\fs(F)$ and
$\fs'(F)$. The proof of the matching between the orbits in $S$ and
$S'$ is similar. It is harmless to assume that $x$ is of the form
$X(A)$ with $A\in\GL_r(D)$. We view $A$ as an element in
$\GL_{rd}(L)$. Since the coefficients of the characteristic
polynomial of $A$ are in $F$, there exists $h_0\in\GL_{rd}(L)$ such
that $B=h_0^{-1}Ah_0$ is in $\GL_{rd}(F)$. Let
$h=\begin{pmatrix}h_0&&&\\&{\bf1}_{(m-r)d}&&\\&&h_0&\\&&&{\bf
1}_{(m-r)d}\end{pmatrix}$. Then $h^{-1}\cdot X(A)\cdot h=X(B)$.
\end{proof}

\begin{defn}
We say that $x\in S_\ss$ (resp. $X\in\fs_\ss(F)$) matches $y\in
S'_\ss$ (resp. $Y\in\fs'_\ss(F)$), and write $x\leftrightarrow y$
(resp. $X\leftrightarrow Y$), if the above map sends the orbit of
$x$ (resp. $X$) to the orbit of $y$ (resp. $Y$).
\end{defn}

Now we discuss some properties of the above matching. These
properties will be used in Section \ref{section. local orbital
integrals}.

\begin{remark}\label{rem. matching orbits}
For a regular semsimple element
$Y=\begin{pmatrix}0&A\\B&0\end{pmatrix}\in\fs'_\rs(F)$, suppose we
wish to know whether there exists $X\in\fs_\rs(F)$ such that
$X\leftrightarrow Y$. Note that $Y$ is $H'$-conjugate to $X(AB)$. It
is well known that there exists $C\in\GL_m(D)$ such that $C$ and
$AB$ have the same characteristic polynomial if and only if the
degree of every irreducible factor (over $F$) of the characteristic
polynomial of $AB$ can be divided by $d$. Hence there exists
$X\in\fs_\rs(F)$ such that $X\leftrightarrow Y$ if and only if the
degree of every irreducible factor (over $F$) of the characteristic
polynomial of $AB$ is divisible by $d$.
\end{remark}

\begin{remark}\label{rem. matching descendants}
Suppose that $x\in S_\ss$ and $y\in S'_\ss$ match. We want to
compare $(\FH_x,\fs_x)$ with $(\FH'_y,\fs'_y)$. It is harmless to
assume that $x=x(A,m_1,m_2)$ and $y=x(B,m_1d,m_2d)$. Then, according
to Proposition \ref{prop. descendant 1 group}
$$(\FH_x,\fs_x)\simeq\left(\GL_r(D)_A,\fg\fl_r(D)_A\right)\times(\FH_{m_1},\fs_{m_1})
\times(\FH_{m_2},\fs_{m_2}),$$ and
$$(\FH'_y,\fs'_y)\simeq\left(\GL_{rd,B},\fg\fl_{rd,B}\right)\times(\FH'_{m_1d},\fs'_{m_1d})
\times(\FH'_{m_2d},\fs'_{m_2d}).$$ Note that
$\left(\GL_r(D)_A,\fg\fl_r(D)_A\right)$ is an inner form of
$\left(\GL_{rd,B},\fg\fl_{rd,B}\right)$. Also note that the other
factors are related in a similar manner as $(\FH,\fs)$ and
$(\FH',\fs')$ are. For $X\in\fs_\ss(F)$ and $Y\in\fs'_\ss(F)$ such
that $X\leftrightarrow Y$, according to Proposition \ref{prop.
descendant 1 lie}, the descendants $(\FH_X,\fs_X)$ and
$(\FH'_Y,\fs'_Y)$ satisfy a similar relation as above.
\end{remark}

\begin{remark}\label{matching stabilizer}
Suppose that $x$ in $S_\rs$ (resp. $\fs_\rs(F)$) and $y$ in $S'_\rs$
(resp. $\fs'_\rs(F)$) match. By Propositions \ref{prop. descendant 1
group}, \ref{prop. descendant 1 lie} and \ref{prop. match orbits},
we have $$\FH_x\simeq\FH'_y.$$
\end{remark}

\begin{remark}\label{rem. matching cartan}
Recall that a Cartan subspace of $\fs$ is by definition a maximal
abelian (with respect to the Lie bracket on $\fg$) subspace
consisting of semisimple elements. An element $X\in\fs$ is regular
semisimple if the centralizer of $X$ in $\fs$ is a Cartan subspace
(cf. \cite[page 471]{Vi}). For a Cartan subspace $\fc$, we denote by
$\fc_\reg(F)$ the subset of regular elements in $\fc(F)$. For a
Cartan subspace $\fc$ of $\fs$ and a Cartan subspace $\fc'$ of
$\fs'$, we say that $\fc$ and $\fc'$ match and write
$\fc\leftrightarrow\fc'$ if there exist $X\in\fc_\reg(F)$ and
$Y\in\fc'_\reg(F)$ such that $X\leftrightarrow Y$. Note that if
$\fc\leftrightarrow\fc'$, there is an isomorphism
$\varphi_\fc:\fc\ra\fc'$ defined over $F$ such that
$X\leftrightarrow\varphi_\fc(X)$ for any $X\in\fc_\reg(F)$. To see
this, suppose that $X\in\fc_\reg(F)$ and $Y\in\fc'_\reg(F)$ match.
We may assume that $X=X(A)$ and $Y=Y(B)$. Then we have
$$\fc(F)=\fs_X(F)=\left\{\begin{pmatrix}0&C\\AC&0\end{pmatrix};C\in
\fg\fl_m(D),\ AC=CA\right\}\simeq\gl_{m}(D)_A.$$  We also have
$$\fc'(F)=\fs'_Y(F)=\left\{\begin{pmatrix}0&D\\BD&0\end{pmatrix};D\in
\fg\fl_n,\ BD=DB\right\}\simeq\fg\fl_{n,B}.$$ Since $A$ and $B$ are
regular semisimple and have the same characteristic polynomial,
there is an isomorphism $\varphi:\fg\fl_m(D)_A\ra\fg\fl_{n,B}$ over
$F$ such that $\varphi(A)=B$. The isomorphism $\varphi_c$ can be
obtained from $\varphi$.
\end{remark}

\paragraph{Smooth transfer}
Now we can introduce the notion of smooth transfer and state the
main theorem of the paper. We first fix Haar measures on $H$ and
$H'$, and fix a Haar measure on $H'_y$ for each $y$ in $S'_\rs$ or
$\fs'_\rs(F)$. We may and do assume that for $y_1,y_2$ in $S'_\rs$
or $\fs'_\rs(F)$ that lie in the same $H'$-orbit, the Haar measures
on $H'_{y_1}$ and $H'_{y_2}$ are compatible in the obvious sense.
For any $x$ in $S_\rs$ or $\fs_\rs(F)$, choose any $y$ in $S'_\rs$
or $\fs'_\rs(F)$ respectively so that $x\leftrightarrow y$. Then
$H_x\simeq H'_y$. We choose the Haar measure on $H_x$ compatible
with that on $H'_y$.

\begin{defn}
For $x\in S_\rs$ (resp. $x\in\fs_\rs(F)$), and $f\in\CC_c^\infty(S)$
(resp. $f\in\CC_c^\infty(\fs(F))$), we define the orbital integral
of $f$ at $x$ to be $$O(x,f)=\int_{H_x\bs H}f(h^{-1}xh)\ \d h,$$
which is convergent since any semisimple orbit is closed.
\end{defn}

\begin{defn}\label{defn. transfer}
For $f\in\CC_c^\infty(S)$ (resp. $\CC_c^\infty(\fs(F))$), we say
that $f'\in\CC_c^\infty(S')$ (resp. $\CC_c^\infty(\fs'(F))$) is a
smooth transfer of $f$ if for each $y\in S'_\rs$ (resp.
$y\in\fs'_\rs(F)$)
\begin{equation}\label{equation. condition of orbital integrals}
\begin{array}{lll}O(y,f')=\left\{\begin{array}{ll}
\begin{aligned}O(x,f),&\quad \textrm{if there exists } x\in S_\rs
\textrm{ (resp. $x\in\fs_\rs(F)$) such that }x\leftrightarrow y,\\
0,&\quad \textrm{otherwise}.
\end{aligned}
\end{array}\right.
\end{array}\end{equation}
Sometimes we will write transfer instead of smooth transfer for
short. If $f'$ is a transfer of $f$, we write $f\leftrightarrow f'$
for simplicity.
\end{defn}

\begin{remark}\label{rem. converse smooth transfer}
Conversely, for $f'\in\CC_c^\infty(S')$ (resp.
$f'\in\CC_c^\infty(\fs'(F))$) satisfying the following condition
\begin{equation}\label{equation. condition}
O(y,f')=0\ \textrm{if there does not exist $x$ in $S_\rs$ (resp.
$\fs_\rs(F)$) such that $x\leftrightarrow y$},
\end{equation} we say that $f\in\CC_c^\infty(S)$
(resp. $f\in\CC_c^\infty(\fs(F))$) is a smooth transfer of $f'$ if
for each $x\in S_\rs$ (resp. $x\in\fs_\rs(F)$)
$$O(x,f)=O(y,f'),$$ where $y$ in $S'_\rs$ (resp. $\fs'_\rs(F)$) is
such that $x\leftrightarrow y$.
\end{remark}

\begin{remark}\label{rem. transfer on descendant}
For semisimple $x\in S$ (resp. $x\in\fs(F)$) and semisimple $y\in
S'$ (resp. $y\in\fs'(F)$) such that $x\leftrightarrow y$, we can
also define smooth transfer from $\CC_c^\infty(\fs_x(F))$ to
$\CC_c^\infty(\fs'_y(F))$ determined by the orbital integrals with
respect to the action of $H_x$ on $\fs_x(F)$ and the action of
$H'_y$ on $\fs'_y(F)$. According to Remark \ref{rem. matching
descendants}, there are two types of smooth transfer to consider.
The first type is associated to $(\FH_{m'},\fs_{m'})$ and
$(\FH'_{m'd},\fs'_{m'd})$ with $m'\leq m$. The second type is
associated to $(\GL_r(D)_A,\fg\fl_r(D)_A)$ and its inner form
$(\GL_{rd,B},\fg\fl_{rd,B})$. In this case, the orbital integrals
are with respect to the adjoint action and the existence of smooth
transfer is known (cf. \cite{wa97}).
\end{remark}

Our main theorem on the smooth transfer is the following.

\begin{thm}\label{thm. main1}
For each $f\in\CC_c^\infty(S)$, there exists $f'\in\CC_c^\infty(S')$
that is a smooth transfer of $f$.
\end{thm}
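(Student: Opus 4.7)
The plan is to follow the strategy of Zhang \cite{zh1}: reduce the group-level assertion to the Lie algebra by semisimple descent, then reduce the Lie algebra statement to the commutativity of Fourier transform with smooth transfer (Theorem \ref{thm. fourier}), proceeding by induction on $m$. The base case $m = 0$ is vacuous.

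\emph{Descent on $S$.} As remarked just before \eqref{equation. local orbital 1}, the question is equivalent to smooth transfer between $\CC_c^\infty(S)$ and $\CC_c^\infty(S')$. For each $x \in S_\ss$, a Cayley-type slice provides an $H_x$-equivariant identification of an $H$-invariant neighborhood of $H\cdot x$ with a neighborhood of $0 \in \fs_x(F)$, matching orbital integrals at nearby regular semisimple points; the compatible slice on $S'$ at the matching point $y$ exists because $\FH_x \simeq \FH'_y$ (Remark \ref{matching stabilizer}). A partition-of-unity argument, using that the image of $\Supp(f)$ in the categorical quotient meets only finitely many semisimple orbits, reduces the problem to producing transfers from $\CC_c^\infty(\fs_x(F))$ to $\CC_c^\infty(\fs'_y(F))$ for functions supported near $0$, for each matching pair $(x, y)$. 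By Remark \ref{rem. matching descendants}, the descendant splits as $(\GL_r(D)_A, \fg\fl_r(D)_A) \times (\FH_{m_1}, \fs_{m_1}) \times (\FH_{m_2}, \fs_{m_2})$ with $m = r + m_1 + m_2$; Waldspurger's result \cite{wa97} handles the first factor, and unless $(r, m_1, m_2) \in \{(0, m, 0), (0, 0, m)\}$ the inductive hypothesis covers the other two. The two exceptional cases correspond to $x = \pm I$, and the Cayley map converts them into the task of constructing smooth transfer from $\CC_c^\infty(\fs(F))$ to $\CC_c^\infty(\fs'(F))$ for functions supported arbitrarily close to $0 \in \fs(F)$.

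\emph{Descent on $\fs(F)$ and Fourier duality at $0$.} For nonzero $X \in \fs_\ss(F)$, Proposition \ref{prop. descendant 1 lie} gives a descendant $(\GL_r(D)_A, \fg\fl_r(D)_A) \times (\FH_{m-r}, \fs_{m-r})$ with $r \ge 1$, so $m - r < m$ and both factors are handled as above. A partition of unity then yields smooth transfer for every $f \in \CC_c^\infty(\fs(F))$ supported in the complement of any fixed neighborhood of $0$. What remains is to transfer functions supported arbitrarily near $0$; this is where Theorem \ref{thm. fourier} enters. Writing $\widehat{f} = \phi_0 + \phi_1$ with $\phi_0 \in \CC_c^\infty(\fs(F))$ and $\phi_1 \in \CS(\fs(F))$ supported away from $0$, the previous step transfers $\phi_1$, and the relation $f \leftrightarrow f' \Rightarrow \widehat{f} \leftrightarrow c\,\widehat{f'}$ asserted by Theorem \ref{thm. fourier} converts this, via an iterative argument in the style of \cite{wa97} that interchanges behavior at $0$ and at infinity through Fourier duality, into the desired transfer of $f$.

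The main obstacle is Theorem \ref{thm. fourier} itself. In \cite{zh1} a partial Fourier transform enables a clean internal induction; no such tool is available here, so Theorem \ref{thm. fourier} must be proved by a global method in the spirit of \cite{wa97}, which is the content of the following section. Granting it, the combination of group and Lie algebra descent, Waldspurger's transfer between inner forms of $\GL$, and induction on $m$ yields Theorem \ref{thm. main1}.
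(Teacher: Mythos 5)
Your group-level reduction (semisimple descent via the Luna slice theorem, partition of unity over the finitely many relevant closed orbits, Waldspurger's transfer \cite{wa97} for the inner-form factor $(\GL_r(D)_A,\fg\fl_r(D)_A)$, induction on $m$, with $x=\pm1$ producing the Lie algebra problem at the same $m$) is exactly the paper's reduction of Theorem \ref{thm. main1} to Theorem \ref{thm. main2}. The gap is in your final paragraph, i.e.\ in how you pass from descent at nonzero semisimple elements to all of $\CC_c^\infty(\fs(F))$. First, descent at nonzero semisimple $X$ only yields transfer for functions supported away from the \emph{null-cone} $\CN$ of $\fs(F)$, not for all functions supported away from $0$: the neighborhoods furnished by the slice argument are saturated (preimages of open sets in the categorical quotient $\fs/\FH$), and a function concentrated near a nonzero nilpotent element has image in the quotient meeting the image of $0$, so it is not covered by any such neighborhood of a nonzero semisimple orbit. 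Hence the residual problem lives near the positive-dimensional set $\CN$, not near the single point $0$. Second, your proposed way of finishing --- split $\wh{f}$ into a piece supported away from $0$ and a piece supported near $0$ and ``iterate through Fourier duality'' --- is circular as stated: the leftover piece is of exactly the same kind as the function you started with, nothing forces the iteration to terminate, and the genuine potential obstruction is an $H$-invariant distribution whose support and whose Fourier transform's support both lie in $\CN$; no purely formal duality argument can exclude this.

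What the paper does instead (following \cite{wa97} and \cite{zh1}) is to invoke the fact, established in \S\ref{section. multiplicity one}, that the pair $(\FG,\FH)$ is of negative defect, hence \emph{special} in the sense of \cite[Proposition 7.3.7]{ag1}: any $H$-invariant distribution $T$ on $\fs(F)$ with $\Supp(T)\subset\CN$ and $\Supp(\wh{T})\subset\CN$ vanishes. This is precisely what gives the decomposition $f=f_0+f_1+\wh{f_2}$ with $f_0\in\CC_0$ (the common kernel of all $H$-invariant distributions, which is transferred by $0$) and $f_1,f_2\in\CC_c^\infty(\fs(F)-\CN)$. Then $f_1$ is handled by descent plus the inductive hypothesis, and $\wh{f_2}$ by transferring $f_2$ the same way and applying Theorem \ref{thm. fourier} (dividing by the constant $c$). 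Your sketch omits both the speciality input and this decomposition, and without them the argument does not close; with them, the rest of your outline agrees with the paper.
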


Showing the existence of smooth transfer essentially is a local
issue. Via the Luna Slice Theorem and descent of orbital integrals,
we can reduce to proving the existence of smooth transfer between
the descendants $(\FH_x,\fs_x(F))$ and $(\FH'_y,\fs'_y(F))$ for each
semisimple $x\in S$ and $y\in S'$ such that $x\leftrightarrow y$.
According to Remark \ref{rem. transfer on descendant}, we reduce to
proving the following Lie algebra version of smooth transfer. We
refer the reader to \cite[\S3]{zh1} or \cite[\S5.3]{zh} for more
details of such reduction steps. The arguments there can be applied
for our situation without modification.

\begin{thm}\label{thm. main2}
For each $f\in\CC_c^\infty(\fs(F))$, there exists
$f'\in\CC_c^\infty(\fs'(F))$ that is a smooth transfer of $f$.
\end{thm}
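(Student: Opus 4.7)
The plan is to derive Theorem \ref{thm. main2} from Theorem \ref{thm. fourier} by induction on $m$. The base case $m=0$ is vacuous. Let $\sI \subset \CC_c^\infty(\fs(F))$ denote the subspace of functions admitting a smooth transfer to $\CC_c^\infty(\fs'(F))$; the goal is to prove $\sI = \CC_c^\infty(\fs(F))$.

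\emph{Descent away from the nilpotent cone.} Let $X \in \fs(F)$ be a nonzero semisimple element, so that $r \geq 1$ in the normal form $X(A)$ of Proposition \ref{prop. descendant 1 lie}. For any $f \in \CC_c^\infty(\fs(F))$ supported in a small $H$-invariant tubular neighborhood of $H \cdot X$, the Luna Slice Theorem together with the standard descent of orbital integrals (compare \cite[\S3]{zh1}) expresses the orbital integrals of $f$ near $H \cdot X$ through an auxiliary $f^\#_X \in \CC_c^\infty(\fs_X(F))$ supported near $0$. By Remark \ref{rem. matching descendants}, the descendant $(\FH_X, \fs_X)$ is a product of a pair $(\FH_{m-r}, \fs_{m-r})$ of our type with $m - r < m$, and an inner-form endoscopic factor $(\GL_r(D)_A, \fg\fl_r(D)_A) \leftrightarrow (\GL_{rd, B}, \fg\fl_{rd, B})$. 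The inductive hypothesis handles the first factor; Waldspurger's theorem \cite{wa97} on endoscopic transfer between inner forms handles the second. Repackaging the resulting transfer through Luna on the $(\FG', \FH')$-side yields $f' \in \CC_c^\infty(\fs'(F))$ matching $f$. A partition-of-unity argument on the categorical quotient $\fs(F)/\!\!/H$ then places every $f$ whose support is disjoint from the nilpotent cone $\CN$ inside $\sI$.

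\emph{Fourier reduction near $\CN$.} It remains to handle $f \in \CC_c^\infty(\fs(F))$ supported in an arbitrarily small neighborhood of $\CN$. The Fourier transform $\hat f$ is a Schwartz function, and via a partition of unity we decompose $\hat f = g_0 + g_\infty$ with $g_0 \in \CC_c^\infty(\fs(F))$ supported close to $\CN$ and $g_\infty \in \CC_c^\infty(\fs(F))$ supported in a compact set bounded away from $\CN$. By the previous step, $g_\infty \in \sI$; by Theorem \ref{thm. fourier}, the Fourier transform carries $\sI$ into $\sI$ (up to the explicit Weil-index constant), so $\hat g_\infty \in \sI$. Fourier inversion then reduces the problem to showing $\hat g_0 \in \sI$ for $g_0$ with arbitrarily small support near $\CN$. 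Iterating this Fourier-and-descent exchange, any obstruction must be concentrated in the space of $H$-invariant distributions supported on $\CN$, and the analogues of Howe's finiteness and Harish-Chandra's density on the symmetric space $\fs(F)$, established in \S\ref{section. local orbital integrals}, force this residual obstruction to vanish.

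\emph{Main obstacle.} The genuine difficulty is the nilpotent case. The scheme of \cite{zh1} uses a partial Fourier transform to make the induction near $\CN$ self-contained, but no such partial Fourier transform is available in our symmetric pair, as the authors acknowledge. Consequently, the whole argument rests on Theorem \ref{thm. fourier}, and making the Fourier iteration above converge rigorously requires the full harmonic-analytic machinery on $\fs(F)$ developed in \S\ref{section. local orbital integrals}. Verifying that this machinery actually controls the residual distribution at $\CN$ is the technical heart of the proof.
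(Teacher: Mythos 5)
Your first step (descent away from the nilpotent cone via the Luna Slice Theorem, the inductive hypothesis for the factor $(\FH_{m-r},\fs_{m-r})$, and Waldspurger's endoscopic transfer \cite{wa97} for the inner-form factor) is exactly the paper's argument, and your observation that Theorem \ref{thm. fourier} makes $\sI$ stable under Fourier transform is also correct. The gap is in how you finish at the nilpotent cone. Your ``Fourier-and-descent iteration'' does not terminate: after writing $\wh f=g_0+g_\infty$ you are left with transferring $\wh{g_0}$ where $g_0$ is again supported near $\CN$, which is a problem of exactly the same shape as the one you started with, and nothing in your scheme makes it smaller. Moreover, the ingredient you invoke to kill the ``residual obstruction'' --- the analogues of Howe's finiteness and density results from \S\ref{section. local orbital integrals} --- is the wrong tool: in the paper those results are used to prove representability of $\wh{I}(X,\cdot)$ by a locally integrable function (Theorem \ref{thm. representability}), and they do not by themselves say anything about $H$-invariant distributions concentrated on $\CN$.

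What actually closes the argument is the \emph{speciality} of the pair $(\FG,\FH)$, which follows from the negative defect property established in \S\ref{section. multiplicity one} via \cite[Proposition 7.3.7]{ag1}: any $H$-invariant distribution $T$ on $\fs(F)$ with $\Supp(T)\subset\CN$ and $\Supp(\wh T)\subset\CN$ vanishes. This yields (by a one-step duality argument, not an iteration) the decomposition $f=f_0+f_1+\wh{f_2}$ with $f_0\in\CC_0=\bigcap_T\ker(T)$ ($T$ running over $H$-invariant distributions, so $f_0$ has vanishing orbital integrals and $0$ is a transfer of it) and $f_1,f_2\in\CC_c^\infty(\fs(F)-\CN)$; then $f_1$ is handled by your descent step and $\wh{f_2}$ by Theorem \ref{thm. fourier}. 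Equivalently, in your dual formulation: a functional vanishing on $\sI$ is automatically $H$-invariant (since $f-f^h\in\sI$), is supported on $\CN$, and has Fourier transform supported on $\CN$, hence is zero by speciality --- but that vanishing statement is precisely what you never invoke, and without it your proof does not close.
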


To prove Theorem \ref{thm. main2}, the following theorem, which
roughly says that the Fourier transform commutes with smooth
transfer, is the key input.

\begin{thm}\label{thm. fourier}
There exists a nonzero constant $c\in\BC$ satisfying that: if
$f'\in\CC_c^\infty(\fs'(F))$ is a transfer of
$f\in\CC_c^\infty(\fs(F))$, then $\wh{f'}$ is a transfer of
$c\wh{f}$.
\end{thm}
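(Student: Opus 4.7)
\textbf{Proof proposal for Theorem \ref{thm. fourier}.}

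The plan is to adopt the global method of Waldspurger \cite{wa97}, following the approach refined in \cite{zh1} and \cite{zh}. I proceed by induction on $m$, the base case being settled by Waldspurger's original endoscopic transfer since it reduces to the adjoint situation recalled in Remark \ref{rem. transfer on descendant}.

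First I would reduce to a statement concentrated near the origin $0\in\fs(F)$. By the Luna slice theorem together with the descent computed in Proposition \ref{prop. descendant 1 lie} and Remark \ref{rem. matching descendants}, in a neighborhood of any nonzero semisimple $X\in\fs_\ss(F)$ the symmetric pair $(\FH_X,\fs_X)$ splits as a product whose factors are either of the same type $(\FH_{m'},\fs_{m'})$ with $m'<m$, or of the adjoint form $(\GL_r(D)_A,\fg\fl_r(D)_A)$ versus its split inner form, for which the Fourier-transfer compatibility is Waldspurger's theorem. Combining the induction hypothesis with the fact that the Fourier transform factors transversally under the slice decomposition (a partial Fourier transform along the normal direction composed with orbital integration along the slice), I obtain Theorem \ref{thm. fourier} for test functions $f\in\CC_c^\infty(\fs(F))$ whose orbital-integral support stays away from the nilpotent cone.

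For the remaining content, carried by a neighborhood of $0$, I would globalize: choose a number field $k$ with a distinguished place $v_0$ so that $k_{v_0}=F$ and a central division algebra $\tilde D/k$ with completion $\tilde D_{v_0}\simeq D$, producing global pairs $(\FG,\FH)$ and $(\FG',\FH')$ whose $v_0$-components agree with the given local objects. Construct a global Schwartz function $\Phi\in\CS(\fs(\BA))$ with $\Phi_{v_0}=f$ and components at the remaining places chosen so that (a) at each $v\neq v_0$ a smooth transfer is already known (by the induction hypothesis or by local splitness), producing a partner $\Phi'\in\CS(\fs'(\BA))$, and (b) the orbital supports at all auxiliary places are tight enough that only the desired terms survive. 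Apply Poisson summation on the vector space $\fs(\BA)$, equipped with the self-dual measure relative to $\psi$ and $\pair{\,,\,}$:
\begin{equation*}
\sum_{X\in\fs(k)}\Phi(X)\;=\;\sum_{X\in\fs(k)}\wh{\Phi}(X),
\end{equation*}
and analogously for $\Phi'$. Grouping the two sides by $\FH(k)$-orbits and comparing via the matching of Proposition \ref{prop. match orbits} and the measure identification of Remark \ref{matching stabilizer}, one extracts from the comparison at the distinguished place $v_0$ the relation claimed in Theorem \ref{thm. fourier}, with the constant $c$ accumulating from the Weil indices $\gamma_\psi$ of $\pair{\,,\,}$ at the auxiliary places.

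The main obstacle I expect is the asymmetry spelled out in Remark \ref{rem. matching orbits}: not every regular semisimple orbit in $\fs'$ comes from one in $\fs$, only those whose characteristic polynomial has every irreducible factor of degree divisible by $d$. In Poisson summation on the $\fs$-side every $k$-rational orbit contributes, while on the $\fs'$-side the transfer $\Phi'$ vanishes on orbital integrals at non-matching orbits by Definition \ref{defn. transfer}. One must therefore show globally that non-matching and non-regular contributions on the two sides cancel or reduce to lower-dimensional descendant identities handled by induction. As indicated in the introduction, the absence of a convenient partial Fourier transform rules out the inductive mechanism of \cite[\S4]{zh1}, so one must instead rely on the explicit combinatorial description of matching orbits in Proposition \ref{prop. match orbits}, on the vanishing condition in Definition \ref{defn. transfer}, and on the harmonic-analysis input generalizing \cite{zh} to close the argument.
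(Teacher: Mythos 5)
Your proposal conflates the reduction of Theorem \ref{thm. main2} to Theorem \ref{thm. fourier} with the proof of Theorem \ref{thm. fourier} itself, and in doing so it leans on exactly the tool the paper says does not exist here. The step where you claim the theorem ``for test functions whose orbital-integral support stays away from the nilpotent cone'' by Luna slices, descent and induction on $m$, glued by ``a partial Fourier transform along the normal direction,'' is not available: the Fourier transform on $\fs(F)$ does not localize near a semisimple point, and the absence of a suitable partial Fourier transform is precisely why the inductive mechanism of \cite[\S4]{zh1} is abandoned (you even concede this in your last paragraph, contradicting your second one). In the paper the induction-plus-descent argument is only the (comparatively soft) deduction of Theorem \ref{thm. main2} from Theorem \ref{thm. fourier}; Theorem \ref{thm. fourier} itself is proved by different means. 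In particular, the first half of the statement --- that $\wh{I}(Y,f')=0$ at every regular semisimple $Y\in\fs'_\rs(F)$ with no match in $\fs_\rs(F)$ --- is left in your write-up as an acknowledged open problem (``one must show globally that non-matching contributions cancel''). The paper proves this \emph{locally}: by Theorem \ref{thm. representability} the distribution $\wh{I}(Y_0,\cdot)$ is represented by a kernel $\wh{i}_{Y_0}$, the Weyl integration formula splits $\wh{I}(Y_0,f')$ over Cartan subspaces, the transfer condition kills the Cartan subspaces not matching any from $\fs$, and Proposition \ref{prop. i(X,Y) parabolic} (parabolic descent of the kernel, using that a non-matching $Y_0$ lies in a proper subspace $\fr$ coming from a Levi, cf.\ Remark \ref{rem. matching orbits}) kills the kernel on the matching ones. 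Without this, the statement you are asked to prove is only half addressed.

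For the matching-constant half your globalization-plus-Poisson-summation outline is in the right spirit, but the key inputs are missing and one of your choices is circular. At auxiliary finite places where the global division algebra remains non-split (the paper deliberately imposes such a place $u$ to force all contributing rational orbits to be elliptic, which is what makes the geometric sums converge and collapse), smooth transfer is not ``already known by the induction hypothesis'': the local pair there has the same $m$ and the full transfer at such places is exactly what is being established. The paper sidesteps this by Proposition \ref{prop. local prop}, which constructs, at those places, specific pairs of matching functions supported near a fixed Cartan subspace together with the exact relation $\wh{I}(X^0,\phi_v)=c_v\wh{I}(Y^0,\phi'_v)$ with $c_v=\gamma_\psi(\fh(k_v))\gamma_\psi(\fh'(k_v))^{-1}$; the archimedean components are chosen so that only the single rational orbit of $X^0$ (resp.\ $Y^0$) survives in the Poisson-summed sums, and the product formula for Weil indices then yields $c=\gamma_\psi(\fh(F))^{-1}\gamma_\psi(\fh'(F))$. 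Moreover, even stating that $\wh{I}(\cdot,f)$ is locally constant near $X_0$, which your comparison at the distinguished place implicitly needs, rests on the representability theorem. So the proposal identifies the correct global skeleton but omits the local harmonic-analytic results (Theorem \ref{thm. representability}, Propositions \ref{prop. i(X,Y) parabolic} and \ref{prop. local prop}) that make both halves of the theorem go through, and the route it substitutes for them would not close.
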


\begin{remark}
We now briefly recall why Theorem \ref{thm. fourier} implies Theorem
\ref{thm. main2}. We use induction and assume that Theorem \ref{thm.
main2} holds for functions in $\CC_c^\infty(\fs_{m'}(F))$ for every
$m'<m$. Via the Luna Slice Theorem and descent of orbital integrals
again, we can reduce to proving the existence of smooth transfers on
the descendants for each semisimple $X\in\fs_\ss(F)$ and
$Y\in\fs'_\ss(F)$ such that $X\leftrightarrow Y$. If $X$ is nonzero,
the factor $(\FH_{m'},\fs_{m'})$ in the descendant $(\FH_X,\fs_X)$
satisfies that $m'<m$. Thus, by Remark \ref{rem. transfer on
descendant} and the inductive hypothesis, smooth transfers exist for
functions whose supports are contained in a neighborhood of $X$.
Moreover, this shows the existence of smooth transfer for
$f\in\CC_c^\infty(\fs(F)-\CN)$, where $\CN$ is the null-cone of
$\fs(F)$. We have explained that the symmetric pair $(\FG,\FH)$ is
of negative defect, which implies that $(\FG,\FH)$ is special (cf.
\cite[Proposition 7.3.7]{ag1}). The speciality means the following
statement. If $T$ is an $H$-invariant distribution on $\fs(F)$ such
that $\Supp(T)\subset\CN$ and $\Supp(\wh{T})\subset\CN$, then $T$
must be zero. This fact has the following direct consequence. Let
$\CC_0=\bigcap\limits_{T}\ker(T)$ where $T$ runs over all
$H$-invariant distributions on $\fs(F)$. Then each
$f\in\CC_c^\infty(\fs(F))$ can be written as $f=f_0+f_1+\wh{f_2}$
with $f_0\in\CC_0$ and $f_i\in\CC_c^\infty(\fs(F)-\CN)$ for $i=1,2$.
Therefore, it remains to prove the existence of the smooth transfer
for $\wh{f}$ with $f$ belonging to the space
$\CC_c^\infty(\fs(F)-\CN)$, which is exactly what Theorem \ref{thm.
fourier} shows.
\end{remark}

\begin{remark}
To prove the existence of smooth transfer in the converse direction,
in the sense of Remark \ref{rem. converse smooth transfer}, it
suffices to prove that each $f\in\CC_c^\infty(\fs'(F))$ satisfying
condition (\ref{equation. condition}) can be written as
$f=f_0+f_1+\wh{f_2}$. Here $f_0$ is in $\CC'_0$ which is defined
similarly as $\CC_0$ for $\fs$, and
$f_i\in\CC_c^\infty(\fs'(F)-\CN')$, for $i=1,2$, is also required to
satisfy condition (\ref{equation. condition}) (here $\CN'$ is the
null-cone of $\fs'(F)$). However we do not know how to prove such a
decomposition.
\end{remark}

\section{Local orbital integrals}\label{section. local orbital
integrals}

Let $F$ be a $p$-adic field as before. This section is devoted to
proving Theorem \ref{thm. fourier}. We employ several techniques
used by Waldspurger on endoscopic smooth transfer (cf. \cite{wa95}
and \cite{wa97}). These techniques also involve some more classical
results of Harish-Chandra on harmonic analysis for $p$-adic groups
(cf. \cite{hc1} and \cite{hc}). We also establish various analogous
results for the $p$-adic symmetric spaces considered here. A large
part of this section can be viewed as a generalization of the
results in \cite{zh}.

\subsection{Preparations}
\paragraph{Inequalities}
Fix a nonzero $X_0$ in the null-cone $\CN$ of $\fs(F)$. Let
$(X_0,{\bf d},Y_0)$ be an $\fs\fl_2$-triple with ${\bf d}\in\fh(F)$
and $Y_0\in\CN$. We set ${\bf r}=\dim_F\fs_{Y_0}$ and ${\bf
m}=\frac{1}{2}\RTr\left(\ad(-{\bf d})|_{\fs_{Y_0}}\right)$. The
inequalities below are used to bound the orbital integrals for
elements of $\CC_c^\infty(\fs(F))$ along a Cartan subspace of $\fs$.

\begin{prop}\label{prop. inequality for nilpotent}
We have the relations
\begin{enumerate}
\item ${\bf r}\geq n$,
\item ${\bf r}+{\bf m}> n^2+\frac{n}{2}$.
\end{enumerate}
\end{prop}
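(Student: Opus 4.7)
My plan is first to reduce to the split case: over $\bar F$ one has $D\otimes_F \bar F\simeq M_d(\bar F)$, so $(\FG,\FH)$ becomes $(\GL_{2n},\GL_n\times\GL_n)$ with its standard involution $\theta$, and since $\mathbf{r}$ and $\mathbf{m}$ are $F$-dimensions preserved under base change I may assume $D=F$, $\fg=\fg\fl_{2n}$, $\fh=\fg\fl_n\oplus\fg\fl_n$, and $\fs$ the off-diagonal block part. I would then exploit the $\fs\fl_2$-module structure on the standard representation $V=F^{2n}$ induced by $(X_0,\mathbf{d},Y_0)$: because $X_0,Y_0\in\fs$ swap the decomposition $V=V_1\oplus V_2$ while $\mathbf{d}\in\fh$ preserves it, $V$ is canonically a $\BZ/2$-graded $\fs\fl_2$-module. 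Writing $n_j^{\pm}$ for the multiplicity of the $j$-dimensional irreducible with top weight vector in $V_1$ (resp.\ $V_2$) and $\alpha_j:=n_j^+-n_j^-$, the condition $\dim V_1=\dim V_2=n$ collapses into the single constraint $\sum_{j\text{ odd}}\alpha_j=0$.

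For (1), the trace form $\langle X,Y\rangle=\tr(XY)$ on $\fg$ makes $\ad Y_0\colon\fh\to\fs$ and $\ad Y_0\colon\fs\to\fh$ mutually negative adjoints, which combined with $\dim\fh=\dim\fs=2n^2$ forces $\dim\fh_{Y_0}=\dim\fs_{Y_0}$. Hence $2\mathbf{r}=\dim\fg_{Y_0}=\sum_i(\lambda_i')^2$, where $\lambda$ is the Jordan type of $Y_0$ in $\fg\fl_{2n}$, and since each positive part of $\lambda'$ is at least $1$ and $\sum\lambda_i'=2n$, one obtains $\sum(\lambda_i')^2\geq 2n$, with strict inequality exactly when $\lambda$ has more than one part. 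This gives (1).

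For (2), I would decompose $\fg=V\otimes V^*$ via Clebsch-Gordan while tracking parities. Using the telescoping identity $\sum_c c=ab$ as $V(c-1)$ runs through the summands of $V(a-1)\otimes V(b-1)$, the resulting direct calculation should yield
$$2\mathbf{m}\;=\;2n^2-\mathbf{r}+\tfrac12\mathcal{C},$$
where after repeatedly invoking $\sum_{j\text{ odd}}\alpha_j=0$ to cancel mixed odd-even cross terms the ``correction'' takes the shape
$$\mathcal{C}\;=\;-\sum_{a,b\text{ odd}}\alpha_a\alpha_b\max(a,b)\;+\;\sum_{a,b\text{ even}}\alpha_a\alpha_b\min(a,b).$$
The target inequality is then equivalent to $2\mathbf{r}+\mathcal{C}>2n$. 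Expanding $\min(a,b)=\sum_{k\geq 1}\mathbf{1}[k\leq a]\mathbf{1}[k\leq b]$ rewrites the even sum as $\sum_k\bigl(\sum_{a\geq k,\,a\text{ even}}\alpha_a\bigr)^2\geq 0$, and writing $\max=(a+b)-\min$ combined with $\sum_{j\text{ odd}}\alpha_j=0$ kills the $(a+b)$-piece and converts the odd sum into an analogous sum of squares, so $\mathcal{C}\geq 0$. To get strictness, split cases: if $\mathcal{C}>0$ it follows from $2\mathbf{r}\geq 2n$; if $\mathcal{C}=0$ then every $\alpha_j$ vanishes, every multiplicity $n_j$ is even, $Y_0$ cannot be a single Jordan block, and so $\sum(\lambda_i')^2>2n$ strictly by the equality case of (1).

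The main obstacle is the parity bookkeeping in the Clebsch-Gordan step: one has to carry the $\BZ/2$-grading through the tensor decomposition carefully and verify that the seemingly asymmetric cross terms really do cancel under the single constraint $\sum_{j\text{ odd}}\alpha_j=0$, collapsing $\mathcal{C}$ into the two nonnegative sums of squares displayed above. Everything else in the argument is formal.
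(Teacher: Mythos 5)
Your proposal is correct in outline, and its first step coincides with the entirety of the paper's own proof: the paper base-changes to a maximal subfield $L\subset D$ of degree $d$, observes that $(\FG,\FH)$ becomes $(\FG',\FH')$ over $L$ and that ${\bf r}$ and ${\bf m}$ are unchanged (since $\fs''_{Y_0}\simeq\fs_{Y_0}\otimes_FL$), and then simply cites Proposition 4.4 of \cite{zh} for the split case. Your reduction to $\bar F$ plays the same role. Where you genuinely diverge is that you then prove the split case from scratch via the $\BZ/2$-graded $\fs\fl_2$-module structure on $V=F^{2n}$. Your part (1) is clean: the trace form makes $\ad Y_0|_{\fh}$ and $\ad Y_0|_{\fs}$ negative adjoints, so $2{\bf r}=\dim\fg_{Y_0}=\sum_i(\lambda_i')^2\geq 2n$, with equality exactly for a single Jordan block. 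For part (2), I checked your identity $2{\bf m}=2n^2-{\bf r}+\tfrac12\CC$ and the displayed form of $\CC$ against several Jordan types and gradings (regular nilpotent, type $(2,2)$ with both gradings, type $(3,1)$), and it holds in each; the conversion of both the even and the odd parts of $\CC$ into sums of squares using $\min(a,b)=\sum_k{\bf 1}[k\leq a]{\bf 1}[k\leq b]$ and the constraint $\sum_{j\ \mathrm{odd}}\alpha_j=0$ is correct, as is the equality-case analysis ($\CC=0$ forces all $\alpha_j=0$, hence all multiplicities even, hence at least two Jordan blocks and strictness in (1)). The one step you have not actually executed is the Clebsch--Gordan derivation of $\CC$ itself, which you flag yourself; this is a finite, if tedious, computation and is the real content of the cited \cite[Proposition 4.4]{zh}. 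What your route buys is a self-contained argument in place of an external citation; what the paper's route buys is brevity, since the split-case computation already exists in the literature.
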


\begin{proof}
Let $L$ be an extension field of $F$ with degree $d$ contained in
$D$. Then
$$(\FG\times_FL,\FH\times_FL )\simeq(\FG'\times_FL,\FH'\times_FL)=:(\FG'',\FH'').$$
Denote by $\fs''$ the ``Lie algebra'' associated to $\FG''/\FH''$.
We can, in a canonical way, view $X_0$ and $Y_0$ as elements of
$\fs''(L)$ and also ${\bf d}$ as an element of $\fh''(L)$
($\fh'':=\Lie(\FH'')$). Let ${\bf r}'=\dim_L\fs''_{Y_0}$ and ${\bf
m}'=\frac{1}{2}\RTr\left(\ad(-{\bf d})|_{\fs''_{Y_0}}\right)$. Since
$\fs''_{Y_0}\simeq\fs_{Y_0}\otimes_FL$, it is not hard to see that
${\bf r}={\bf r}'$ and ${\bf m}={\bf m}'$. Therefore the required
inequalities follow immediately by \cite[Proposition 4.4]{zh}.
\end{proof}

\paragraph{Representability}
With the aid of Proposition \ref{prop. inequality for nilpotent}, we
can generalize all the results in \cite[\S5, \S6, \S7]{zh} when
$d=1$ to the more general case at hand. We will only state the
results and omit the proofs since they are obtained as almost
verbatim reproductions of those in \cite{zh}.

Let $X\in\fs_\rs(F)$ lie in a Cartan subspace $\fc$ of $\fs$. Then
the centralizer $\FT$ of $\fc$ in $\FH$ equals $\FH_X$. Thus $T$ is
a torus by Proposition \ref{prop. descendant 1 lie}. Write
$\ft=\Lie(\FT)$. We define the normalizing factor $|D^\fs(X)|_F$ to
be
$$|\det(\ad(X);\fh/\ft\oplus\fs/\fc)|_F^{\frac{1}{2}},$$ which is
also equal to $|\det(\ad(X);\fg/\fg_X)|_F^{\frac{1}{2}}$. We
consider the normalized orbital integral:
$$I(X,f)=|D^\fs(X)|_F^{\frac{1}{2}}O(X,f),\ \textrm{ for
}f\in\CC_c^\infty(\fs(F)),$$ which is a distribution on $\fs(F)$. We
also consider its Fourier transform:
$$\wh{I}(X,f):=I(X,\wh{f}),\quad\textrm{ for }f\in\CC_c^\infty(\fs(F)).$$

If $X\in\fs_\rs(F)$ and $Y\in\fs'_\rs(F)$ are such that
$X\leftrightarrow Y$, viewed as elements of $\M_{2m\times 2m}(D)$
and $\M_{2n\times 2n}(F)$ respectively, they have the same
characteristic polynomial (cf. Proposition \ref{prop. match
orbits}). Since the normalizing factor is determined by
characteristic polynomial, we see that
$$|D^\fs(X)|_F=|D^{\fs'}(Y)|_F.$$ Hence it
does not matter if we consider the smooth transfer with respect to
the normalized orbital integrals.

The following theorem is a generalization of \cite[Theorem 6.1]{zh}.
Its proof can be copied word for word from that of \cite[Theorem
6.1]{zh}. The ingredients of its proof are the analogues of
parabolic induction and Howe's finiteness theorem for our symmetric
spaces, together with bounds for normalized orbital integrals along
Cartan subspaces of $\fs(F)$.

\begin{thm}\label{thm. representability}
For each $X\in\fs_\rs(F)$, there exists a locally constant
$H$-invariant function $\wh{i}_X$ defined on $\fs_\rs(F)$ which is
locally integrable on $\fs(F)$, such that for any
$f\in\CC_c^\infty(\fs(F))$ we have
$$\wh{I}(X,f)=\int_{\fs_\rs(F)}\wh{i}_X(Y)f(Y)
|D^\fs(Y)|_F^{-1/2}\ \d Y.$$
\end{thm}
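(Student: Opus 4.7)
The plan is to follow the Harish-Chandra paradigm for representability of (Fourier transforms of) orbital integrals, in the symmetric-space form developed by the author in \cite{zh} for the case $d=1$, and to transfer that argument to the present setting. There are three conceptual ingredients that have to be in place: (i) a Howe-type finiteness statement for $H$-invariant distributions on compact-open subsets of $\fs(F)$, controlling the dimension of the span of orbital integrals; (ii) an analog of parabolic induction/descent (Luna slice) that reduces the behavior of $\wh{I}(X,\cdot)$ near a non-regular semisimple point to an analogous problem on a descendant $(\FH_{X_0},\fs_{X_0})$; and (iii) explicit bounds for the normalized orbital integrals $I(X,f)$ as $X$ approaches the null-cone $\CN$ along a Cartan subspace $\fc\subset\fs$.

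Concretely, I would first prove smoothness on the regular locus: combining the finiteness statement (i) with descent (ii) --- whose input is the classification of descendants in Proposition \ref{prop. descendant 1 lie} --- one shows that $f\mapsto \wh{I}(X,f)$ is represented on $\fs_\rs(F)$ by a locally constant $H$-invariant function $\wh{i}_X$. The descendant factors are either of the same symmetric-space form $(\FH_{m'},\fs_{m'})$ with $m'<m$ (to which induction applies) or of the form $(\GL_r(D)_A,\fg\fl_r(D)_A)$ under the adjoint action, where the classical Harish-Chandra/Waldspurger theory (\cite{hc1}, \cite{hc}, \cite{wa95}) directly supplies the required representability. Next, I would establish (iii): along $\fc$, and up to a controlled logarithmic correction, one bounds $|I(X,f)|$ by a power of $|D^\fs(X)|_F$ whose exponent is dictated by the pair $({\bf r},{\bf m})$ attached to any nilpotent $Y_0\in\CN$ via an $\fs\fl_2$-triple. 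Finally, combining these with the Weyl integration formula for $\fs$ --- which decomposes $\int_{\fs(F)}$ as a sum over $H$-classes of Cartan subspaces weighted by $|D^\fs|_F$ --- one sees that the bound is strong enough to force $\wh{i}_X\in\L^1_{\mathrm{loc}}(\fs(F))$ and to represent $\wh{I}(X,\cdot)$ globally.

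The main obstacle is step (iii), i.e.\ obtaining the correct quantitative bounds on normalized orbital integrals near the null-cone. The precise numerical inequalities needed are exactly ${\bf r}\geq n$ and ${\bf r}+{\bf m}>n^2+\tfrac{n}{2}$, supplied by Proposition \ref{prop. inequality for nilpotent}. The key observation that makes this work in the generality of arbitrary $D$ is the base-change identification $(\FG,\FH)\times_F L\simeq (\FG',\FH')\times_F L$ for a degree-$d$ subfield $L\subset D$, under which the pair $({\bf r},{\bf m})$ is preserved, reducing the required bound to the corresponding statement in \cite[Proposition 4.4]{zh}. Once this is in hand, the arguments of \cite[\S5--\S7]{zh} --- Howe's finiteness, parabolic descent along $\fs$, the Weyl integration formula, and the integrability/representability assembly --- transfer essentially verbatim, since the structural description of the descendants of $(\FH,\fs)$ (Proposition \ref{prop. descendant 1 lie}) is formally parallel to the $d=1$ situation treated in \cite{zh}.
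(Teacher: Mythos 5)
Your proposal matches the paper's own treatment: the paper proves this theorem by transporting the argument of \cite[Theorem 6.1]{zh} verbatim, whose ingredients are exactly the ones you list --- Howe-type finiteness and parabolic induction/descent for the symmetric space, plus bounds on normalized orbital integrals along Cartan subspaces, with the needed inequalities ${\bf r}\geq n$ and ${\bf r}+{\bf m}>n^2+\tfrac{n}{2}$ supplied by Proposition \ref{prop. inequality for nilpotent} via base change to a degree-$d$ subfield $L\subset D$ and reduction to \cite[Proposition 4.4]{zh}. Your argument is correct and essentially identical in structure to the paper's.
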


We will need a proposition that shows up in the course of the proof.
Recall that an element $X\in\fs_\rs(F)$ is called elliptic if its
centralizer $\FH_X$ is an elliptic torus. Denote by $\fs_\el(F)$ the
set of elliptic elements in $\fs_\rs(F)$. Suppose that
$X\in\fs_\rs(F)$ is of the form
$\begin{pmatrix}0&{\bf1}_m\\A&0\end{pmatrix}$. Also suppose that $X$
is not elliptic. Then $A\in\GL_m(D)$ is not elliptic in the usual
sense. Then there exists a proper Levi subgroup $\FM_0$ of
$\GL_m(D)$ such that $A\in M_0:=\FM_0(F)$. Set $\fm_0:=\Lie(\FM_0)$.
Identify $\fs^+$ (resp. $\fs^-$) with $\fg\fl_m(D)$, and let
$\fr^+\subset\fs^+$ (resp. $\fr^-\subset\fs^-$) be the subspace that
corresponds to $\fm_0$ under this identification. Finally, set
$\fr=\fr^+\oplus\fr^-$. Then $X$ lies in $\fr(F)$ and is regular
semisimple with respect to the adjoint action of $M=M_0\times M_0$
on $\fr(F)$. Choosing a Haar measure on $M$, we also consider the
orbital integral with respect to the action of $M$ on $\fr(F)$. Note
that $\ft$ is contained in $\fm$ and $\fc$ is contained in $\fr$.
The normalizing factor $|D^\fr(X)|_F$ is defined to be
$$|\det\left(\ad(X);\fm/\ft\oplus\fr/\fc\right)|_F^{\frac{1}{2}}.$$
The normalized orbital integral $I^\fr(X,f')$, for
$f'\in\CC_c^\infty(\fr(F))$, is defined to be
$$|D^\fr(X)|_F^{\frac{1}{2}}\int_{H_X\bs M}f'(m^{-1}Xm)\ \d
m,$$ which is convergent since $X$ is semisimple with respect to the
action of $M$. $I^\fr(X,\cdot)$ is a distribution on $\fr(F)$. We
also consider its Fourier transform
$$\wh{I}^\fr(X,f'):=I^\fr(X,\wh{f'}).$$ Then, with suitable choices
of Haar measures, there is a relation between the orbital integrals
$I(X,\cdot)$ and $I^\fr(X,\cdot)$, the so-called parabolic descent
of orbital integrals,
$$I(X,f)=I^\fr(X,f^{(\fr)}),\ \textrm{ for all
}f\in\CC_c^\infty(\fs(F)).$$ Here $f^{(\fr)}\in\CC_c^\infty(\fr(F))$
is a sort of ``constant term'' of $f$. We refer the reader to
\cite[\S6.1]{zh} for the precise definition. The exact same formula
for $f^{(\fr)}$ as there still works in our situation.

Applying Theorem \ref{thm. representability} to lower rank
situations, we see that there exists a locally constant
$M$-invariant function $\wh{i}^\fr_X$ defined on $\fr_\rs(F)$ which
is locally integrable on $\fr(F)$, such that for any
$f'\in\CC_c^\infty(\fr(F))$ we have
$$\wh{I}^\fr(X,f')=\int_{\fr_\rs(F)}\wh{i}_X^\fr(Y) f'(Y)
|D^\fr(Y)|_F^{-\frac{1}{2}}\ \d Y.$$ Not surprisingly, there is a
relation between $\wh{i}_X$ and $\wh{i}^\fr_X$. The following
formula for $\wh{i}_X$ in terms of $\wh{i}^\fr_X$ will be needed.

\begin{prop}\label{prop. i(X,Y) parabolic} Keep the notations and
assumptions above. We have
$$\wh{i}_X(Y)=\sum_{Y'}\wh{i}_X^\fr(Y'),\quad
Y\in\fs_\rs(F),$$ where $Y'$ runs over a set of representatives for
the finitely many $M$-conjugacy classes of elements in $\fr(F)$
which are $H$-conjugate to $Y$. In particular, if there is no
element in $\fr(F)$ which is $H$-conjugate to $Y$, we have
$$\wh{i}_X(Y)=0.$$
\end{prop}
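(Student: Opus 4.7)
The plan is to combine parabolic descent of orbital integrals with the corresponding descent for Fourier transforms, and then invoke uniqueness of the locally integrable function representing the distribution $\wh{I}(X,\cdot)$. This follows the strategy of \cite{zh} in the $d=1$ case; the task is essentially to check that all the ingredients go through in the present generality, which they will because each one has already been established at full generality elsewhere in this section (e.g.\ Theorem \ref{thm. representability}) or is formal.

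First I would establish the compatibility $(\wh{f})^{(\fr)} = \wh{f^{(\fr)}}$ (up to a constant from the normalization of Haar measures), where the Fourier transform on the right is taken on $\fr(F)$ with respect to the restriction of $\pair{\cdot,\cdot}$. This is formal: $f^{(\fr)}$ is defined by integrating $f$ over the subspace of $\fs$ complementary to $\fr$, and the form $\pair{\cdot,\cdot}$ pairs this complement with itself, so iterated Fubini plus the elementary identity $\int \psi(\pair{u,\cdot})\,\d u = \delta_{\fr}(\cdot)$ gives the claim. Combining with the parabolic descent $I(X,f) = I^\fr(X, f^{(\fr)})$ recalled just before the statement, one obtains
\[
\wh{I}(X,f) = I(X,\wh{f}) = I^\fr(X,(\wh{f})^{(\fr)}) = I^\fr(X, \wh{f^{(\fr)}}) = \wh{I}^\fr(X, f^{(\fr)}).
\]

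Next I would apply Theorem \ref{thm. representability}, used inductively for the lower-rank pair $(M,\fr)$, to the right-hand side, yielding
\[
\wh{I}(X,f) = \int_{\fr_\rs(F)} \wh{i}_X^\fr(Y')\, f^{(\fr)}(Y')\, |D^\fr(Y')|_F^{-1/2}\,\d Y'.
\]
Substituting the explicit formula for $f^{(\fr)}$ as an integral of $f$ along the complement of $\fr$ in $\fs$, using the $M$-invariance of $\wh{i}_X^\fr$, and applying a Weyl-integration change-of-variables argument on $\fr$ and on $\fs$, this rewrites as
\[
\int_{\fs_\rs(F)} g(Y)\, f(Y)\, |D^\fs(Y)|_F^{-1/2}\,\d Y, \qquad g(Y) := \sum_{Y'} \wh{i}_X^\fr(Y'),
\]
where $Y'$ runs over a set of representatives for the $M$-orbits in $\fr(F) \cap (H\cdot Y)$. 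Finiteness of this set of $M$-orbits follows from the standard argument that an $H$-orbit meets $\fr$ in finitely many $M$-orbits, and the set is empty precisely when $H\cdot Y$ does not meet $\fr(F)$. Finally, since Theorem \ref{thm. representability} gives a unique locally integrable function representing $\wh{I}(X,\cdot)$ against $|D^\fs(Y)|_F^{-1/2}\,\d Y$, comparing the two expressions forces $\wh{i}_X(Y) = g(Y)$ almost everywhere on $\fs_\rs(F)$, and both sides being locally constant on $\fs_\rs(F)$ upgrades this to pointwise equality (which also yields the vanishing statement as the case $g \equiv 0$).

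The main obstacle I anticipate is the change-of-variables step, namely verifying cleanly that the integral over $\fr_\rs(F)$ of $\wh{i}_X^\fr(Y') f^{(\fr)}(Y') |D^\fr(Y')|_F^{-1/2}$ equals the integral over $\fs_\rs(F)$ of $g(Y) f(Y) |D^\fs(Y)|_F^{-1/2}$. This requires comparing the Weyl integration formulas on $\fr$ and on $\fs$ along matched Cartan subspaces $\fc\subset\fr\subset\fs$ and checking the compatibility of the normalizing factors $|D^\fr|$ and $|D^\fs|$ for elements of $\fc$, together with keeping track of the Jacobians relating Haar measures on $H_X\backslash H$ and $H_X\backslash M$; these computations are entirely parallel to those in \cite[\S6]{zh}, so I expect them to proceed without surprises in the present generality.
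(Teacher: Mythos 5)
Your proposal is correct and is essentially the intended argument: the paper states this proposition without proof, deferring to the verbatim generalization of \cite{zh}, and the proof there is exactly your combination of parabolic descent of orbital integrals compatible with the Fourier transform, representability applied to the lower-rank pair $(M,\fr)$, a Weyl-integration comparison between $\fr$ and $\fs$ along matched Cartan subspaces, and uniqueness (plus local constancy) of the representing density. The one imprecision is your justification of $(\wh{f})^{(\fr)}=\wh{f^{(\fr)}}$: the constant term $f^{(\fr)}$ is not a bare integral over the orthogonal complement of $\fr$ in $\fs$ but involves a $K$-average over the nilradical-type directions, so the correct statement is the standard Harish-Chandra compatibility in which the opposite parabolic appears; this is harmless here because orbital integrals at regular elements of $\fr$ do not depend on the choice of parabolic with the given Levi, so your chain of equalities $\wh{I}(X,f)=\wh{I}^\fr(X,f^{(\fr)})$ stands as used in \cite[\S6]{zh}.
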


\paragraph{Limit formula} We also write $\wh{i}(X,Y)$ for $\wh{i}_X(Y)$.
There is a limit formula for $\wh{i}(X,Y)$ shown in
\cite[Proposition 7.1]{zh}, which takes care of a situation where
$d=1$ (and where there is an additional quadratic character present
to deal with the more general, twisted, periods considered there). A
similar formula still holds for the case at hand and will be stated
below. Notice that changing the Haar measures on $H$ and $H_X$
multiplies $\wh{i}(X,Y)$ by a nonzero scalar. We do not specify the
Haar measures, and instead refer the reader to \cite{zh} for more
details. Results that follow this limit formula (Proposition
\ref{prop. i(X,Y)} below) do not depend on the choices of the
measures.

Let $\fc$ be a Cartan subspace of $\fs$,  $\FT$ the centralizer of
$\fc$ in $\FH$, and $\ft$ the Lie algebra of $\FT$. For
$X,Y\in\fc_\reg(F)$, we define a bilinear form $q_{X,Y}$ on
$\fh(F)/\ft(F)$ by
$$q_{X,Y}(Z,Z')=\pair{[Z,X],[Y,Z']},$$ where the pairing
$\pair{\cdot,\cdot}$ is the one introduced before. One can check
that $q_{X,Y}$ is nondegenerate and symmetric. One can also verify
that $q_{X,Y}=q_{Y,X}$. We will write
$\gamma_\psi(X,Y)=\gamma_\psi(q_{X,Y})$ for simplicity.

\begin{prop}\label{prop. i(X,Y)}
Let $X\in\fs_\rs(F)$ and $Y\in\fc_\reg(F)$. Then there exists
$N\in\BN$ such that if $\mu\in F^\times$ satisfies $v_F(\mu)<-N$, we
have the equality
$$\wh{i}(\mu X,Y)=\sum_{h\in T\bs H,\ h\cdot X\in\fc}\gamma_\psi
\left(\mu h\cdot X,Y\right)\psi\left(\pair{\mu h\cdot X,Y}\right).$$
\end{prop}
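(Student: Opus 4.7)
The plan is to establish this limit formula via the $p$-adic stationary phase method, closely following the strategy of \cite[Proposition 7.1]{zh}, which treats the analogous statement when $d=1$. The key analytic inputs---parabolic descent of orbital integrals, Howe finiteness, bounds on normalized orbital integrals along Cartan subspaces, and local integrability of $\wh{i}_X$---have all been extended to our setting in Theorem \ref{thm. representability} and Proposition \ref{prop. inequality for nilpotent}, so the new work is essentially just identifying the stationary-phase contributions correctly.

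First, I would test the asserted identity against a function $f \in \CC_c^\infty(\fs(F))$ concentrated in a small neighborhood of $Y$. By definition of $\wh{i}(\mu X, \cdot)$,
$$\int_{\fs_\rs(F)} \wh{i}(\mu X, Z) f(Z) |D^\fs(Z)|_F^{-1/2} \d Z = \wh{I}(\mu X, f) = |D^\fs(\mu X)|_F^{1/2} \int_{H_X \bs H} \int_{\fs(F)} f(Z)\, \psi(\pair{\mu h^{-1} X h, Z}) \d Z \d h.$$
Writing $Z = Y + W$ with $W$ small, this becomes an oscillatory integral in $(h, W)$ with phase $(h, W) \mapsto \pair{\mu h^{-1} X h, Y + W}$.

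Next, I would analyze the critical set of this phase. Differentiation shows that the critical points in $h$ are those for which $[h^{-1} X h, Y] = 0$; since $Y$ is regular semisimple with centralizer exactly $\fc$, this forces $h \cdot X \in \fc$, and since $X$ is also regular semisimple, the set of such $h \in T \bs H$ is finite. Near each such critical point $h_0$, I would parameterize $h = h_0 \exp(Z)$ with $Z \in \fh(F) / \ft(F)$ and expand the phase to second order in $Z$; the Hessian form is exactly $\mu \cdot q_{h_0 \cdot X, Y}$ on $\fh(F)/\ft(F)$, where $q_{X,Y}$ is the nondegenerate symmetric form introduced just above the statement. Applying the $p$-adic Gaussian/Weil stationary phase evaluation, each critical point contributes the product of $\psi(\pair{\mu h_0 \cdot X, Y})$ (the value of the phase at the critical point) with the Weil index $\gamma_\psi(\mu h_0 \cdot X, Y)$ (from Gaussian integration of the second-order part). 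Summing over the finite set of critical points produces the right-hand side of the proposition.

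The main obstacle will be controlling the non-stationary contributions and matching measure normalizations so that the Gaussian evaluation produces exactly the Weil index $\gamma_\psi(\mu h \cdot X, Y)$ without spurious scalar factors. As in \cite{zh}, this forces one to separate the elliptic and non-elliptic cases: when $X$ is not elliptic, one first applies Proposition \ref{prop. i(X,Y) parabolic} to replace $\wh{i}(\mu X, \cdot)$ on $\fs$ by its parabolic-descent expression in a smaller Levi ``Lie algebra'' $\fr$, where the limit can be established by induction; when $X$ is elliptic, the non-stationary region is handled by integrating the derivative of $\psi(\pair{\mu h^{-1}Xh, Z})$ against $f$ to transfer the oscillation back onto $f$, which shows that for $v_F(\mu) < -N$ (with $N$ depending on the supports of $f$ and on $X, Y$) the non-critical contribution vanishes identically. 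These two pieces, combined with Theorem \ref{thm. representability}, yield the claimed formula verbatim as in \cite[\S 7]{zh}, with only the notational change that $\gl_n(F)$ is replaced by $\gl_m(D)$ throughout.
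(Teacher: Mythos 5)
Your proposal is correct and follows essentially the same route as the paper, whose entire proof is the remark that the argument of \cite[Proposition 7.1]{zh} (a Waldspurger-style localization/stationary-phase computation) carries over with obvious modifications; your outline — localizing $\wh{I}(\mu X,f)$ at the critical points $h\cdot X\in\fc$, identifying the Hessian with $\mu\,q_{h\cdot X,Y}$ on $\fh(F)/\ft(F)$, and reading off $\gamma_\psi(\mu h\cdot X,Y)\psi(\pair{\mu h\cdot X,Y})$ from the Gaussian evaluation — is exactly that argument, with the remaining work being the measure and non-stationary bookkeeping you already flag.
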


\begin{proof}
One can make an obvious modification of the proof of
\cite[Proposition 7.1]{zh} to apply it here.
\end{proof}

\paragraph{Construction of test functions} For $X,Y\in\fc_\reg(F)$,
there is a formula for $\gamma_\psi(X,Y)$, which is exhibited in
\cite[Proposition 7.3]{zh} when $d=1$. The formula for general $d$
has the same form. We will not state it here, since it involves much
more notation. The following lemma is used to construct certain test
functions required in Proposition \ref{prop. local prop} below.

\begin{prop}\label{prop. compare lemma}
Let $\fc$ be a Cartan subspace of $\fs$. Fix a Cartan subspace
$\fc'$ of $\fs'$ such that $\fc\leftrightarrow\fc'$. Then for any
$X,X'\in\fc_\reg(F)$ we have the equality
$$\gamma_\psi\left(X,X'\right)=\gamma_\psi(\fh(F))
\gamma_\psi(\fh'(F))^{-1}\gamma_\psi(\varphi_\fc(X),\varphi_\fc(X')).$$
Here $\varphi_\fc$ is an isomorphism from $\fc$ to $\fc'$ as in
Remark \ref{rem. matching cartan}.
\end{prop}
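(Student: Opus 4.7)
The plan is to derive a closed-form expression for $\gamma_\psi(X,X')$ that separates cleanly into (a) a factor depending only on the ambient Lie algebra $\fh$, and (b) a factor depending only on the characteristic polynomials of $X$ and $X'$. Since $X \leftrightarrow \varphi_\fc(X)$ and $X' \leftrightarrow \varphi_\fc(X')$ have matching characteristic polynomials, the second factor agrees on the two sides, and only the ratio $\gamma_\psi(\fh(F))\gamma_\psi(\fh'(F))^{-1}$ survives. Throughout I would follow the strategy of \cite[Proposition 7.3]{zh}, which handles the case $d=1$ (with an additional quadratic twist) by exactly this kind of decomposition.

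First I would use $H$-conjugation to normalize $X = X(A)$ and $X' = X(A')$ with $A, A' \in \fg\fl_m(D)$ commuting and $A$ regular, so that $\fc$ is identified with $\{X(C) : C \in \fg\fl_m(D)_A\}$ and $\ft$ with a diagonal copy of $\fg\fl_m(D)_A$ inside $\fh = \fg\fl_m(D) \oplus \fg\fl_m(D)$. A direct bracket calculation then expresses the bilinear form $q_{X,X'}$ on $\fh(F)/\ft(F)$ as a combination of reduced trace pairings on $\fg\fl_m(D)$ twisted by left and right multiplication by $A$ and $A'$. Decomposing $\fg\fl_m(D)$ as an $F[A]$-bimodule according to the primary decomposition $F[A] \simeq \prod_i L_i$, where the $L_i$ correspond to the irreducible factors $p_i(t)$ of the characteristic polynomial of $A$, and using $D \otimes_F L_i \simeq \M_{c_i}(D_i)$, the form $q_{X,X'}$ breaks into a direct sum indexed by pairs $(i,j)$, with each summand determined by $L_i$, $L_j$, the eigenvalues of $A$ and $A'$, and the reduced trace of $\M_{c_i}(D_i)$.

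Carrying out the parallel decomposition on the split side with $B = \varphi(A)$ and $B' = \varphi(A')$ gives a block sum indexed by the same pairs $(i,j)$, with each summand now governed by the usual trace form on $\fg\fl_{d_i c_i}(L_i) = \M_{c_i}(D_i) \otimes_{L_i} L_i^\sep$ viewed over $L_i$. Because the trace form of a central simple $L_i$-algebra and the reduced trace form differ only by a factor depending on the algebra itself (and not on $X, X'$), the multiplicativity of the Weil index forces the ratio $\gamma_\psi(X,X')/\gamma_\psi(\varphi_\fc(X),\varphi_\fc(X'))$ to be a constant independent of $X, X'$. Specializing to a convenient base point and comparing with the Weil indices of the ambient trace forms on $\fh(F)$ and $\fh'(F)$ identifies this constant as $\gamma_\psi(\fh(F))\gamma_\psi(\fh'(F))^{-1}$.

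The main obstacle is the bookkeeping in the block decomposition: one must verify that the factor attached to each block $(i,j)$ over the extension $L_i$ (respectively $L_i \cdot L_j$) agrees on the division-algebra side and on the split side, up to exactly the discrepancy between the reduced trace pairing on $\M_{c_i}(D_i)$ and the ordinary trace on its split form. This is the place where the factor $\gamma_\psi(\fh(F))\gamma_\psi(\fh'(F))^{-1}$ enters, and it should follow from the same local computations as in \cite[Proposition 7.3]{zh}, applied to the reduced trace forms on each of the factors $\M_{c_i}(D_i)$.
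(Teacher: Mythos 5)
Your overall strategy is the intended one: the paper itself gives no argument for this proposition beyond asserting that the explicit formula of \cite[Proposition 7.3]{zh} for $\gamma_\psi(X,Y)$ persists for general $d$ "in the same form'', and your plan (compute $q_{X,X'}$ explicitly, decompose by the primary decomposition of $F[A]$, compare each block with its split twin, and let the Weil-index multiplicativity isolate the constant $\gamma_\psi(\fh(F))\gamma_\psi(\fh'(F))^{-1}$) is exactly the kind of computation that reference carries out. So in spirit you are reproducing the paper's (outsourced) proof rather than finding a new route.

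However, the one concrete step you actually set up is wrong as written. You cannot normalize both $X$ and $X'$ to the shape $X(A)$, $X(A')$: a direct computation gives $[X(A),X(A')]=\mathrm{diag}(A'-A,\,A-A')$, so two such elements commute only when $A=A'$, and consequently $\{X(C): C\in\fg\fl_m(D)_A\}$ is \emph{not} the Cartan subspace through $X(A)$. By Remark \ref{rem. matching cartan} the correct parametrization is
$$\fc(F)=\fs_{X}(F)=\left\{\begin{pmatrix}0&C\\AC&0\end{pmatrix};\ C\in\fg\fl_m(D),\ AC=CA\right\},$$
so the second element must be taken as $X'=\begin{pmatrix}0&C\\AC&0\end{pmatrix}$ with $C\in\fg\fl_m(D)_A$; note also that $q_{X,X'}$ only descends to $\fh(F)/\ft(F)$ when both arguments lie in the same Cartan subspace, so with your normalization the form you would be computing is not the one in the proposition. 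After this correction the bracket computation produces trace pairings twisted by $A$ and $C$ (not by two commuting ``eigenvalue matrices''), and the block structure indexed by the factors of $F[A]\simeq\prod_i L_i$ goes through as you describe.

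The second point to flag is that the crux — "the trace form of a central simple $L_i$-algebra and the reduced trace form differ only by a factor depending on the algebra itself, hence the ratio of Weil indices is independent of $X,X'$'' — is asserted rather than proved, and it is not a formal consequence of multiplicativity: the forms on each block are scalings/twistings of (reduced) trace forms by data extracted from $X,X'$, and $\gamma_\psi(\lambda q)/\gamma_\psi(\lambda q')$ is independent of $\lambda$ only after one checks that $q$ and $q'$ have the same rank and discriminant (the Hasse invariants may then differ by a constant depending on the algebra). Verifying this for the reduced trace form of $\M_{c_i}(D_i)$ against its split form, and then checking that the block-by-block discrepancies multiply out to exactly $\gamma_\psi(\fh(F))\gamma_\psi(\fh'(F))^{-1}$ (rather than identifying the constant by an unspecified ``base point''), is precisely the content of the general-$d$ analogue of \cite[Proposition 7.3]{zh}; since the paper also defers to that computation, your proposal is acceptable in outline, but these verifications are where the actual work lies and should not be waved through.
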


The following proposition is an analogue of \cite[Proposition
7.6]{zh}, and its proof involves Propositions \ref{prop. i(X,Y)} and
\ref{prop. compare lemma}. It plays an important role in proving the
existence smooth transfer using the global method that we are
following here.

\begin{prop}\label{prop. local prop}
Let $X_0\in\fc_\reg(F)$ and $Y_0\in\fc'_\reg(F)$ be such that
$X_0\leftrightarrow Y_0$. Then there exist functions
$f\in\CC_c^\infty(\fs(F))$ and $f'\in\CC_c^\infty(\fs'(F))$
satisfying the following conditions.
\begin{enumerate}
\item If $X\in\Supp(f)$, $X$ is $H$-conjugate to an element in $\fc_\reg(F)$.
If $Y\in\Supp(f')$, there exists $X'\in\fc_\reg(F)$ such that
$X'\leftrightarrow Y$.
\item $f'$ is a transfer of $f$.
\item There is an equality
$$\wh{I}(X_0,f)=c\wh{I}(Y_0,f')\neq0,$$
where $c=\gamma_\psi(\fh(F))\gamma_\psi(\fh'(F))^{-1}$.
\end{enumerate}
\end{prop}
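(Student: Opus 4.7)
The plan is to construct $f$ and $f'$ as Weyl-integration lifts of matching characteristic functions on the Cartan subspaces $\fc$ and $\fc'$, supported near large scalar multiples of $X_0$ and $Y_0$, and then to evaluate the Fourier-orbital integrals $\wh{I}(X_0,f)$ and $\wh{I}(Y_0,f')$ explicitly by combining Theorem \ref{thm. representability} with the limit formula of Proposition \ref{prop. i(X,Y)} and the Weil-index identity of Proposition \ref{prop. compare lemma}. This is the $p$-adic symmetric space analogue of the construction in \cite[Proposition 7.6]{zh}.

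First, fix $\mu\in F^\times$ with $v_F(\mu)$ sufficiently negative so that Proposition \ref{prop. i(X,Y)} applies to compute $\wh{i}(X_0,\cdot)$ and $\wh{i}(Y_0,\cdot)$ in small neighborhoods of $\mu X_0$ and $\mu Y_0$ respectively, via the symmetry $\wh{i}(X,Y)=\wh{i}(Y,X)$ of the representability function. Let $\omega\subset\fc_\reg(F)$ be a small open compact neighborhood of $\mu X_0$, and set $\omega':=\varphi_\fc(\omega)\subset\fc'_\reg(F)$, a neighborhood of $\mu Y_0$. Using the analogue of Weyl integration on the symmetric space (cf.\ \cite[\S6]{zh}), I would construct $f\in\CC_c^\infty(\fs(F))$ supported in $H\cdot\omega$ with normalized orbital integral $|D^\fs(Y)|_F^{1/2}O(Y,f)=1_\omega(Y)$ for $Y\in\fc_\reg(F)$ and $O(Y,f)=0$ on any Cartan subspace of $\fs$ not $H$-conjugate to $\fc$; build $f'\in\CC_c^\infty(\fs'(F))$ analogously on the primed side. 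Condition (1) is then automatic, and condition (2) follows from Remark \ref{rem. matching cartan} together with the equality $|D^\fs(X)|_F=|D^{\fs'}(\varphi_\fc(X))|_F$ for matching regular elements.

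For (3), Theorem \ref{thm. representability} and Weyl integration yield
\[
\wh{I}(X_0,f)\;=\;\frac{1}{|W(\fc)|}\int_{\fc_\reg(F)}\wh{i}(X_0,Y)\,1_\omega(Y)\,dY,
\]
and similarly on the primed side. The limit formula in Proposition \ref{prop. i(X,Y)}, combined with the symmetries of $\wh{i}$, $\gamma_\psi$ and the pairing $\pair{\cdot,\cdot}$, expresses $\wh{i}(X_0,Y)$ for $Y\in\omega$ as the finite sum $\sum_h\gamma_\psi(X_0,hY)\psi(\pair{X_0,hY})$ over the Weyl orbit of $(H,\fc)$, and analogously for $\wh{i}(Y_0,\varphi_\fc(Y))$. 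Proposition \ref{prop. compare lemma} says each Weil-index factor on the $\fs$ side equals $c=\gamma_\psi(\fh(F))\gamma_\psi(\fh'(F))^{-1}$ times its primed counterpart under $\varphi_\fc$, while the character $\psi(\pair{X_0,hY})$ is preserved by $\varphi_\fc$ because matching elements have equal characteristic polynomials, hence equal reduced-trace pairings. With Haar measures on $\fc$ and $\fc'$ chosen compatibly under $\varphi_\fc$, this gives $\wh{I}(X_0,f)=c\,\wh{I}(Y_0,f')$.

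The hard part is the non-vanishing. A priori, the finite sum of oscillating terms exhibited above could cancel upon integration over $\omega$. To rule this out I would further shrink $\omega$ to a coset of $\varpi^N\fc(\CO_F)$ centered at $\mu X_0$, for $N$ large enough that each $\gamma_\psi(X_0,hY)$ and $\psi(\pair{X_0,hY})$ is constant on $\omega$. Then $\wh{I}(X_0,f)=\vol(\omega)\cdot\Sigma(\mu)$, where $\Sigma(\mu)$ is an explicit finite sum of the form $\sum_h c_h\,\psi(\mu\pair{X_0,hX_0})$ with nonzero scalars $c_h$. Since $X_0\in\fc_\reg$ is regular semisimple, the pairings $\pair{X_0,hX_0}$ are distinct for distinct Weyl classes of $h$, so $\Sigma$ is a nontrivial linear combination of distinct additive characters of $F$ in the variable $\mu$; a generic choice of $\mu$ (still of sufficiently negative valuation) makes $\Sigma(\mu)\ne 0$, completing the plan.
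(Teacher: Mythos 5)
Your overall strategy is the intended one: the paper's proof is simply a reference to \cite[Proposition 7.6]{zh}, and that argument does proceed as you outline, by prescribing matching orbital integrals supported on the $H$- (resp.\ $H'$-) orbit of a neighborhood of a large multiple $\mu X_0$ (resp.\ $\mu Y_0$) inside the matching Cartan subspaces, and then evaluating $\wh{I}(X_0,f)$, $\wh{I}(Y_0,f')$ through Theorem \ref{thm. representability}, the Weyl integration formula, Proposition \ref{prop. i(X,Y)} and Proposition \ref{prop. compare lemma}. Two points in your write-up are not justified, and the second is a genuine gap. First, you invoke the symmetry $\wh{i}(X,Y)=\wh{i}(Y,X)$ to move the scaling from the first variable of the limit formula to the second. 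That symmetry is a deep theorem even in the classical Lie-algebra setting and is nowhere available here; what you actually need is only the elementary homogeneity $\wh{i}(\mu X,Y)=\wh{i}(X,\mu Y)$, which follows directly from the behaviour of orbital integrals, of $|D^\fs|_F$, and of the Fourier transform under $Z\mapsto\mu Z$ (equivalently, one may prove the statement for $(\mu X_0,\mu Y_0)$ and then rescale $f,f'$). As written, though, the step rests on an unproved assertion.

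The real problem is the non-vanishing of $\wh{I}(X_0,f)$. After shrinking $\omega$ so that all terms are constant, you get $\Sigma(\mu)=\sum_h c_h\,\psi(\mu\pair{hX_0,X_0})$ and claim the exponents $\pair{hX_0,X_0}$ are pairwise distinct because $X_0$ is regular. This is false in general: since the trace form is $H$-invariant and symmetric, $\pair{h^{-1}X_0,X_0}=\pair{hX_0,X_0}$, so distinct classes $h\neq h^{-1}$ always share an exponent, and further coincidences can occur. Linear independence of characters then only tells you that $\Sigma(\mu)\neq 0$ for some $\mu$ \emph{provided} some grouped coefficient $\sum_{h:\,\pair{hX_0,X_0}=t}c_h$ is nonzero; the $c_h$ are Weil indices (roots of unity, depending moreover on the square class of $\mu$), and nothing in your argument excludes their cancellation. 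The standard repair, and what the cited proof does in effect, is not to take $1_\omega$ but to build the conjugate of the $h=1$ term into the prescribed orbital integral: on a ball of \emph{fixed} radius around $\mu X_0$ in $\fc_\reg(F)$ set $I(Z,f)=\overline{\gamma_\psi(Z,X_0)\psi(\pair{Z,X_0})}$ (and transport this by $\varphi_\fc$ to the primed side). Then the identity term contributes $\vol(\omega)\neq0$, while for $h\neq 1$ the integrand contains the phase $\psi\bigl(\mu\pair{hX_0-X_0,\,\cdot\,}\bigr)$, a nontrivial character of the fixed ball once $v_F(\mu)$ is sufficiently negative (the trace form is nondegenerate on $\fc$), so those terms vanish outright and no cancellation argument is needed. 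With that modification, and with the homogeneity identity in place of the symmetry, your construction does yield conditions (1)--(3) with $c=\gamma_\psi(\fh(F))\gamma_\psi(\fh'(F))^{-1}$.
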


\begin{proof}
The same proof as that of \cite[Proposition 7.6]{zh} applies.
\end{proof}

\subsection{Proof of Theorem \ref{thm. fourier}}

In this subsection, we fix two $\CC_c^\infty$-functions
$f'\in\CC_c^\infty(\fs'(F))$ and $f\in\CC_c^\infty(\fs(F))$ such
that $f\leftrightarrow f'$.  The proof of Theorem \ref{thm. fourier}
can be divided into two parts:
\begin{enumerate}
\item the first part is to prove that $\wh{I}(Y,f')=0$ for any
$Y\in\fs'_\rs(F)$ such that there exists no element in $\fs_\rs(F)$
matching $Y$;
\item the second part is to search for a nonzero constant $c\in\BC$,
independent of $f$ and $f'$, such that $$\wh{I}(Y,f')=c\wh{I}(X,f)$$
for any $X\in\fs_\rs(F),Y\in\fs'_\rs(F)$ such that $X\leftrightarrow
Y$.
\end{enumerate}

\paragraph{First part of the proof}
Now we fix a $Y_0\in\fs'_\rs(F)$ such that there exists no element
in $\fs_\rs(F)$ matching $Y_0$. Suppose that $Y_0$ belongs to a
Cartan subspace $\fc'_0$ of $\fs'$. By Theorem \ref{thm.
representability} (in the case where $d=1$) and the Weyl integration
formula, we have
\begin{equation}\label{equation. weyl}\begin{aligned}
\wh{I}(Y_0,f')&=\int_{\fs'_\rs(F)}\wh{i}_{Y_0}(Z)f'(Z)
|D^{\fs'}(Z)|_F^{-\frac{1}{2}}\ \d Z\\
&=\sum_{\fc'} \frac{1}{w_{\fc'}}\int_{\fc'_\reg(F)}\wh{i}_{Y_0}(Z)
I(Z,f')\ \d Z,
\end{aligned}\end{equation}
where $\fc'$ runs over a set of representatives for the finitely
many $H'$-conjugacy classes of Cartan subspaces in $\fs'$ and
$w_{\fc'}$ is the cardinality of the relative Weyl group associated
to $\fc'$. For the Weyl integration formula in the setting of
symmetric spaces, we refer the reader to \cite[page 106]{rr}.

We denote by $\sC^D$ the set of Cartan subspaces $\fc'$ of $\fs'$
such that there exists a Cartan subspace $\fc$ of $\fs$ with
$\fc\leftrightarrow\fc'$. By the condition on $Y_0$, we see that
$\fc'_0\notin\sC^D$.

For any $\fc'\notin\sC^D$, we automatically have $I(Z,f')=0$ for
each $Z\in\fc'_\reg(F)$ by the condition on $f'$. If $\fc'\in\sC^D$,
we claim that $\wh{i}_{Y_0}(Z)=0$ for any $Z\in\fc'_\reg(F)$. We can
assume that $Y_0$ is of the form $\begin{pmatrix}0&{\bf 1}_n\\ A&0
\end{pmatrix}$ with $A\in\GL_n(F)_\rs$. By the condition on $Y_0$,
there exists an irreducible factor (over $F$) of the characteristic
polynomial of $A$ with degree $r$ such that $d\nmid r$. Then there
exists a subspace $\fr$ of $\fs$ of the form
$\left(\fg\fl_{r}\oplus\fg\fl_{n-r}\right)\bigoplus
\left(\fg\fl_{r}\oplus\fg\fl_{n-r}\right)$ such that $Y_0\in\fr(F)$
(see Proposition \ref{prop. i(X,Y) parabolic} for the notation).
Since $\fc'\in\sC^D$, there exists no element in $\fr(F)$ which is
$H'$-conjugate to any $Z\in\fc'_\reg(F)$. Thus the claim follows
from Proposition \ref{prop. i(X,Y) parabolic}. Therefore, in any
case, we have showed that the terms appearing in the sum of
(\ref{equation. weyl}) are zero, thus obtaining that
$\wh{I}(Y_0,f')=0$.

\paragraph{Second part of the proof}
The arguments in this part are almost the same as those in
\cite[\S8]{zh}. We shall explain them briefly.

Now, we fix $f\in\CC_c^\infty(\fs(F))$, and
$f'\in\CC_c^\infty(\fs'(F))$ which is a transfer of $f$, and fix
$X_0\in\fs_\rs(F),Y_0\in\fs'_\rs(F)$ such that $X_0\leftrightarrow
Y_0$. Next, we choose some global data as follows.

\s{$\bullet$ \emph{Fields}}. We choose a number field $k$ and a
central division algebra $\BD$ over $k$ so that:
\begin{enumerate}
\item $k$ is totally imaginary;
\item there exists a finite place $w$ of $k$ such that $k_w\simeq F$
and $\BD(k_w)\simeq D$;
\item there exists another finite place $u$ of $k$ such that $\BD$
does not split over $k_u$. By conditions 1 and 2, such a finite
place $u$ exists.
\end{enumerate}
Such a number field $k$ and a division algebra $\BD$ do exist. See
\cite[Proposition in \S 11.1]{wa97}. From now on, we identify $k_w$
with $F$. We denote by $\CO_k$ the ring of integers of $k$, and by
$\BA$ the ring of adeles of $k$. We fix a maximal order $\CO_\BD$ of
$\BD$ containing $\CO_k$. We fix a continuous character on $\BA/k$
whose local component at $w$ is $\psi$, and henceforth use the
letter $\psi$ to denote this new (global) character.

\s{$\bullet$ \emph{Groups}}. We define a global symmetric pair
$(\BG,\BH)$ over $k$ with respect to $\BD$, so that the base change
of $(\BG,\BH)$ to $k_w$ is isomorphic to $(\FG,\FH)$. Thus if the
index of $\BD$ is $d'$, let
$(\BG,\BH)=(\GL_{2m'}(\BD),\GL_{m'}(\BD)\times\GL_{m'}(\BD))$ where
$m'd'=n$. Define the symmetric pair
$(\BG',\BH')=(\GL_{2n},\GL_n\times\GL_n)$ over $k$ as usual. We now
use $\fh$ (resp. $\fh'$) to denote the Lie algebra of $\BH$ (resp.
$\BH'$), and $\fs$ (resp. $\fs'$) to denote the ``Lie algebra'' of
$\BG/\BH$ (resp. $\BG'/\BH'$). Hence $X_0\in\fs_\rs(k_w)$ and
$Y_0\in\fs'_\rs(k_w)$.

\s{$\bullet$ \emph{Places}}. Denote by $V$ (resp. $V_\infty,\ V_\f$)
the set of all (resp. archimedean, non-archimedean) places of $k$.
Fix two $\CO_k$-lattices:
$\FL=\fg\fl_{m'}(\CO_\BD)\oplus\fg\fl_{m'}(\CO_\BD)$ in $\fs(k)$ and
$\FL'=\fg\fl_n(\CO_{k})\oplus\fg\fl_n(\CO_k)$ in $\fs'(k)$. For each
$v\in V_\f$, set $\FL_v=\FL\otimes_{\CO_k}\CO_{k,v}$ and
$\FL'_v=\FL'\otimes_{\CO_k}\CO_{k,v}$. We fix a finite set $S\subset
V$ such that:
\begin{enumerate}
\item $S$ contains $u,w$ and $V_\infty$;
\item for each $v\in V-S$, everything is unramified, i.e. $\BG$ and
$\BG'$ are unramified over $k_v$, and $\FL_v$ and $\FL'_v$ are
self-dual with respect to $\psi_v$ and $\pair{\ ,\ }$.
\end{enumerate}
We denote by $S'$ the subset $S-V_\infty-\{w\}$ of $S$.

\s{$\bullet$ \emph{Orbits}}. For each $v\in V_\f$, we choose an open
compact subset $\Omega_v$ of $\fs(k_v)$ such that:
\begin{enumerate}
\item if $v=w$, we require that: $X_0\in\Omega_w$ and
$\Omega_w\subset\fs_\rs(k_w)$, $\wh{I}(\cdot,f)$ is constant on
$\Omega_w$, and $\wh{I}(\cdot,f')$ is constant and hence equal to
$\wh{I}(Y_0,f')$ on the set of $Y\in\fs'_\rs(k_w)$ which matches an
element $X$ in $\Omega_w$;
\item if $v=u$, we require $\Omega_u\subset\fs_\el(k_u)$;
\item if $v\in S$ but $v\neq w,u$, choose $\Omega_v$
to be any open compact subset of $\fs(k_v)$;
\item if $v\in V_\f-S$, let $\Omega_v=\FL_v$.
\end{enumerate}
Recall that a semisimple regular element $X\in\fs(k)$ is called
elliptic if its centralizer $\BH_X$ is an elliptic torus. Denote by
$\fs_\el(k)$ (resp. $\fs'_\el(k)$) the set of elliptic regular
semisimple elements in $\fs(k)$ (resp. $\fs'(k)$). Then by the
strong approximation theorem, there exists
$X^0\in\fs(k)\subset\fs(\BA)$ such that for each $v\in V_\f$ we have
$X^0\in\Omega_v$. Furthermore, by the condition (2) on the
$\Omega_v$'s, $X^0\in\fs_\el(k)$. Take an element
$Y^0\in\fs'_\el(k)$ such that $X^0\leftrightarrow Y^0$.

\s{$\bullet$ \emph{Functions}}. For each $v\in V$, we choose
Bruhat-Schwartz functions $\phi_v\in\CS(\fs(k_v))$ and
$\phi'_v\in\CS(\fs'(k_v))$ in the following way:
\begin{enumerate}
\item if $v=w$, let $\phi_v=f$ and $\phi'_v=f'$;
\item if $v\in S'$, by Proposition \ref{prop. local prop}, we can
require that:
\begin{itemize}
\item if $X_v\in\Supp(\phi_v)$, there exists $X'_v\in\fc_{X^0}(k_v)$
such that $X_v$ and $X'_v$ are $\BH(k_v)$-conjugate, where
$\fc_{X^0}$ is the Cartan subspace of $\fs$ containing $X^0$;
\item if $Y_v\in\Supp(\phi'_v)$, there exists $X_v\in\fc_{X^0}(k_v)$
such that $X_v\leftrightarrow Y_v$;
\item $\phi'_v$ is a transfer of $\phi_v$;
\item $\wh{I}(X^0,\phi_v)=c_v\wh{I}(Y^0,\phi'_v)$, where
$c_v=\gamma_\psi(\fh(k_v))\gamma_\psi(\fh'(k_v))^{-1}$;
\end{itemize}
\item
for $v\in V-S$, since we required $\BG$ to be unramified over $k_v$,
that is to say, $\BD$ to be split over $k_v$, we can make suitable
identifications  $\BG(k_v)=\BG'(k_v)$, $\FL_v=\FL'_v$, and set
$\phi_v=\phi_v'={\bf1}_{\FL_v}$; moreover, since $\FL_v$ is
self-dual with respect to $\psi_v$ and $\pair{\ ,\ }$,
$\phi_v=\wh{\phi}_v$;
\item for $v_0\in V_\infty$, identifying $(\BH(k_{v_0}),\fs(k_{v_0}))$ with
$(\BH'(k_{v_0}),\fs'(k_{v_0}))$, we can choose
$\phi_{v_0}=\phi'_{v_0}\in\CS(\fs(k_{v_0}))$ such that:
\begin{itemize}
\item $\wh{I}(X^0,\phi_{v_0})=\wh{I}(Y^0,\phi'_{v_0})\neq0$;
\item if $X\in\fs(k)$ is $\BH(k_v)$-conjugate to an element in
the support of $\wh{\phi_v}$ at each place $v\in V$, then $X$ is
$\BH(k)$-conjugate to $X^0$;
\item if $Y\in\fs'(k)$ is $\BH'(k_v)$-conjugate to an element in
the support of $\wh{\phi'_v}$ at each place $v\in V$, then $Y$ is
$\BH'(k)$-conjugate to $Y^0$.
\end{itemize}
\end{enumerate}
The condition 4 can be satisfied, and was discussed in \cite[Lemma
in \S10.7]{wa97} in the endoscopic case. The key point is that we
have a morphism $\fs/\BH\ra\FA_k^\ell$ where
$\FA_k^\ell=\Spec\left(\CO(\fs)^\BH\right)$ is an affine space. Then
the discussion is the same as in \cite[Lemma in \S10.7]{wa97}.

Now we set $\phi\in\CS(\fs(\BA))$ and $\phi'\in\CS(\fs'(\BA))$ to
be:
$$\phi=\prod_{v\in V}\phi_v,\quad \phi'=\prod_{v\in V}\phi'_v.$$

\s{$\bullet$ \emph{The end of the proof}}. As shown in \cite[Theorem
8.2]{zh} the following integrals $I(\phi)$ and $I(\phi')$ are
absolutely convergent:
$$I(\phi)=\int_{\BH(k)\bs\BH(\BA)^1}\sum_{X\in\fs_\el(k)}\phi(X^h)\ \d h,\quad
I(\phi')=\int_{\BH'(k)\bs\BH'(\BA)^1}\sum_{Y\in\fs'_\el(k)}\phi'(Y^h)\
\d h,$$ where
$$\BH(\BA)^1=\bigcap_{\chi\in\Hom_k(\BH,\BG_m)}\ker|\chi|,
\quad \BH'(\BA)^1=\bigcap_{\chi\in\Hom_k(\BH',\BG_m)}\ker|\chi|.$$
Here $\abs{\chi}$ is the function on $\BH(\BA)$ or $\BH'(\BA)$
defined in the usual way. Actually \cite[Theorem 8.2]{zh} only
treats the case of $(\BG',\BH')$, but the arguments also work for
$(\BG,\BH)$. It is obvious that
$$I(\phi)=\sum_{X\in[\fs_\el(k)]}\tau(X)\prod_v I(X,\phi_v),$$
and $$I(\phi')=\sum_{Y\in[\fs'_\el(k)]}\tau(Y)\prod_v
I(Y,\phi'_v),$$ where $[\fs_\el(k)]$ denotes the set of
$\BH(k)$-orbits in $\fs_\el(k)$, $$\tau(X)=\vol(\BH_X(k)\bs
(\BH_X(\BA)\cap\BH(\BA)^1)),$$ and the definitions of
$[\fs'_\el(k)]$ and $\tau(Y)$ are similar. If $X\in\fs_\el(k)$ and
$Y\in\fs'_\el(k)$ are such that $X\leftrightarrow Y$, then
$\BH_X\simeq\BH'_Y$ (the justification is the same as in the local
field case). We choose Haar measures on $\BH_X(\BA)$ and
$\BH'_Y(\BA)$ so that they are compatible. In particular,
$$\tau(X)=\tau(Y).$$

According to the conditions on $\phi_u$ (resp. $\phi'_u$), we know
that if $X\in\fs(k)$ (resp. $Y\in\fs'(k)$) is such that
$X\in\Supp(\phi)^{\BH(\BA)}$  (resp.
$Y\in\Supp(\phi')^{\BH'(\BA)}$), then $X\in\fs_\el(k)$ (resp.
$Y\in\fs'_\el(k)$). Here we use $\Supp(\phi)^{\BH(\BA)}$ to denote
the union of all $\BH(\BA)$-orbits intersecting $\Supp(\phi)$. We
have a similar defined set $\Supp(\phi')^{\BH'(\BA)}$. By the
conditions on $\phi'_v$ at each place $v$, we know that, if
$Y\in\fs'_\el(k)$ is such that $I(Y,\phi'_v)\neq0$ for each $v\in
V$, then there exists $X_v\in\fs_\rs(k_v)$ such that
$X_v\leftrightarrow Y$ at each place $v\in V$ and hence there exists
$X\in\fs_\el(k)$ such that $X\leftrightarrow Y$. Therefore we have
$$I(\phi)=I(\phi'),$$ since $\phi_v$ is a transfer of $\phi'_v$
at each place $v\in V$ by the requirements we have imposed.

On the other hand, according to the conditions on $\wh{\phi_v}$ and
$\wh{\phi'_v}$, we know that if $X\in\fs(k)$ (resp. $Y\in\fs'(k)$)
is such that $X\in\Supp(\wh{\phi})^{\BH(\BA)}$ (resp.
$Y\in\Supp(\wh{\phi'})^{\BH'(\BA)}$), $X$ is $\BH(k)$-conjugate to
$X^0$ (resp. $Y$ is $\BH'(k)$-conjugate to $Y^0$).

By the Poisson summation formula, we have
$$\sum_{X\in\fs(k)}\phi(X^h)=\sum_{X\in\fs(k)}\wh{\phi}(X^h),\quad
\forall\ h\in\BH(\BA),$$ and
$$\sum_{Y\in\fs'(k)}\phi'(Y^h)=\sum_{Y\in\fs'(k)}\wh{\phi'}(Y^h),\quad
\forall\ h\in\BH'(\BA).$$ Actually, by the conditions on $\phi$ and
$\phi'$, we can replace $\fs(k)$ (resp. $\fs'(k)$) by $\fs_\el(k)$
(resp. $\fs'_\el(k)$) on both sides of the above two equations.
Thus, we have
$$I(\phi)=I(\wh{\phi}),\quad I(\phi')=I(\wh{\phi'}).$$ Hence we have
$$I(\wh{\phi})=I(\wh{\phi'}),$$
which amounts to saying,
$$\tau(X^0)\prod_{v\in V}\wh{I}(X^0,\phi_v)
=\tau(Y^0)\prod_{v\in V}\wh{I}(Y^0,\phi'_v).$$ For $v\in V-S$ or
$v\in V_\infty$, we have
$$\wh{I}(X^0,\phi_v)=\wh{I}(Y^0,\phi'_v)\neq0.$$
For $v\in S'$ we have
$$\wh{I}(X^0,\phi_v)=c_v\wh{I}(Y^0,\phi'_v)\neq0.$$ Therefore
$$c\wh{I}(X^0,f)=\wh{I}(Y^0,f'),$$ where
$$c=\prod_{v\in S'}c_v=\prod_{v\in S'}\gamma_\psi(\fh(k_v))
\gamma_\psi(\fh'(k_v))^{-1}.$$ Notice that if $v\in V_\infty$ or
$v\in V-S$,
$$\gamma_\psi(\fh(k_v))=\gamma_\psi(\fh'(k_v))=1.$$ Also notice that
$$\prod_{v\in V}\gamma_\psi(\fh(k_v))=\prod_{v\in
V}\gamma_\psi(\fh'(k_v))=1.$$ Therefore
$$c=\gamma_\psi(\fh(k_w))^{-1}\gamma_\psi(\fh'(k_w)).$$ Since
$$\wh{I}(X_0,f)=\wh{I}(X^0,f),
\quad \wh{I}(Y_0,f')=\wh{I}(Y^0,f'),$$ we complete the proof of the
theorem.

\paragraph{Acknowledgements} This work was supported by the National Key
Basic Research Program of China (No. 2013CB834202). The author would
like to thank Dipendra Prasad for his valuable comments, and Wen-Wei
Li for his long list of useful comments and suggestions. He also
thanks Dihua Jiang and Binyong Sun for helpful discussions. He
expresses gratitude to Ye Tian and Linsheng Yin for their constant
encouragement and support. The anonymous referee pointed out a gap
and numerous mathematical and grammatical inaccuracies, made a lot
of useful comments, and helped the author to greatly improve the
exposition. The author is grateful to him or her.

\s{\small Chong Zhang\\
School of Mathematical Sciences, Beijing Normal University,\\
Beijing 100875, P. R. China.\\
E-mail address: \texttt{zhangchong@bnu.edu.cn}}

\end{document}